\documentclass[a4paper,10pt]{article}
\usepackage[latin1]{inputenc}
\usepackage[T1]{fontenc}
\usepackage{amsmath}
\usepackage{amsthm}
\usepackage{amsfonts}
\usepackage{amstext}
\usepackage{amsthm}
\usepackage[all]{xy}
\usepackage{geometry}
\usepackage{srcltx}
\usepackage{graphics}
\usepackage{epsfig}
\usepackage{subfigure}
\usepackage{slashed}
\usepackage{color}
\usepackage{amssymb}
\usepackage{srcltx}
\usepackage{mathrsfs}
\newtheorem{theorem}{Theorem}[section]
\newtheorem{lemma}[theorem]{Lemma}
\newtheorem{defi}[theorem]{Definition}

\newtheorem{prop}[theorem]{Proposition}
\newtheorem{cor}[theorem]{Corollary}
\newtheorem*{theorem*}{Theorem}

\DeclareMathOperator{\im}{im}

\DeclareMathOperator{\sfl}{sf}
\DeclareMathOperator{\diag}{diag}

\DeclareMathOperator{\sgn}{sgn}

\DeclareMathOperator{\diverg}{div}
\DeclareMathOperator{\gra}{graph}
\DeclareMathOperator{\codim}{codim}
\DeclareMathOperator{\FredL}{\mathcal{FL}}
\DeclareMathOperator{\Sp}{Sp}

\title{A Morse-Smale index theorem for indefinite elliptic systems and bifurcation}
\author{Alessandro Portaluri and Nils Waterstraat}

\begin{document}
\date{}
\maketitle

\footnotetext[1]{{\bf 2010 Mathematics Subject Classification: Primary 35J57; Secondary 35J61, 53D12, 58J30 }}
\footnotetext[2]{A. Portaluri was partially supported by PRIN 2009 ``Critical Point Theory and Perturbative Methods for Non-Linear Differential Equations'' and by the project ERC Advanced Grant 2013 No. 339958 ``Complex Patterns for Strongly Interacting Dynamical Systems - COMPAT''.}
\footnotetext[3]{N. Waterstraat was supported by the Berlin Mathematical School and the SFB 647 ``Space--Time--Matter''.}

\begin{abstract}
We generalise the semi-Riemannian Morse index theorem to non-degenerate elliptic systems of partial differential equations on star-shaped domains. Moreover, we apply our theorem to bifurcation from a branch of trivial solutions of semilinear systems, where the bifurcation parameter is introduced by shrinking the domain to a point. This extends recent results of the authors for scalar equations. 
\end{abstract}

\section{Introduction}
The Morse index theorem is a well known result in differential geometry which relates the Morse index of a non-degenerate geodesic $\gamma$ in a Riemannian manifold $(M,g)$ to its number of conjugate points (cf. \cite[\S 15]{MilnorMorse}). It was proved by Marston Morse in the first third of the 20th century (cf. \cite{MorseTrans}, \cite{Morse}) and since then it has been generalised into various directions. After introducing coordinates, the Morse index theorem can be viewed as an assertion about Dirichlet boundary value problems for systems of ordinary differential equations of the form

\begin{equation}\label{Morseequ}
\left\{
\begin{aligned}
-u''(x)+S(x)u(x)&=\lambda\,u(x),\quad x\in[0,1],\\
u(0)=u(1)&=0
\end{aligned}
\right.
\end{equation}
where $S:[0,1]\rightarrow\mathcal{S}(k;\mathbb{R})$ is a smooth path of symmetric matrices containing curvature terms of the manifold $M$ along the geodesic, and $k$ is the dimension of $M$. If we now define the {\em Morse index} $\mu_{Morse}(\gamma)$ of the geodesic to be the number of eigenvalues $\lambda<0$ of the boundary value problem \eqref{Morseequ} counted with multiplicities and 

\begin{align}\label{Morseconj}
m(t):=\dim\{u:[0,1]\rightarrow\mathbb{R}^k:\,-u''(x)+S(x)u(x)=0,\,u(0)=u(t)=0\},
\end{align} 
then the Morse index theorem states that

\begin{align}\label{Morsetheorem}
\mu_{Morse}(\gamma)=\sum_{t\in [0,1]}{m(t)}.
\end{align}
Instants $t\in [0,1]$ such that $m(t)>0$ are called {\em conjugate} and \eqref{Morsetheorem} in particular implies that they are finite in number.\\
Smale showed in \cite{Smale} (cf. also \cite{SmaleCorr}) that an equality like \eqref{Morsetheorem} continues to be true for strongly elliptic partial differential equations as follows: let $M$ be a smooth compact manifold of dimension $n$ with non-empty boundary $\partial M$, $E$ a Riemannian vector bundle of dimension $k$ over $M$ and $\varphi_t:M\rightarrow M$ a continuous curve of smooth embeddings such that $\varphi_0=id$ and $M_s:=\varphi_s(M)\subset M_t$ for $s>t$. Let $\mathcal{L}:\Gamma_0(E)\rightarrow\Gamma(E)$ be a strongly elliptic selfadjoint differential operator of even order, where $\Gamma(E)$ denotes the space of smooth sections of $E$ and $\Gamma_0(E)$ are those elements of $\Gamma(E)$ that vanish on $\partial M$. Note that by the strong ellipticity assumption, $\mathcal{L}$ has a finite Morse index, i.e., there are only finitely many negative eigenvalues of $\mathcal{L}$ which are all of finite multiplicity. We now obtain differential operators $\mathcal{L}_t:\Gamma_0(E\mid_{M_t})\rightarrow\Gamma(E\mid_{M_t})$ by restricting $\mathcal{L}$ to $E\mid_{M_t}$ and we denote

\begin{align*}
m(t)=\dim\{u\in\Gamma_0(E_t):\mathcal{L}_tu=0\}. 
\end{align*}
Smale's theorem states that under common assumptions on the operators $\mathcal{L}$, the corresponding equality \eqref{Morsetheorem} still holds. Later Uhlenbeck (cf. \cite{Uhlenbeck}) and Swanson (cf. \cite{Swansona},\cite{Swansonb}) gave alternative proofs of Smale's result using abstract Hilbert space theory and intersection theory in symplectic Hilbert spaces, respectively. Note that the classical Morse index theorem \eqref{Morsetheorem} is an immediate consequence of Smale's result in the case $n=1$.\\
A completely different variation of Morse's classical result is inspired by physical applications and concerns the corresponding statement for geodesics in semi-Riemannian manifolds $(M,g)$, which comprise the models of space-time in general relativity theory. After introducing coordinates, the equations \eqref{Morseequ} are in this more general case

\begin{equation}\label{Morseequsemi}
\left\{
\begin{aligned}
Ju''(x)+S(x)u(x)&=\lambda\,u(x),\quad x\in[0,1],\\
u(0)=u(1)&=0,
\end{aligned}
\right.
\end{equation}
where $J$ is the diagonal matrix

\begin{align}\label{J}
J=\diag(\underbrace{-1,\ldots,-1}_{k-\nu},\underbrace{1,\ldots,1}_\nu).
\end{align}
and $\nu$ is the index of the semi-Riemannian metric $g$. Twenty years ago, Helfer explored Morse index theorems for \eqref{Morseequsemi} in \cite{Helfer} and ascertained that it is not even possible to make sense of the values involved in the classical result \eqref{Morsetheorem} in this generality: the Morse index of \eqref{Morseequsemi} is easily seen to be infinite if $\nu\neq0$, and moreover, conjugate instants may accumulate. Starting with Helfer's work, considerable amount of research has been done in order to extend the Morse-index theorem to geodesics in arbitrary semi-Riemannian manifolds (cf. \cite{PiccioneMITinSRG}). A new approach to this problem was proposed by Musso, Pejsachowicz and the first author in \cite{MPP}, where topological tools like the spectral flow and the winding number were used in order to give a meaning to the indices in \eqref{Morsetheorem} in the semi-Riemannian setting. Subsequently, the second author gave an alternative proof of this general version of the Morse theorem using the Atiyah-J\"anich bundle and $K$-theoretic methods (cf. \cite{Wa}).\\
Recently, also Smale's theorem was revisited and extended to more general boundary conditions under the additional assumptions that the manifold $M$ is a star-shaped domain $\Omega$ in some Euclidean space $\mathbb{R}^n$, the shrinking $\varphi$ is the canonical contraction, and in particular, $k=1$, i.e., only scalar partial differential equations were considered. Deng and Jones studied in \cite{DengJones} zeroth-order perturbations of the scalar Laplacian for a rather general class of boundary value problems. Subsequently, the first author extended their results for the Dirichlet and Neumann problem in collaboration with Dalbono to general scalar second order elliptic partial differential equations in \cite{AleDalbono}. The novelty in these investigations is that now, except for the case of the classical Dirichlet condition as treated by Smale in \cite{Smale}, conjugate points can accumulate as in the case of semi-Riemannian geodesics. Hence the right hand side in \eqref{Morsetheorem} does no longer exist, while the left hand side is still well defined. Deng and Jones tried to overcome this problem in \cite{DengJones} by using a Maslov index for curves of Lagrangian subspaces in a symplectic Hilbert space consisting of functions on the boundary of $\Omega$. Note that compared to \eqref{Morseequ}, the equations considered in \cite{DengJones} and \cite{AleDalbono} correspond for Dirichlet boundary conditions to the case of geodesics in one-dimensional Riemannian manifolds.\\
Finally, the authors studied bifurcation phenomena for scalar semilinear elliptic differential equations on star-shaped domains under shrinking of the domain by variational methods in \cite{AleIchDomain} and \cite{AleIchBall}, and obtained incidentally a new proof of Smale's theorem \cite{Smale} for scalar elliptic equations (cf. also \cite{WaterstraatSurvey}).\\
The aim of this work is to extend the semi-Riemannian index theorem from \cite{MPP} for the indefinite boundary value problem \eqref{Morseequsemi} to elliptic systems of partial differential equations, and to study bifurcation phenomena under shrinking of the domain. Let $\Omega$ be a smooth bounded domain in $\mathbb{R}^n$ which we assume to be star-shaped with respect to $0$. In what follows, we denote for $0<r\leq 1$

\[\Omega_r:=\{r\cdot x:\, x\in\Omega\}\]
and we consider the Dirichlet boundary value problems   

\begin{equation}\label{bvpnonlin}
\left\{
\begin{aligned}
J\Delta u(x)+V(x,u(x))&=0,\quad x\in\Omega_r\\
u(x)&=0,\quad x\in\partial\Omega_r,
\end{aligned}
\right.
\end{equation}
where $V:\overline{\Omega}\times\mathbb{R}^k\rightarrow\mathbb{R}^k$ is a smooth map such that $V(x,0)=0$ for all $x\in\Omega$ and $J$ is as in \eqref{J} for some $0\leq\nu\leq k$. We call the parameter $r$ the radius, and we note that $u\equiv 0$ is a solution of \eqref{bvpnonlin} for all $r\in(0,1]$. A radius $r^\ast\in(0,1]$ is said to be a \textit{bifurcation radius} if there exist a sequence $\{r_n\}_{n\in\mathbb{N}}$ and weak solutions $0\neq u_n\in H^1_0(\Omega_{r_n},\mathbb{R}^k)$ of \eqref{bvpnonlin} such that $r_n\rightarrow r^\ast$ and $\|u_n\|_{H^1_0(\Omega_{r_n},\mathbb{R}^k)}\rightarrow 0$. As we will see below, an important object for studying bifurcation is given by the linearisation of \eqref{bvpnonlin}, which is the Dirichlet boundary value problem

\begin{equation}\label{bvplin}
\left\{
\begin{aligned}
J\Delta u(x)+S(x)u(x)&=0,\quad x\in\Omega_r\\
u(x)&=0,\quad x\in\partial\Omega_r,
\end{aligned}
\right.
\end{equation}
where

\[S(x):=D_uV(x,0),\quad x\in\Omega,\]
is a smooth family of $k\times k$ matrices. In what follows, we assume that $S(x)$ is symmetric for all $x\in\Omega$. We call $r\in(0,1]$ a \textit{conjugate radius} if \eqref{bvplin} has a non-trivial solution, and we say that \eqref{bvpnonlin} is non-degenerate if $1$ is not a conjugate radius for \eqref{bvplin}.\\
Note that if $J=-I_k$, where $I_k$ is the identity matrix of size $k$, then \eqref{bvplin} is a special case of the equations considered by Smale in \cite{Smale}, and if moreover $k=1$, such equations were treated in \cite{DengJones} for more general boundary conditions.\\
It is worth to note that for $n=1$, i.e., a one dimensional domain $\Omega$, \eqref{bvplin} are precisely the equations \eqref{Morseequsemi} coming from geodesics in semi-Riemannian manifolds. Consequently, if $J\neq -I_k$ in this case, then the corresponding Morse index is infinite, and conjugate radii may accumulate according to Helfer's work \cite{Helfer} which we have already mentioned above. In particular, Smale's theorem \cite{Smale} cannot hold in the situation that we are studying here, and our aim is to extend it to our equations \eqref{bvplin} as the Morse index theorem \eqref{Morsetheorem} was generalised to the equations \eqref{Morseequsemi} in \cite{MPP}. Accordingly, we substitute the Morse index of the equations \eqref{bvplin} by the spectral flow $\sfl(h,[0,1])$ of a suitable path of Fredholm quadratic forms $h$ as in \cite{MPP}. Moreover, we use the Maslov index $\mu_{Mas}(\ell,\mu,[0,1])$ for paths of Lagrangian subspaces $\ell$ in the von Neumann quotient $\beta$ from \cite{BoossFurutani} to extend the right hand side in \eqref{Morsetheorem} to our equations \eqref{bvplin}, where the subspaces $\ell$ are obtained from the abstract Cauchy data spaces of \eqref{bvplin} and $\mu$ corresponds to the Dirichlet boundary condition. Our main theorem establishes a Morse index theorem for elliptic systems of second order partial differential equations which are not necessarily strongly elliptic, and reads as follows:

\begin{theorem*}
If \eqref{bvpnonlin} is non-degenerate, then

\[\sfl(h,[0,1])=\mu_{Mas}(\ell, \mu, [0,1])\in\mathbb{Z}.\]
\end{theorem*}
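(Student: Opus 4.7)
The plan is to reduce the equality to a local computation at each conjugate radius, after first moving everything to a fixed reference domain. A natural first step is to use the scaling diffeomorphism $x\mapsto r x$ to pull back the boundary value problem on $\Omega_r$ to one on the fixed domain $\Omega$. This produces a smooth $r$-dependent family of symmetric Fredholm problems on $H^1_0(\Omega,\mathbb{R}^k)$, and $h(r)$ is the associated Fredholm quadratic form. Under this rescaling, the kernels $\ker h(r)$ are in canonical bijection with the non-trivial weak solutions of \eqref{bvplin} on $\Omega_r$, so conjugate radii are exactly the instants at which $h(r)$ is degenerate. The non-degeneracy assumption then guarantees that $h(1)$ is invertible, while for sufficiently small $r$ the dominance of the Laplacian term forces $h(r)$ to be invertible as well; hence the spectral flow $\sfl(h,[0,1])$ is well defined and integer valued.

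Next I would invoke the general crossing-form description of the spectral flow: after a small generic homotopy fixing the endpoints, the conjugate radii become isolated regular crossings, and $\sfl(h,[0,1])$ equals the sum over these crossings $r_{\ast}$ of the signatures of the crossing form $\Gamma_h(r_{\ast}):\ker h(r_{\ast})\to\mathbb{R}$ obtained by differentiating $h$ along the path and restricting to the kernel. In parallel, the Maslov index $\mu_{Mas}(\ell,\mu,[0,1])$ in the Bo\"oss--Furutani von Neumann quotient $\beta$ admits an analogous local formula: the intersections $\ell(r)\cap\mu$ are identified, via the boundary trace and normal derivative, with elements of $\ker h(r)$, and the crossing form is the symplectic pairing of an element of the intersection with its $r$-derivative along $\ell$.

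The heart of the proof lies in identifying these two crossing forms at each conjugate radius. Differentiating the equation $J\Delta u_r+S(r\,\cdot)u_r=0$ in $r$ at a crossing $r_{\ast}$, testing against $u_{r_{\ast}}$ and integrating by parts, and exploiting the star-shapedness of $\Omega$ through a Pohozaev-type identity, one rewrites the bulk integral defining $\Gamma_h(r_{\ast})$ as a boundary integral on $\partial\Omega$ involving the normal derivatives $\partial_{\nu}u_{r_{\ast}}$. Tracking the indefinite matrix $J$ and the normal directions shows that this boundary integral is precisely the symplectic pairing appearing in the Maslov crossing form, up to a common positive factor that does not affect the signature. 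Summing over the finitely many regular crossings yields the desired equality for generic paths, and the general case follows from the homotopy invariance of both indices.

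The main obstacle will be the pointwise identification of the two crossing forms, together with the rigorous interpretation of the Cauchy data spaces $\ell(r)$ in the Bo\"oss--Furutani framework when the principal symbol contains the indefinite matrix $J$. In the strongly elliptic case $J=-I_k$ this identification is essentially classical, but in the indefinite setting one cannot rely on the standard elliptic trace theorems; instead one must work with the abstract Cauchy data formalism to make sense of the traces and normal derivatives and to verify that the symplectic structure induced by $J$ matches the one used to define $\mu_{Mas}$. This bookkeeping of symplectic structures and signs, rather than any deeper topological input, is the delicate technical core of the argument.
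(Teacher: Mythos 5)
Your outline matches the paper's proof essentially step for step: reduce to $[r^\ast,1]$ using the non-degeneracy of $h_0$, perturb by $\delta\cdot I$ to make all crossings regular, compute both crossing forms at a regular crossing $r_0$ via the dilated family $u_r(x)=u\bigl((r/r_0)x\bigr)$ (which uses star-shapedness) and integration by parts, and observe that both reduce to the boundary integral $\int_{\partial\Omega}\langle J\dot u,\partial_n u\rangle\,dS$ up to the positive factor $r_0^2$. The only implementation detail worth flagging, which the paper makes explicit, is the passage from the Fredholm quadratic form $h^\delta_r$ on $H^1_0(\Omega,\mathbb{R}^k)$ to the unbounded operator $\mathcal{A}^\delta_r=\tfrac{1}{r^2}J\Delta+S(r\cdot)+\delta$ on $L^2$ with domain $H^2\cap H^1_0$, via the identity $h^\delta_r(u)=\langle\mathcal{A}^\delta_r u,u\rangle_{L^2}$: this is what identifies $\ker L_{h_r}\subset H^1_0$ with the classical solution space, and it is needed both to apply the generic-$\delta$ regularisation (Theorem \ref{thm:Sard}) and to compare with the Maslov crossing form computed via $\tau(\ker\mathcal{A}^\delta_{r_0})\subset\beta$.
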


Moreover, we introduce in a second theorem a new proof of Smale's theorem for the equations \eqref{bvplin} in the strongly elliptic case, i.e., if $J=-I_k$.

\begin{theorem*}
If \eqref{bvpnonlin} is non-degenerate and $J=-I_k$, then there are only finitely many conjugate instants in $(0,1)$ and

\[\sfl(h,[0,1])=-\sum_{r\in(0,1)}{m(r)},\]
where $m(r)$ denotes the dimension of the space of classical solutions of \eqref{bvplin}.
\end{theorem*}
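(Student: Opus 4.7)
The plan is to reduce the problem to a family of self-adjoint operators on the fixed domain $\Omega$ by rescaling, and then exploit monotonicity of Dirichlet eigenvalues under domain expansion. Setting $v(x):=u(rx)$ converts \eqref{bvplin} (with $J=-I_k$) on $\Omega_r$ into
\[
-\Delta v(x)+r^{2}S(rx)\,v(x)=0,\qquad x\in\Omega,\quad v|_{\partial\Omega}=0,
\]
and hence identifies $h$ with the path of quadratic forms $h_{r}(v)=\int_{\Omega}\bigl(|\nabla v|^{2}+r^{2}\langle S(r\cdot)v,v\rangle\bigr)\,dx$ on $H^{1}_{0}(\Omega,\mathbb{R}^{k})$. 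The associated operators $A_{r}:=-\Delta+r^{2}S(r\cdot)$, with Dirichlet boundary conditions on $\Omega$, form a smooth path of self-adjoint operators with compact resolvent, each bounded from below with finite Morse index. In particular the spectral flow of $h$ equals the classical spectral flow of the operator family $A_{r}$.

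The key ingredient is monotonicity. By the min--max principle and extension by zero, the Dirichlet eigenvalues $\lambda_{k}(r)$ of $-\Delta+S$ on $\Omega_{r}$ are monotone non-increasing functions of $r$: any admissible subspace of $H^{1}_{0}(\Omega_{r_{1}},\mathbb{R}^{k})$ embeds isometrically, with preserved Rayleigh quotient, into $H^{1}_{0}(\Omega_{r_{2}},\mathbb{R}^{k})$ when $r_{1}<r_{2}$. Pulling back to the fixed domain, Kato's theory of analytic perturbations of self-adjoint operators with compact resolvent allows us to label the eigenvalues by real analytic branches $r\mapsto\lambda_{k}(r)$. Real analyticity together with monotonicity implies that each branch either is identically zero on an open interval or has only isolated zeros; the former would contradict unique continuation for the elliptic equation $A_{r}v=0$. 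Combined with $A_{0}=-\Delta>0$ and the non-degeneracy at $r=1$, this yields that the set of conjugate radii is a discrete, hence finite, subset of $(0,1)$.

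It remains to compute the local contribution to the spectral flow at each conjugate radius $r^{\ast}\in(0,1)$. The crossing form $\Gamma(h,r^{\ast}):\ker A_{r^{\ast}}\to\mathbb{R}$ is $\Gamma(h,r^{\ast})(v)=\tfrac{d}{dr}\bigl|_{r^{\ast}}h_{r}(v)$; the monotonicity argument above shows this is negative semi-definite, and strict monotonicity of the analytic branches crossing zero (again by unique continuation, which prevents a vanishing derivative on a non-zero eigenvalue branch crossing $0$) upgrades this to negative-definiteness. By the standard identification of the spectral flow with a sum of signatures of crossing forms at regular crossings, the contribution of $r^{\ast}$ is $-\dim\ker A_{r^{\ast}}=-m(r^{\ast})$, and summing over the finitely many conjugate radii yields
\[
\sfl(h,[0,1])=-\sum_{r\in(0,1)}m(r).
\]

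The main obstacle will be the careful identification of the Fredholm quadratic form path $h$ from the general construction with the rescaled path $h_{r}$ above, together with the promotion of the negative semi-definiteness of the crossing form to strict negative-definiteness; I expect the latter to follow cleanly from real-analyticity of eigenvalue branches combined with unique continuation, so that no eigenvalue branch can be tangent to $0$ at a conjugate radius.
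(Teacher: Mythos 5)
Your route is genuinely different from the paper's: you try to deduce everything from domain-monotonicity of Dirichlet eigenvalues under the nesting $\Omega_{r_1}\subset\Omega_{r_2}$ (itself a consequence of star-shapedness), rather than from a direct computation of the crossing form. This is an attractive idea and the reduction to the rescaled path $A_r=-\Delta+r^2S(r\cdot)$ on the fixed domain is exactly what the paper does. However, there are two real gaps.

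First, you invoke Kato--Rellich analytic perturbation theory to get real-analytic eigenvalue branches, and the whole finiteness argument rests on analyticity. But the paper only assumes $V$, hence $S$, to be smooth, so $r\mapsto A_r$ is a $C^\infty$ but not an analytic path; you do not get analytic eigenvalue branches for free. (The paper avoids this by showing directly that every crossing is regular, which forces isolation; regularity is established by the explicit formula discussed next.)

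Second, and more seriously, the step from ``crossing form negative semi-definite'' to ``negative definite'' is asserted but not proved. Domain monotonicity via min--max only yields $\lambda_k'(r^\ast)\le 0$; unique continuation, as you use it (the Cauchy problem version: zero Dirichlet plus zero Neumann data forces $u\equiv 0$), does not by itself forbid a vanishing first derivative of an analytic, non-increasing eigenvalue branch at a zero (think of a function like $-(r-r^\ast)^3$). The link between unique continuation and strict negativity of the derivative is exactly a Hadamard-type formula for the eigenvalue derivative,
\[
\lambda_k'(r^\ast)=-c\int_{\partial\Omega}\lvert\partial_n u\rvert^2\,\langle x,\nu(x)\rangle\,dS,
\]
with $c>0$, from which $\lambda_k'(r^\ast)=0$ implies $\partial_nu\equiv 0$ and then $u\equiv 0$ by unique continuation. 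Deriving this formula is precisely the content of the paper's Stokes/divergence computation, which also makes visible where star-shapedness ($\langle x,\nu(x)\rangle>0$) enters; your write-up uses star-shapedness only implicitly through domain nesting and never produces the boundary term that turns semi-definiteness into definiteness. Without that computation the crossing form argument stalls, and the analyticity argument is also not available, so neither route to finiteness and to the signature count is actually closed.

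A way to salvage the spirit of your approach without either analyticity or crossing forms would be to prove \emph{strict} domain monotonicity $\lambda_k(\Omega_{r_1})>\lambda_k(\Omega_{r_2})$ for $r_1<r_2$ (this uses the weak unique continuation property that eigenfunctions cannot vanish on an open set, applied to the extension by zero), and then count eigenvalue branches crossing zero directly via the min--max characterization and $\mu_{Morse}(h_1)<\infty$. That would be closer to a complete proof, but it is not what you wrote, and in any case it still relies crucially on a unique-continuation input that your current text does not correctly identify.
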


Finally, the case $n=1$ is considered in which \eqref{bvplin} are ordinary differential equations. We derive as immediate corollary the classical Morse index theorem \eqref{Morsetheorem}, and we also prove that the semi-Riemannian Morse index theorem \cite{MPP} follows from our main theorem on systems of partial differential equations.\\
Subsequently, we use our index theorems to discuss the relation between conjugate radii and bifurcation radii. Our results extend the main theorems of the recent articles \cite{AleIchDomain} and \cite{AleIchBall} of the authors which show that for scalar equations, i.e., $k=1$ and $J=-I_1$, conjugate radii and bifurcation radii coincide. We will see below that the same assertion holds for $k>1$ as long as $J=-I_k$, however, the remarkable difference is that for $J\neq-I_k$ the bifurcation radii are in general just a proper subset of the conjugate radii. We illustrate this below by an example.\\
The paper is structured as follows: in the second section we consider the weak formulation of the equations \eqref{bvpnonlin} and we introduce the generalised Morse index, which is defined by means of the spectral flow for paths of Fredholm quadratic forms that we introduce before in a separate section. In the third section we define the Maslov index for \eqref{bvplin}, where we follow the ideas of Booss and Furutani from \cite{BoossFurutani}. In the fourth section we state and prove our main theorems on the equality of these indices and their corollaries. In the fifth section we discuss bifurcation for \eqref{bvpnonlin} under shrinking of the domain in connection with the non-vanishing of the indices for the linearised equations \eqref{bvplin}. Finally, there are two appendices. In the first one we recall the definition of the spectral flow for paths of selfadjoint Fredholm operators and crossing forms from \cite{RobbinSalamon} which are important in our proofs.  In the second one we recall some facts about the Fredholm Lagrangian Grassmannian of symplectic Hilbert spaces and the Maslov index, where we follow Furutani's survey \cite{Furutani}.

\subsubsection*{Acknowledgements}
The authors wish to express their thanks to Graham Cox, Christopher K.R.T. Jones and Yuri Latushkin for several helpful conversations. Moreover, we are grateful to Jacobo Pejsachowicz and the anonymous referee for many valuable comments which have improved the presentation of this work.


\section{The generalised Morse index}
In this section we recall in a first subsection the spectral flow for Fredholm quadratic forms that was introduced in \cite[\S2]{MPP}. Subsequently, we consider the weak formulations of the equations \eqref{bvpnonlin} and define the generalised Morse index of the linearised equation \eqref{bvplin}.

\subsection{The spectral flow for Fredholm quadratic forms}\label{sect-sfl}
In what follows, we assume that the reader is familiar with the definition of the spectral flow for paths of selfadjoint Fredholm operators, which we recap in Appendix \ref{sect-appsfl}.\\
A bounded quadratic form $q: H\rightarrow\mathbb{R}$ on a real Hilbert space $H$ is a map for which there exists a bounded symmetric bilinear form $b_{q}\colon H\times H\rightarrow\mathbb{R}$ such that $q(u)=b_q(u,u)$, $u\in H$. By the Riesz representation theorem, there is a bounded selfadjoint operator $L_q:H\rightarrow H$ such that 

\begin{align}\label{Rieszrep}
b_q(u,v)=\langle L_qu,v\rangle_H,\quad u,v\in H.
\end{align}
We call $L_q$ the Riesz representation of $q$, and $q$ is a {\em Fredholm quadratic form\/} if $L_q$ is Fredholm, i.e., $\ker L_q$ is of finite dimension and $\im L_q$ is closed.\\
The space $Q(H)$ of bounded quadratic forms is a Banach space with respect to the norm 

\[ \left\|  q\right\| = \sup\left\{ \left| q(u)\right| : \left\| u\right\| =1\right\}=\|L_q\|.\]
The subset $Q_F(H)$ of all Fredholm quadratic forms is an open subset of $Q(H)$ which is stable under perturbations by weakly continuous quadratic forms (cf. \cite[\S 21.10]{Zeidler}). A quadratic form $q\in Q(H)$ is called {\em non-degenerate\/} if the corresponding Riesz representation $L_q$ is invertible. Since the set $GL(H)$ of invertible operators is open in $\mathcal{L}(H)$, the non-degenerate quadratic forms are open in $Q(H)$ as well. The \textit{Morse index} of $q\in Q_F(H)$ is defined by 

\begin{align}\label{Morse}
\begin{split}
\mu_{Morse}(q)&:=\sup\{\dim U:\,U\,\text{is a linear subspace of}\ H,\,q(u)<0\,\,\text{for all}\,\, u\in U\setminus\{0\}\}\\
&=\mu_{Morse}(L_q),
\end{split}
\end{align}
where the latter equality easily follows from functional calculus (cf. \cite[Prop. 9.4.2]{PalaisTerng}). As for bounded selfadjoint Fredholm operators, the space $Q_F(H)$ consists of three components 

\[Q_F(H)=Q^+_F(H)\cup Q^i_F(H)\cup Q^-_F(H),\]
where $Q^\pm_F(H)=\{q\in Q_F(H):\, \mu_{Morse}(\pm q)<\infty\}$ are contractible and $Q^i_F(H)$ is a classifying space for the $KO$-theory functor $KO^{-7}$.

\begin{defi}
Let $q:[a,b]\rightarrow Q_F(H)$ be a path having non-degenerate endpoints. The spectral flow of $q$ is defined by

\[\sfl(q,[a,b])=\sfl(L_q,[a,b]).\]
\end{defi}

The following properties of the spectral flow are immediate consequences of the corresponding assertions in Appendix A.

\begin{enumerate}
	\item[i)] If $h:[0,1]\times[a,b]\rightarrow Q_F(H)$ is such that $h(\lambda,a)$ and $h(\lambda,b)$ are non-degenerate for all $\lambda\in[0,1]$, then
	
	\[\sfl(h(0,\cdot),[a,b])=\sfl(h(1,\cdot),[a,b]).\]
	
	\item[ii)] If $q_t$ is non-degenerate for all $t\in[a,b]$, then 
	
	\[\sfl(q,[a,b])=0.\]
	
	\item[iii)] If $q_c$ is non-degenerate for some $c\in(a,b)$, then
	
	\[\sfl(q,[a,b])=\sfl(q,[a,c])+\sfl(q,[c,b]).\]
	
	\item[iv)] If $q_t\in Q^+_F(H)$ for all $t\in[a,b]$, then
	
	\[\sfl(q,[a,b])=\mu_{Morse}(q_a)-\mu_{Morse}(q_b).\]   

\end{enumerate}

As for paths of selfadjoint Fredholm operators (cf. Thm. \ref{thm:crossing}), the spectral flow can be computed explicitly for paths of Fredholm quadratic forms which are sufficiently regular. If $q\colon [a,b]\rightarrow Q_F(H)$ is differentiable at $t$, then the derivative $\dot{q}(t)$ with respect to $t$ is again a quadratic form. We call $t\in[a,b]$ a crossing if $q(t)$ is degenerate and we say that $t$ is regular if the {\em crossing form\/} $\Gamma(q,t)$, defined by 

\[\Gamma(q,t):=\dot{q}(t)|_{\ker L_{q(t)}},
\]
is non-degenerate.

\begin{prop}\label{crossformcomsfl}
We assume that $q:[a,b]\rightarrow Q_F(H)$ is continuously differentiable and has non-degenerate endpoints. If all crossings $t$ of $q$ are regular, then they are finite
in number and

\begin{equation*}
\sfl(q,[a,b])= \sum_{t\in(a,b)}  \sgn \, \Gamma(q,t).
\end{equation*}
\end{prop}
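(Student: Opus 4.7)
The plan is to reduce the statement to the analogous crossing-form result for paths of bounded selfadjoint Fredholm operators (Thm.~\ref{thm:crossing} in Appendix~\ref{sect-appsfl}) by transporting everything along the Riesz correspondence $q\mapsto L_q$ introduced in \eqref{Rieszrep}. First I would recall that this correspondence is a linear isometric isomorphism from $Q(H)$ onto the Banach space of bounded selfadjoint operators on $H$, which restricts to a homeomorphism between $Q_F(H)$ and the selfadjoint Fredholm operators. In particular, if $q\colon[a,b]\to Q_F(H)$ is continuously differentiable, then so is the operator path $t\mapsto L_{q(t)}$, and differentiation commutes with the Riesz correspondence, so
\[
\dot L_{q(t)} \;=\; L_{\dot q(t)},\qquad t\in[a,b].
\]

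Next I would identify the two notions of crossing. An instant $t\in[a,b]$ is a crossing of $q$ exactly when $L_{q(t)}$ has non-trivial kernel, i.e. when $t$ is a crossing of the operator path $L_q$ in the sense of Appendix~\ref{sect-appsfl}. For any $u\in\ker L_{q(t)}$ the identity above gives
\[
\Gamma(q,t)(u) \;=\; \dot q(t)(u) \;=\; \langle L_{\dot q(t)}u,u\rangle_H \;=\; \langle \dot L_{q(t)}u,u\rangle_H \;=\; \Gamma(L_q,t)(u),
\]
so the crossing form of $q$ at $t$ coincides with the crossing form of the operator path $L_q$ at $t$. Consequently, regularity of a crossing of $q$ is equivalent to regularity of the corresponding crossing of $L_q$, and both the signatures and their sums agree.

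Finally, the non-degeneracy of $q(a)$ and $q(b)$ is, by definition, the invertibility of $L_{q(a)}$ and $L_{q(b)}$, so the path $L_q$ satisfies all the hypotheses of Thm.~\ref{thm:crossing}. That theorem simultaneously yields (i) that the regular crossings of $L_q$ on the compact interval $[a,b]$ are isolated and therefore finite in number, and (ii) the crossing-form formula
\[
\sfl(q,[a,b]) \;=\; \sfl(L_q,[a,b]) \;=\; \sum_{t\in(a,b)} \sgn\,\Gamma(L_q,t) \;=\; \sum_{t\in(a,b)} \sgn\,\Gamma(q,t),
\]
which is the claim. I do not expect any serious obstacle here: the only substantive point is to verify that the Riesz representation commutes with differentiation, which follows at once from its linearity and isometry, and that the two versions of the crossing form agree, which is the one-line computation displayed above. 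Everything else is a direct translation of the operator-level result.
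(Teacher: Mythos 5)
Your proof is correct and is exactly the intended argument: the paper states Proposition~\ref{crossformcomsfl} without proof, presenting it as the quadratic-form analogue of Theorem~\ref{thm:crossing}, and the translation you carry out — transporting the path, crossings, and crossing forms along the linear isometry $q\mapsto L_q$, noting $\dot L_{q(t)}=L_{\dot q(t)}$, and then invoking Theorem~\ref{thm:crossing} — is precisely how one fills in that gap.
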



\subsection{The generalised Morse index and conjugate points}\label{section:genMorse}
Let us assume as in the introduction that $\Omega$ is a smooth bounded domain in $\mathbb{R}^n$ which is star-shaped with respect to $0$, and let $J$ be a diagonal matrix as in \eqref{J} for some $0\leq\nu\leq k$. We define a function $\nu:\{0,\ldots,k\}\rightarrow\mathbb{Z}_2$ by

\begin{align*}
\nu(j)=\begin{cases}
1\quad\text{if}\,\, 1\leq j\leq k-\nu  \\
-1\quad\text{if}\,\, k-\nu<j\leq k
\end{cases}
\end{align*}
Let $V=(V^1,\ldots,V^k):\overline{\Omega}\times\mathbb{R}^k\rightarrow\mathbb{R}^k$ be a smooth gradient vector field, i.e., there exists some $G:\overline{\Omega}\times\mathbb{R}^k\rightarrow\mathbb{R}$ such that $\nabla_2G(x,\xi)=V(x,\xi)$ for all $x\in\Omega$ and $\xi\in\mathbb{R}^k$, where $\nabla_2$ denotes the gradient with respect to the variable in $\mathbb{R}^k$. In what follows, we suppose that there are constants $\alpha, C$ such that for $j=1,\ldots,k$

\begin{align}\label{growth}
|\nabla_2V^j(x,\xi)|\leq C(1+|\xi|^{\alpha-1}),\quad (x,\xi)\in\Omega\times\mathbb{R}^k, 
\end{align}
where $1\leq\alpha\leq\frac{n+2}{n-2}$ if $n\geq 3$ and $1\leq\alpha<\infty$ if $n=2$. Finally, in the case $n=1$, that is, \eqref{bvpnonlin} is an ordinary differential equation, we do not impose a growth condition on the nonlinearity $V$.\\
It is well known that under the assumption \eqref{growth}, the functional

\begin{align*}
\psi:H^1_0(\Omega,\mathbb{R}^k)\rightarrow\mathbb{R},\quad \psi(u)=-\frac{1}{2}\sum^k_{j=1}{(-1)^{\nu(j)}\int_\Omega{\langle\,\nabla u^j,\nabla u^j\rangle\,dx}}+\int_\Omega{G(x,u(x))\,dx}
\end{align*}
is $C^2$ and its derivative at $u\in H^1_0(\Omega,\mathbb{R}^k)$ is

\begin{align*}
(D_u\psi)(v)=-\sum^k_{j=1}(-1)^{\nu(j)}{\int_\Omega{\langle \,\nabla u^j,\nabla v^j\rangle\,dx}}+\int_\Omega{\langle V(x,u(x)),v(x)\rangle\,dx},\quad v\in H^1_0(\Omega,\mathbb{R}^k).
\end{align*}
Consequently, the critical points of $\psi$ are precisely the weak solutions of the non-linear equation \eqref{bvpnonlin} on the domain $\Omega=\Omega_1$.\\
From now on, we assume that 

\begin{align}\label{Vvanishes}
V(x,0)=0,\quad x\in\Omega,
\end{align}
which implies that $0\in H^1_0(\Omega,\mathbb{R}^k)$ is a critical point of $\psi$. The Hessian of $\psi$ at $0$ is the bilinear form

\begin{align*}
D^2_0\psi(u,v)=-\sum^k_{j=1}{(-1)^{\nu(j)}\int_\Omega{\langle \,\nabla u^j,\nabla v^j\rangle\,dx}}+\int_\Omega{\langle S(x)u(x),v(x)\rangle\,dx},\quad u,v\in H^1_0(\Omega,\mathbb{R}^k),
\end{align*}
where  

\[S(x)=(D_uV)(x,0),\quad x\in\Omega.\]
Note that $S(x)$ is symmetric since it is the Hessian matrix of $G(x,\cdot):\mathbb{R}^k\rightarrow\mathbb{R}^k$ at the critical point $0\in\mathbb{R}^k$.\\
If we now set as in the introduction for $r\in(0,1]$

\[\Omega_r:=\{r\cdot x:\,x\in\Omega\},\]
then it is readily seen from \eqref{Vvanishes} that $0\in H^1_0(\Omega_r,\mathbb{R}^k)$ is a critical point of all functionals $\psi_r:H^1_0(\Omega_r,\mathbb{R}^k)\rightarrow\mathbb{R}$ defined by

\begin{align}\label{psir}
\psi_r(u)=-\frac{1}{2}\sum^k_{j=1}{(-1)^{\nu(j)}\int_{\Omega_r}{\langle \,\nabla u^j,\nabla u^j\rangle\,dx}}+\int_{\Omega_r}{G(x,u(x))\,dx},
\end{align}
and the corresponding Hessians are given by

\[D^2_0\psi_r(u,v)=-\sum^k_{j=1}{(-1)^{\nu(j)}\int_{\Omega_r}{\langle \,\nabla u^j,\nabla v^j\rangle\,dx}}+\int_{\Omega_r}{\langle S(x)u(x),v(x)\rangle\,dx},\quad u,v\in H^1_0(\Omega_r,\mathbb{R}^k).\]
After a change of coordinates $x\mapsto r\cdot x$, these transform to

\begin{align*}
-&\frac{1}{2}\sum^k_{j=1}{(-1)^{\nu(j)}r^n\int_{\Omega}{\langle \,\nabla u^j(r\cdot x),\nabla u^j(r\cdot x)\rangle\,dx}}+r^n\int_{\Omega}{G(r\cdot x,u(r\cdot x))\,dx}\\
=&-\frac{1}{2}\sum^k_{j=1}{(-1)^{\nu(j)}r^{n-2}\int_{\Omega}{\langle \,\nabla u^j_r(x),\nabla u^j_r(x)\rangle\,dx}}+r^n\int_{\Omega}{G(r\cdot x,u_r(x))\,dx}
\end{align*}
and

\begin{align*}
-&\sum^k_{j=1}{(-1)^{\nu(j)}r^{n} \int_\Omega{\langle \,\nabla u^j(r\cdot x),\nabla v^j(r\cdot x)\rangle\,dx}}+r^n\int_\Omega{\langle S(r\cdot x)u(r\cdot x),v(r\cdot x)\rangle\,dx}\\
=&-\sum^k_{j=1}{(-1)^{\nu(j)}r^{n-2} \int_\Omega{\langle \,\nabla u^j_r(x),\nabla v^j_r(x)\rangle\,dx}}+r^n\int_\Omega{\langle S(r\cdot x)u_r(x),v_r(x)\rangle\,dx},
\end{align*}
where $u_r(x)=u(r\cdot x)$ and $v_r(x)=v(r\cdot x)$ for $x\in\Omega$.\\
We now define a family of functionals $\tilde{\psi}:[0,1]\times H^1_0(\Omega,\mathbb{R}^k)\rightarrow\mathbb{R}$ by

\begin{align}\label{tildepsi}
\tilde{\psi}_r(u)=-\frac{1}{2}\sum^k_{j=1}{(-1)^{\nu(j)}\int_{\Omega}{\langle \,\nabla u^j(x),\nabla u^j(x)\rangle\,dx}}+r^2\int_{\Omega}{G(r\cdot x,u(x))\,dx},
\end{align}
and quadratic forms $h:[0,1]\times H^1_0(\Omega,\mathbb{R}^k)\rightarrow\mathbb{R}$ by

\begin{align*}
h_r(u)=-\sum^k_{j=1}{(-1)^{\nu(j)}\int_\Omega{\langle \,\nabla u^j,\nabla u^j\rangle\,dx}}+\int_\Omega{\langle S_r( x)u(x),u(x)\rangle\,dx},\quad u\in H^1_0(\Omega,\mathbb{R}^k),\, r\in[0,1],
\end{align*}
where $S_r(x):=r^2S(r\cdot x)$. Note that $h_r(u)=D^2_0\tilde{\psi}_r(u,u)$, $u\in H^1_0(\Omega,\mathbb{R}^k)$, $r\in[0,1]$.

\begin{lemma}\label{lem:Fredholm}
$h_r=h_0+r^2c_r$, $r\in[0,1]$, where $h_0$ is a non-degenerate Fredholm quadratic form and $c_r$ is weakly continuous. In particular, $h$ is a path of Fredholm quadratic forms on $H^1_0(\Omega,\mathbb{R}^k)$. 
\end{lemma}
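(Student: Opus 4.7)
The plan is to split $h_r = h_0 + r^2 c_r$ with $c_r(u) := \int_\Omega \langle S(rx)u, u\rangle\,dx$ --- which is immediate from the definition $S_r = r^2 S(r\cdot)$ --- and then verify separately that $h_0$ is non-degenerate Fredholm and that the family $r \mapsto r^2 c_r$ is a norm-continuous path of weakly continuous quadratic forms. The Fredholm property of each $h_r$, and continuity of the path in $Q_F(H)$, then follow at once from the stability of $Q_F(H)$ under weakly continuous perturbations recalled in Section \ref{sect-sfl}.

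For the leading term, I would equip $H := H^1_0(\Omega,\mathbb{R}^k)$ with the Dirichlet inner product $\langle u,v\rangle_H := \sum_{j=1}^{k} \int_\Omega \langle \nabla u^j, \nabla v^j\rangle\,dx$, which is equivalent to the standard $H^1$-norm by Poincar\'e's inequality. Under this choice the Riesz representative $L_{h_0}$ acts on the $j$-th factor by the scalar $-(-1)^{\nu(j)} \in \{\pm 1\}$, so it is a selfadjoint involution and in particular invertible. Hence $h_0$ is non-degenerate and trivially Fredholm.

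For the perturbation, smoothness of $S$ on $\overline{\Omega}$ yields both uniform boundedness and uniform continuity of $(r,x) \mapsto r^2 S(rx)$ on $[0,1] \times \overline{\Omega}$. Weak continuity of each $c_r$ is then immediate from the Rellich--Kondrachov compact embedding $H \hookrightarrow L^2(\Omega, \mathbb{R}^k)$: if $u_n \rightharpoonup u$ in $H$, then $u_n \to u$ in $L^2$ and $c_r(u_n) \to c_r(u)$. Continuity of $r \mapsto r^2 c_r$ in the norm of $Q(H)$ follows from the estimate
\[
\bigl|r^2 c_r(u) - (r')^2 c_{r'}(u)\bigr| \leq \sup_{x \in \overline{\Omega}} \bigl|r^2 S(rx) - (r')^2 S(r'x)\bigr| \cdot \|u\|_{L^2}^2
\]
combined with the continuous embedding $H \hookrightarrow L^2$. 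Invoking the perturbation stability then completes the proof. No step is substantial; the only point worth care is the choice of the Dirichlet inner product, which makes $L_{h_0}$ literally the diagonal involution above and eliminates any need to compute spectra.
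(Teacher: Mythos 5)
Your proof is correct and follows essentially the same route as the paper: split off the nonnegative‑order term $r^2 c_r$, dispose of it by Rellich--Kondrachov plus the perturbation stability of $Q_F(H)$, and use Poincar\'e for $h_0$. The only refinements you add are making explicit what ``simple consequence of Poincar\'e'' means (passing to the Dirichlet inner product so that $L_{h_0}$ is a diagonal involution, which is harmless since non-degeneracy and Fredholmness of the Riesz representative are invariant under equivalent inner products) and verifying norm continuity of $r\mapsto r^2 c_r$, which the paper leaves implicit.
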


\begin{proof}
We note at first that 

\[h_0(u)=-\sum^k_{j=1}{(-1)^{\nu(j)}\int_\Omega{\langle \,\nabla u^j,\nabla u^j\rangle\,dx}},\quad u\in H^1_0(\Omega,\mathbb{R}^k),\]
is a non-degenerate Fredholm quadratic form on $H^1_0(\Omega,\mathbb{R}^k)$, which is a simple consequence of the well known Poincar\'{e} inequality.\\
For the weak continuity of $c_r$, let $\{u_n\}_{n\in\mathbb{N}}$ be a sequence in $H^1_0(\Omega,\mathbb{R}^k)$ which weakly converges to some $u\in H^1_0(\Omega,\mathbb{R}^k)$. By the compactness of the embedding $H^1_0(\Omega,\mathbb{R}^k)\subset L^2(\Omega,\mathbb{R}^k)$, $\{u_n\}_{n\in\mathbb{N}}$ converges strongly in $L^2(\Omega,\mathbb{R}^k)$ to $u$. Since $c_r$ extends to a bounded quadratic form on $L^2(\Omega,\mathbb{R}^k)$, we conclude that $c_r(u_n)\rightarrow c_r(u)$.
\end{proof}

We leave the proof of the following elementary lemma to the reader.

\begin{lemma}\label{lemmaconj} 
The following assertions are equivalent for $r\in(0,1]$:  

\begin{enumerate}
\item $h_r$ is degenerate;
\item there exists $u\in H^1_0(\Omega_r,\mathbb{R}^k)$ such that $D^2_0\psi_r(u,v)=0$ for all $v\in H^1_0(\Omega_r,\mathbb{R}^k)$;
\item $r$ is a conjugate radius, i.e the boundary value problem

\begin{equation*}
\left\{
\begin{aligned}
J\Delta u(x)+S(x)u(x)&=0,\quad x\in\Omega_r\\
u(x)&=0,\quad x\in\partial\Omega_r,
\end{aligned}
\right.
\end{equation*}
that we already introduced in \eqref{bvplin} as linearisation of \eqref{bvpnonlin}, has a non-trivial solution.
\end{enumerate}
\end{lemma}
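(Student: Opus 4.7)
The plan is to prove the three equivalences as a short round-robin $(1) \Leftrightarrow (2) \Leftrightarrow (3)$, exploiting that the hard work has already been done in the computations immediately preceding the lemma.

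For $(1) \Leftrightarrow (2)$, I would use the rescaling isomorphism $\Phi_r : H^1_0(\Omega_r,\mathbb{R}^k) \to H^1_0(\Omega,\mathbb{R}^k)$, $u \mapsto u_r$, where $u_r(x) = u(r \cdot x)$. This is a linear homeomorphism (its inverse is $w \mapsto w(\cdot/r)$, and both maps are clearly bounded between the respective $H^1_0$ spaces). From the two displays in the paper immediately before the lemma, one reads off the identity
\[
D^2_0\psi_r(u,v) \;=\; r^{n-2}\, b_{h_r}(u_r, v_r), \qquad u,v \in H^1_0(\Omega_r,\mathbb{R}^k),
\]
where $b_{h_r}$ is the bilinear form associated to $h_r$. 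Since $r \in (0,1]$, the scalar factor $r^{n-2}$ is non-zero. Thus the existence of $u \neq 0$ with $D^2_0\psi_r(u,\cdot) \equiv 0$ is equivalent, via $\Phi_r$, to the existence of a non-zero element of $\ker L_{h_r}$, i.e.\ to the degeneracy of $h_r$.

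For $(2) \Leftrightarrow (3)$, I would first observe that for a classical solution $u$ of the boundary value problem in (3), a componentwise integration by parts against an arbitrary $v \in H^1_0(\Omega_r,\mathbb{R}^k)$ produces $D^2_0\psi_r(u,v) = 0$, giving $(3) \Rightarrow (2)$. The converse is the routine part: if $u \in H^1_0(\Omega_r,\mathbb{R}^k)$ satisfies $D^2_0\psi_r(u,v) = 0$ for every $v$, then $u$ is a weak solution of $J \Delta u + S(x) u = 0$ with Dirichlet boundary data. Because $J$ is diagonal with entries $\pm 1$, the system decouples at the level of the principal part: each component $u^j$ weakly satisfies $\pm \Delta u^j = -(S(x)u)^j \in L^2(\Omega_r)$. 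Standard interior and boundary elliptic regularity for the Laplacian on the smooth domain $\Omega_r$, applied componentwise and iterated in the usual bootstrap manner together with the smoothness of $S$, then upgrades $u$ to a classical solution satisfying the boundary condition in the sense of traces and, after a further step, pointwise.

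The main obstacle is really just verifying that the rescaling identity in step one transfers degeneracy faithfully (which is immediate once one writes $D^2_0\psi_r$ and $b_{h_r}$ side by side and notes that $\Phi_r$ is a bijection), and that the elliptic bootstrap in step two works for this \emph{indefinite} system; the latter is legitimate precisely because the operator $J\Delta$ is elliptic componentwise, so no joint ellipticity of the system is required. No novel analytical ingredient is needed beyond the Poincar\'e inequality (used already in Lemma \ref{lem:Fredholm}) and standard linear elliptic regularity.
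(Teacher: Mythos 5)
Your proof is correct. The paper explicitly leaves this lemma to the reader (calling it ``elementary''), so there is no reference proof to compare against, but your argument is precisely the one the authors intend: the rescaling identity $D^2_0\psi_r(u,v) = r^{n-2}\,b_{h_r}(u_r,v_r)$ transported through the linear homeomorphism $\Phi_r$ gives $(1)\Leftrightarrow(2)$, and for $(2)\Leftrightarrow(3)$ the key observation that $J\Delta$ acts as $\pm\Delta$ componentwise is exactly what makes the standard scalar elliptic bootstrap applicable despite the indefiniteness of the system. The only cosmetic point worth noting is that statement $(2)$ implicitly requires $u \neq 0$ (else it is vacuously true), which you correctly read into it.
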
 
Let us now assume that $1$ is not a conjugate radius, which implies that the quadratic form $h_1$ is non-degenerate. Since we see from Lemma \ref{lem:Fredholm} that $h_0$ is non-degenerate as well, the spectral flow $\sfl(h,[0,1])$ is defined, to which we refer as the generalised Morse index because of the following lemma.

\begin{lemma}\label{Morse=sfl}
If $J=-I_k$, then $\sfl(h,[0,1])=-\mu_{Morse}(h_1)$, which moreover is the number of eigenvalues $\lambda<0$ of 

\begin{equation}\label{bvplindef}
\left\{
\begin{aligned}
-\Delta u(x)+S(x)u(x)&=\lambda\,u(x),\quad x\in\Omega\\
u(x)&=0,\quad x\in\partial\Omega,
\end{aligned}
\right.
\end{equation}
counted with multiplicities.
\end{lemma}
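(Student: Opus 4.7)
The plan is to reduce the statement to property iv) of the spectral flow for paths in $Q_F^+(H)$ and then identify the Morse index of $h_1$ with the count of negative Dirichlet eigenvalues by a standard min-max argument.

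First, I would unpack what $J=-I_k$ means for the definitions. In this case $\nu=0$, so by the definition of the auxiliary function $\nu:\{0,\ldots,k\}\to\mathbb{Z}_2$ we have $\nu(j)=1$ for every $j=1,\ldots,k$, hence $(-1)^{\nu(j)}=-1$ throughout. Consequently,
\[
h_0(u)=\sum_{j=1}^{k}\int_\Omega |\nabla u^j|^2\,dx,\qquad h_r(u)=h_0(u)+\int_\Omega\langle S_r(x)u,u\rangle\,dx,
\]
and Poincar\'e's inequality shows that $h_0$ is positive definite on $H^1_0(\Omega,\mathbb{R}^k)$. In particular $\mu_{Morse}(h_0)=0$.

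Second, I would verify $h_r\in Q^+_F(H^1_0(\Omega,\mathbb{R}^k))$ for every $r\in[0,1]$. By Lemma~\ref{lem:Fredholm}, $h_r=h_0+r^2c_r$ with $c_r$ weakly continuous; hence the Riesz representation of $c_r$ is a compact self-adjoint operator and $L_{h_r}=L_{h_0}+r^2L_{c_r}$ is a positive invertible operator plus a compact self-adjoint perturbation. Its essential spectrum coincides with that of $L_{h_0}$ and is therefore bounded away from zero from below, so $L_{h_r}$ has at most finitely many non-positive eigenvalues, each of finite multiplicity. This gives $\mu_{Morse}(h_r)<\infty$, i.e.\ $h_r\in Q^+_F$. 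Since the endpoints are non-degenerate by assumption, property iv) of the spectral flow stated after the definition applies and yields
\[
\sfl(h,[0,1])=\mu_{Morse}(h_0)-\mu_{Morse}(h_1)=-\mu_{Morse}(h_1).
\]

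Third, I would identify $\mu_{Morse}(h_1)$ with the number of negative eigenvalues of the Dirichlet problem \eqref{bvplindef}. Since $S$ is smooth and symmetric, the operator $A=-\Delta+S$ with Dirichlet boundary conditions has a self-adjoint realisation on $L^2(\Omega,\mathbb{R}^k)$ with compact resolvent; let $\{e_i\}_{i\ge 1}$ be an $L^2$-orthonormal eigenbasis with eigenvalues $\lambda_1\leq\lambda_2\leq\cdots\to\infty$ and suppose $\lambda_1,\ldots,\lambda_N$ are the negative ones. Elliptic regularity gives $e_i\in H^1_0\cap H^2$, and for $u\in H^1_0$ with $L^2$-coefficients $c_i=\langle u,e_i\rangle_{L^2}$ one obtains, by density and Green's identity,
\[
h_1(u)=\sum_{i\ge 1}\lambda_i|c_i|^2 .
\]
On $\spa\{e_1,\ldots,e_N\}$ this form is negative definite, giving $\mu_{Morse}(h_1)\ge N$; conversely, if $V\subset H^1_0$ is a subspace on which $h_1$ is negative definite, then the $L^2$-orthogonal projection $P$ onto $\spa\{e_1,\ldots,e_N\}$ restricted to $V$ is injective (otherwise some nonzero $u\in V$ would have $c_1=\cdots=c_N=0$, forcing $h_1(u)\ge 0$), so $\dim V\leq N$. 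Hence $\mu_{Morse}(h_1)=N$, as required.

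The only mildly delicate step is the last one: making sure the eigenfunction expansion of $A$ can be used to compute $h_1(u)$ for a general element of $H^1_0$, not just $H^2\cap H^1_0$. This is handled by approximating $u$ in $H^1_0$ by smooth compactly supported functions and using continuity of $h_1$ on $H^1_0$; once that identity is established, the min-max argument above is routine and the lemma follows.
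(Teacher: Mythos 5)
Your proof is correct, and the overall skeleton (apply property iv)~of the spectral flow, then identify the Morse index of $h_1$ with the negative Dirichlet eigenvalue count) matches the paper; but the second half takes a genuinely different route. For the first half, the paper deduces $h_r\in Q^+_F(H)$ from the fact that $Q^+_F(H)$ is a path component and Lemma~\ref{lem:Fredholm} supplies a path from $h_0$, while you argue directly that $L_{h_r}$ is a compact self-adjoint perturbation of the positive invertible $L_{h_0}$, so its essential spectrum stays away from $(-\infty,0]$; these are equivalent but your version is slightly more explicit. The real divergence is in the identification of $\mu_{Morse}(h_1)$ with the number of negative eigenvalues of \eqref{bvplindef}. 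The paper follows Duistermaat: it introduces the equivalent scalar product $\langle\cdot,\cdot\rangle_\alpha$ on $H^1_0(\Omega,\mathbb{R}^k)$, observes that the Morse index is unchanged under such a replacement, and then tracks eigenvalues through the fractional-linear map $\lambda\mapsto\tfrac{\alpha\lambda}{1-\lambda}$, all without ever leaving $H^1_0$. You instead invoke the discrete $L^2$-spectrum of the self-adjoint Dirichlet realisation of $-\Delta+S$, expand $h_1(u)=\sum_i\lambda_i|c_i|^2$ on the form domain, and run a standard min-max dimension argument (negative definite on $\spa\{e_1,\ldots,e_N\}$, and any negative-definite subspace injects into it under the $L^2$-projection). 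Both are correct; yours is the more classical spectral-theoretic route and is arguably more transparent, while the paper's version stays entirely within the Hilbert space $H^1_0$ and sidesteps the need to justify the eigenfunction expansion of the quadratic form on the full form domain -- the one step you rightly flag as the delicate point of your argument.
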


\begin{proof}
If $J=-I_k$, then $h_0(u)>0$ for all $0\neq u\in H$, and consequently $h_r\in Q^+_F(H)$ since by Lemma \ref{lem:Fredholm} there is a path in $Q_F(H)$ joining $h_0$ and $h_r$. We obtain from property iv) in Section \ref{sect-sfl}

\[\sfl(h,[0,1])=\mu_{Morse}(h_0)-\mu_{Morse}(h_1)=-\mu_{Morse}(h_1),\]
where we use again that $h_0$ is positive definite. For the remaining claim, we argue similar as in \cite[Lemma 1.1]{Duistermaat} and let $\alpha>0$ be such that $\alpha I_k+S(x)$ is positive for all $x\in\Omega$. Then

\[\langle u,v\rangle_\alpha:=\sum^k_{j=1}{\int_\Omega{\langle \,\nabla u^j,\nabla v^j\rangle\,dx}}+\int_\Omega{\langle (\alpha I_k+S(x))u(x),v(x)\rangle\,dx},\quad u,v\in H^1_0(\Omega,\mathbb{R}^k),\]
is a scalar product on $H^1_0(\Omega,\mathbb{R}^k)$ which is equivalent to $\langle\cdot,\cdot\rangle_{H^1_0(\Omega,\mathbb{R}^k)}$. It is readily seen that the equality in \eqref{Morse} remains true if we replace the scalar product on $H$ by an equivalent one, and so $\mu_{Morse}(h_1)$ is equal to the total number of negative eigenvalues of the operator $L$ determined by

\begin{align*}
\langle Lu,u\rangle_\alpha=h_1(u)=\sum^k_{j=1}{\int_\Omega{\langle \,\nabla u^j,\nabla u^j\rangle\,dx}}+\int_\Omega{\langle S(x)u(x),u(x)\rangle\,dx},\quad u\in H^1_0(\Omega,\mathbb{R}^k). 
\end{align*}    
However, $\lambda<0$ is an eigenvalue of $L$ if and only if $\frac{\lambda\alpha}{1-\lambda}<0$ is an eigenvalue of \eqref{bvplindef}, which completes the proof. 
\end{proof}

Finally, let us point out that $\mu_{Morse}(h_1)=\infty$ if $J\neq -I_k$. Indeed, if $\nu\neq 0$, then clearly there exists an infinite dimensional subspace of $H^1_0(\Omega,\mathbb{R}^k)$ on which $h_0$ is negative definite. Consequently, $h_0\in Q^i_F(H)\cup Q^-_F(H)$, and moreover we see from Lemma \ref{lem:Fredholm} that $h_1$ lies in the same path component of $Q_F(H)$ than $h_0$.


\section{The Maslov index}\label{sect:Maslov}
As we have pointed out already in the introduction, it follows from Helfer's work \cite{Helfer} for geodesics in semi-Riemannian manifolds that conjugate radii of our equations \eqref{bvplin} may accumulate if $J\neq-I_k$, and hence they cannot just be counted as in \cite{Smale}. In this section we use a construction from \cite{BoossFurutani} to assign a Maslov-type index to the family of equations \eqref{bvplin}, which can be interpreted as a generalised counting of conjugate radii.\\  
For this purpose, we first need to introduce the von Neumann quotient of our equations (cf. \cite[\S XII.2]{DunfordSchwarz}) as a symplectic Hilbert space. Let us recall that the vectorial Laplacian 

\[\Delta:C^\infty(\Omega,\mathbb{R}^k)\rightarrow C^\infty(\Omega,\mathbb{R}^k),\quad \Delta u=(\Delta u^1,\ldots,\Delta u^k)\]
is closed and symmetric on the domain

\[D_{min}:=H^2_0(\Omega,\mathbb{R}^k)=\{u=(u^1,\ldots,u^k)\in H^2(\Omega,\mathbb{R}^k):u^j\mid_{\partial\Omega}=\partial_nu^j=0,\, j=1,\ldots,k\},\]
where

\begin{align}\label{normalder}
\partial_nu^j(x)=\sum^n_{i=1}{\frac{\partial u^j}{\partial x_i}(x)\nu^i(x)},\quad x\in\partial\Omega,
\end{align}
and $\nu=(\nu^1,\ldots,\nu^n)$ is the outward pointing normal vector to the boundary of $\Omega$, and $\partial_n$ the derivative with respect to $\nu$. In what follows, we denote by $\Delta_{J}$ the restriction of $J\Delta$ to $D_{min}$, and we let $\Delta^\ast_{J}$ be the adjoint of $\Delta_{J}$, i.e., the unique linear operator on the domain

\[D(\Delta^\ast_J)=\{v\in L^2(\Omega,\mathbb{R}^k):\, u\mapsto\langle\Delta_Ju,v\rangle_{L^2(\Omega,\mathbb{R}^k)}\, \text{is bounded on}\, D_{min}\}\]
determined by 

\[\langle\Delta_Ju,v\rangle_{L^2(\Omega,\mathbb{R}^k)}=\langle u,\Delta^\ast_Jv\rangle_{L^2(\Omega,\mathbb{R}^k)},\quad u\in D_{min},\quad v\in D(\Delta^\ast_J).\] 
It is well known (cf. \cite{Grubb}) that $\Delta^\ast_{J}$ is given by the operator $J\Delta$ on the domain

\[D(\Delta^\ast_J)=D_{max}:=\{u\in L^2(\Omega,\mathbb{R}^k):\, \Delta u\in L^2(\Omega,\mathbb{R}^k)\}.\]
If we consider on the latter space the graph scalar product

\[\langle u,v\rangle_{\Delta^\ast_J}=\langle u,v\rangle_{L^2(\Omega,\mathbb{R}^k)}+\langle \Delta^\ast_J u,\Delta^\ast_J v\rangle_{L^2(\Omega,\mathbb{R}^k)},\quad u,v\in D_{max},\]
then it is a Hilbert space and $D_{min}$ is a closed subspace of it. Consequently, the quotient space $\beta:=D_{max}/D_{min}$ is a Hilbert space, which is called the \textit{von Neumann space} of $\Delta_J$. In what follows, we denote by $\tau$ the quotient map from $D_{max}$ to $\beta$. We define a bilinear form on $\beta$ by

\[\omega:\beta\times\beta\rightarrow\mathbb{R},\quad \omega(\tau(u),\tau(v))=\langle \Delta^\ast_J u,v\rangle_{L^2(\Omega,\mathbb{R}^k)}-\langle u, \Delta^\ast_J v\rangle_{L^2(\Omega,\mathbb{R}^k)},\]
and we point out for later reference that for $u,v\in H^2(\Omega,\mathbb{R}^k)\subset D_{max}$

\begin{align}\label{omegaregular}
\omega(\tau[u],\tau[v])=\sum^k_{j=1}{(-1)^{\nu(j)}\int_{\partial\Omega}{ (\partial_nu^j)(x)\,v^j(x)\,dx}}-\sum^k_{j=1}{(-1)^{\nu(j)}\int_{\partial\Omega}{ u^j(x)\,(\partial_nv^j)(x)\,dx}},
\end{align}
which follows from integration by parts. In particular, $\omega$ vanishes on $H^2_0(\Omega,\mathbb{R}^k)$ and so it is well defined on $\beta$. From now on, we assume that the reader is familiar with the fundamental notions of symplectic Hilbert spaces and the Maslov index as presented in Appendix \ref{App:Maslov}. Let us recall the following lemma from \cite[\S 3.1]{BoossFurutani} in our setting for the convenience of the reader.

\begin{lemma}
$\omega$ is a symplectic form on $\beta$.
\end{lemma}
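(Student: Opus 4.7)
The plan is to check the three defining properties of a symplectic form in turn on the Hilbert space $\beta = D_{max}/D_{min}$: that $\omega$ descends to a well-defined bilinear form on the quotient, that this form is bounded and skew-symmetric, and finally that it is non-degenerate. The first three checks are essentially book-keeping exercises with the adjoint relation, while non-degeneracy is the substantive step and hinges on the fact that $\Delta_J$ is closed on $D_{min}$.

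To see well-definedness, I would show that replacing $u$ by $u+w$ with $w\in D_{min}$ does not change $\omega$. Since $\Delta_J$ is symmetric on $D_{min}$, its adjoint $\Delta^\ast_J$ extends $\Delta_J$, so for every $v\in D_{max}=D(\Delta^\ast_J)$ the defining property of the adjoint yields
\[\langle \Delta^\ast_J w,v\rangle_{L^2}-\langle w,\Delta^\ast_J v\rangle_{L^2}=\langle \Delta_J w,v\rangle_{L^2}-\langle w,\Delta^\ast_J v\rangle_{L^2}=0,\]
and the same computation in the second argument finishes the check. Skew-symmetry is immediate from the definition, and continuity follows from Cauchy-Schwarz with the graph scalar product, giving the bound $|\omega(\tau(u),\tau(v))|\leq 2\,\|u\|_{\Delta^\ast_J}\|v\|_{\Delta^\ast_J}$, so $\omega$ descends to a bounded skew-symmetric bilinear form on $\beta$.

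The main obstacle is non-degeneracy, i.e.\ the identification $D_{min}=\{u\in D_{max}:\omega(\tau(u),\tau(v))=0\ \text{for all}\ v\in D_{max}\}$. Suppose $u\in D_{max}$ satisfies $\omega(\tau(u),\tau(v))=0$ for every $v\in D_{max}$. Unfolding the definition, this reads
\[\langle u,\Delta^\ast_J v\rangle_{L^2}=\langle \Delta^\ast_J u,v\rangle_{L^2}\quad\text{for all}\ v\in D(\Delta^\ast_J),\]
which is precisely the statement that $u\in D((\Delta^\ast_J)^\ast)$ with $(\Delta^\ast_J)^\ast u=\Delta^\ast_J u$. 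Since the paper has already recalled that $\Delta_J$ is closed and symmetric on $D_{min}$, the standard functional-analytic identity $(\Delta^\ast_J)^\ast=\Delta_J^{\ast\ast}=\overline{\Delta_J}=\Delta_J$ applies, yielding $D((\Delta^\ast_J)^\ast)=D_{min}$. Hence $u\in D_{min}$, so $\tau(u)=0$, which is the required non-degeneracy and completes the plan.
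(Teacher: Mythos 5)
Your check that $\omega$ descends to a well-defined, bounded, skew-symmetric form on $\beta$ is fine, and your computation that the radical of $\omega$ is trivial (via $(\Delta^\ast_J)^\ast=\Delta_J^{\ast\ast}=\Delta_J$) is correct and a clean observation. However, what you prove is only that the canonical map $\beta\to\beta^\ast$, $\tau(u)\mapsto\omega(\cdot,\tau(u))$, is \emph{injective}. The paper's Appendix \ref{App:Maslov} explicitly requires the stronger (``strong'') non-degeneracy: that this map be \emph{bijective}. For a bounded skew-symmetric form on an infinite-dimensional Hilbert space, injectivity of this map does not imply surjectivity — the associated bounded skew-adjoint operator $A$ with $\omega(x,y)=\langle Ax,y\rangle_\beta$ can be injective with dense, non-closed range, in which case $0\in\sigma(A)$ and the map to the dual fails to be onto. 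And strong non-degeneracy is not a pedantic point here: it is exactly what makes $\Lambda(\beta)$ and $\FredL_\mu(\beta)$ behave as expected in the subsequent Maslov-index construction.

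The paper closes this gap by identifying $\beta$ with $D^\perp_{min}\subset D_{max}$ (orthogonal complement in the graph inner product $\langle\cdot,\cdot\rangle_{\Delta^\ast_J}$), characterizing this space as $\{u\in D_{max}:\Delta^\ast_J u\in D_{max},\ (\Delta^\ast_J)^2u=-u\}$, and showing that $\Delta^\ast_J$ restricts to an automorphism of $D^\perp_{min}$ with inverse $-\Delta^\ast_J$. With that in hand, one computes $\omega(\tau(u),\tau(v))=\langle\Delta^\ast_J u,v\rangle_{\Delta^\ast_J}$ for $u,v\in D^\perp_{min}$, so the Riesz map for the graph inner product conjugates $\tau(u)\mapsto\omega(\tau(u),\cdot)$ to the bijection $\Delta^\ast_J|_{D^\perp_{min}}$, giving bijectivity and not merely injectivity. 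To repair your proof you would need to supply this surjectivity step (or something equivalent); as it stands, the argument stops one step short of the stated conclusion.
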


\begin{proof}
Let us first point out that $\omega$ is skew-symmetric by definition, and its boundedness is readily seen from an elementary estimate. It remains to show that $\omega$ is non-degenerate.\\
As usual, we can identify $\beta$ with the orthogonal complement $D^\perp_{min}$ of $D_{min}$ in $D_{max}$ with respect to $\langle\cdot,\cdot\rangle_{\Delta^\ast_J}$, and now we first claim that

\begin{align}\label{Dperp}
D^\perp_{min}=\{u\in D_{max}:\, \Delta^\ast_Ju\in D_{max},\, (\Delta^\ast_J)^2u=-u\}.
\end{align}
Indeed, if we assume that $u\in D^\perp_{min}$, then 

\begin{align}\label{compomega}
\begin{split}
0&=\langle u,v\rangle_{L^2(\Omega,\mathbb{R}^k)}+\langle \Delta^\ast_Ju,\Delta^\ast_Jv\rangle_{L^2(\Omega,\mathbb{R}^k)}\\
&=\langle u,v\rangle_{L^2(\Omega,\mathbb{R}^k)}+\langle \Delta^\ast_Ju,\Delta_Jv\rangle_{L^2(\Omega,\mathbb{R}^k)},\quad v\in D_{min}.
\end{split}
\end{align} 
Consequently, $v\mapsto\langle\Delta^\ast_Ju,\Delta_Jv\rangle_{L^2(\Omega,\mathbb{R}^k)}$ is bounded on $D_{min}$ and so $\Delta^\ast_Ju\in D_{max}$. Moreover, we see that

\[\langle \Delta^\ast_Ju,\Delta_Jv\rangle_{L^2(\Omega,\mathbb{R}^k)}=\langle\Delta^\ast_J(\Delta^\ast_Ju),v\rangle_{L^2(\Omega,\mathbb{R}^k)}\]
and so \eqref{compomega} gives $\langle v,\Delta^\ast_J(\Delta^\ast_Ju)-u\rangle_{L^2(\Omega,\mathbb{R}^k)}=0$ for all $v\in D_{min}$. Since $D_{min}$ is dense in ${L^2(\Omega,\mathbb{R}^k)}$, this shows that $\Delta^\ast_J(\Delta^\ast_Ju)=-u$.\\
Conversely, let us assume that $u\in D_{max}$ belongs to the set on the right hand side of equation \eqref{Dperp} and that $v\in D_{min}$. Then

\[\langle u,v\rangle_{L^2(\Omega,\mathbb{R}^k)}+\langle\Delta^\ast_Ju,\Delta^\ast_Jv\rangle_{L^2(\Omega,\mathbb{R}^k)}=\langle u,v\rangle_{L^2(\Omega,\mathbb{R}^k)}+\langle\Delta^\ast_Ju,\Delta_Jv\rangle_{L^2(\Omega,\mathbb{R}^k)}=\langle u-u,v\rangle_{L^2(\Omega,\mathbb{R}^k)}=0\]
and so $u\in D^\perp_{min}$. This finishes the proof of \eqref{Dperp}.\\
We next claim that the restriction of $\Delta^\ast_J$ to $D^\perp_{min}$ is an automorphism. If $v:=\Delta^\ast_Ju$ for some $u\in D^\perp_{min}$, then clearly $v\in D_{max}$, $\Delta^\ast_Jv=-u\in D^\perp_{min}$ and $\Delta^\ast_J(\Delta^\ast_Jv)=-\Delta^\ast_Ju=-v$. Consequently, $v\in D^\perp_{min}$, and we conclude that $\Delta^\ast_J:D^\perp_{min}\rightarrow D^\perp_{min}$ is an isomorphism having as inverse $(\Delta^\ast_J)^{-1}=-\Delta^\ast_J$.\\
Finally, we note that for $u,v\in D^\perp_{min}$

\begin{align*}
\omega(\tau(u),\tau(v))&=\langle\Delta^\ast_Ju,v\rangle_{L^2(\Omega,\mathbb{R}^k)}-\langle u,\Delta^\ast_Jv\rangle_{L^2(\Omega,\mathbb{R}^k)}\\
&=\langle\Delta^\ast_Ju,v\rangle_{L^2(\Omega,\mathbb{R}^k)}+\langle\Delta^\ast_J\Delta^\ast_Ju,\Delta^\ast_Jv\rangle_{L^2(\Omega,\mathbb{R}^k)}\\
&=\langle \Delta^\ast_Ju,v\rangle_{\Delta^\ast_J},
\end{align*}
which shows that $\omega$ is non-degenerate on $\beta$, since $\Delta^\ast_J:D^\perp_{min}\rightarrow D^\perp_{min}$ is an isomorphism.
\end{proof}

In what follows, we use without further reference that the restriction of $\Delta^\ast_J$ to $H^2(\Omega,\mathbb{R}^k)\cap H^1_0(\Omega,\mathbb{R}^k)$ is a selfadjoint Fredholm operator with a purely discrete spectrum (cf. eg. \cite[Thm. 2.5.7]{Frey}).\\
We now set 

\[\mu:=\tau(H^2(\Omega,\mathbb{R}^k)\cap H^1_0(\Omega,\mathbb{R}^k))\subset\beta\]
and we claim that $\mu$ is a Lagrangian subspace. Indeed, we first see by \eqref{omegaregular} that $\mu$ is isotropic, i.e., $\mu\subset\mu^\circ$. On the other hand, if $\tau(u)\in\mu^\circ$ for some $u\in D_{max}$, then $\langle\Delta^\ast_Ju,v\rangle_{L^2(\Omega,\mathbb{R}^k)}=\langle u,\Delta^\ast_Jv\rangle_{L^2(\Omega,\mathbb{R}^k)}$ for all $v\in H^2(\Omega,\mathbb{R}^k)\cap H^1_0(\Omega,\mathbb{R}^k)$. Consequently, $v\mapsto\langle \Delta^\ast_J v,u\rangle_{L^2(\Omega,\mathbb{R}^k)}$ is bounded on $H^2(\Omega,\mathbb{R}^k)\cap H^1_0(\Omega,\mathbb{R}^k)$, which shows that $u\in H^2(\Omega,\mathbb{R}^k)\cap H^1_0(\Omega,\mathbb{R}^k)$ by the selfadjointness of the restriction of $\Delta^\ast_J$ to $H^2(\Omega,\mathbb{R}^k)\cap H^1_0(\Omega,\mathbb{R}^k)$. Hence $\tau(u)\in\mu$ and so $\mu^\circ\subset\mu$.\\
In what follows, we denote by a slight misuse of notation by $S_r$, $r\in[0,1]$, the bounded selfadjoint operator on $L^2(\Omega,\mathbb{R}^k)$ defined by

\[S_r:L^2(\Omega,\mathbb{R}^k)\rightarrow L^2(\Omega,\mathbb{R}^k),\quad (S_ru)(x)=r^2\, S(r\cdot x)u(x).\]
We consider the subspaces 

\[\ell(r)=\tau(\ker(\Delta^\ast_J+S_r))\subset\beta,\quad r\in[0,1],\]
and note that for $\tau(u),\tau(v)\in\ell(r)$, we have

\begin{align*}
\omega(\tau(u),\tau(v))&=\langle \Delta^\ast_J u,v\rangle_{L^2(\Omega,\mathbb{R}^k)}-\langle u, \Delta^\ast_J v\rangle_{L^2(\Omega,\mathbb{R}^k)}\\
&=\langle \Delta^\ast_J u,v\rangle_{L^2(\Omega,\mathbb{R}^k)}+\langle S_r u,v\rangle_{L^2(\Omega,\mathbb{R}^k)}\\
&-\langle u, \Delta^\ast_J v\rangle_{L^2(\Omega,\mathbb{R}^k)}-\langle u, S_r v\rangle_{L^2(\Omega,\mathbb{R}^k)}=0,
\end{align*}
which shows that $\ell(r)$ is an isotropic subspace of $\beta$. The task is now to define the Maslov index of the curve $\ell$ with respect to $\mu$. Before, let us note for later reference the \textit{unique continuation property} of the equations \eqref{bvplin}, which states that

\begin{align}\label{ucp}
D_{min}\cap\ker(\Delta^\ast_J+S_r)=\{0\},
\end{align}
or in other words, any solution of the Dirichlet boundary problem \eqref{bvplin} such that all normal derivatives \eqref{normalder} at the boundary are trivial, has to vanish on all of $\Omega$ (cf. \cite{Calderon}, \cite{Hormander}).

\begin{prop}\label{contell}
Each subspace $\ell(r)$, $r\in[0,1]$, belongs to $\mathcal{FL}_{\mu}(\beta)$, and the path $\ell:[0,1]\rightarrow\mathcal{FL}_{\mu}(\beta)$ is smooth.
\end{prop}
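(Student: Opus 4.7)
The plan is to verify three claims in sequence: that each $\ell(r)$ is Lagrangian, that $(\ell(r), \mu)$ is a Fredholm pair, and that $r \mapsto \ell(r)$ is smooth. The isotropy $\ell(r) \subset \ell(r)^\circ$ was already checked in the paragraph preceding the proposition, so the Lagrangian property reduces to the opposite inclusion. For $\tau(u) \in \ell(r)^\circ$ with $u \in D_{max}$, the symmetry of $S_r$ yields the simplification $\omega(\tau(u), \tau(v)) = \langle (\Delta^\ast_J + S_r) u, v\rangle_{L^2(\Omega, \mathbb{R}^k)}$ for every $v \in \ker(\Delta^\ast_J + S_r)$, so $w := (\Delta^\ast_J + S_r) u$ is orthogonal to $\ker(\Delta^\ast_J + S_r)$. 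The standard $H^2$-estimate for the Dirichlet Laplacian, together with the fact that $S_r$ is a compact perturbation from $H^2_0(\Omega, \mathbb{R}^k)$ into $L^2(\Omega, \mathbb{R}^k)$, shows that the minimal operator $(\Delta_J + S_r)|_{D_{min}}$ has closed range. Since the range of a closed densely defined operator, when closed, equals the orthogonal complement of the kernel of its adjoint, there exists $z \in D_{min}$ with $(\Delta_J + S_r) z = w$. Then $u - z \in \ker(\Delta^\ast_J + S_r)$ exhibits $\tau(u) = \tau(u - z) \in \ell(r)$, so $\ell(r)$ is Lagrangian.

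For the Fredholm pair property, the Dirichlet realization $A_D(r) := (\Delta^\ast_J + S_r)|_{H^2(\Omega, \mathbb{R}^k) \cap H^1_0(\Omega, \mathbb{R}^k)}$ is selfadjoint with compact resolvent by standard elliptic theory, hence Fredholm of index zero. The unique continuation property \eqref{ucp} identifies $\ell(r) \cap \mu$ with $\tau(\ker A_D(r))$, which is finite-dimensional. In a symplectic Hilbert space, two Lagrangian subspaces whose intersection is finite-dimensional automatically form a Fredholm pair, i.e., their sum is closed of finite codimension, so $\ell(r) \in \mathcal{FL}_\mu(\beta)$.

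For smoothness, observe that $r \mapsto S_r$ is a smooth curve in $\mathcal{L}(L^2(\Omega, \mathbb{R}^k))$, since $(S_r u)(x) = r^2 S(r \cdot x) u(x)$ depends smoothly on $r$ as a multiplication operator. Since smoothness is a local property, I would work near each fixed $r_0 \in [0,1]$: pick $\lambda_0$ in the resolvent set of $A_D(r_0)$ (which remains in the resolvent set of $A_D(r)$ for $r$ near $r_0$), form the smooth family $R(r) := (A_D(r) - \lambda_0)^{-1}$, and construct a Poisson-type operator $P(r) \colon H^{3/2}(\partial\Omega, \mathbb{R}^k) \to D_{max}$ mapping each Dirichlet datum on $\partial\Omega$ to the unique element of $\ker(\Delta^\ast_J + S_r)$ realising that datum. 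Then $\ell(r) = \tau(\im P(r))$, and the smoothness of $P(r)$ gives smoothness of the path $\ell$ in the Lagrangian Grassmannian; combined with the Fredholm pair property this upgrades to smoothness into $\mathcal{FL}_\mu(\beta)$. The principal technical hurdle I anticipate is the construction and smooth dependence of $P(r)$, which requires careful control of boundary traces for solutions of the perturbed elliptic system.
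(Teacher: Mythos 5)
Your treatment of the Lagrangian property is sound and actually more explicit than the paper's, which simply cites Proposition~3.5 of Booss-Bavnbek--Furutani for both the Lagrangian and Fredholm-pair claims; note, however, that the closed range of $(\Delta_J+S_r)|_{D_{min}}$ ultimately rests on the unique continuation property \eqref{ucp}, since it is triviality of the kernel that lets you extract the a priori bound $\|u\|_{L^2}\leq C\|(\Delta_J+S_r)u\|_{L^2}$ on $D_{min}$ by combining elliptic regularity with Rellich compactness, and this step deserves to be made explicit.

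The Fredholm-pair step contains a genuine gap: it is \emph{not} true in an infinite-dimensional symplectic Hilbert space that two Lagrangian subspaces with finite-dimensional intersection automatically form a Fredholm pair. In the model $H=V\oplus V$ with $\omega\bigl((a,b),(c,d)\bigr)=\langle a,d\rangle-\langle b,c\rangle$, take $\mu=V\oplus 0$ and $\eta=\{(x,Ax):x\in V\}$ for a bounded symmetric injective operator $A$ with dense non-closed range (say the diagonal operator with eigenvalues $1/n$ on $\ell^2$): both are Lagrangian, $\mu\cap\eta=\{0\}$, yet $\mu+\eta=V\oplus\im A$ is dense and not closed, so $(\mu,\eta)$ is not a Fredholm pair. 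What is automatic is that $(\mu+\eta)^\perp$ has the same dimension as $\mu\cap\eta$, hence the \emph{closure} of $\mu+\eta$ has finite codimension; the closedness of the sum itself, equivalently the finite codimension in Definition~\ref{def:Fredholmpair}, must be proved from the structure of the problem (which is what Proposition~3.5 of Booss-Bavnbek--Furutani does, exploiting the splitting of $D_{max}$ induced by \eqref{ucp} and the Fredholmness of the Dirichlet realization). Your smoothness argument is also different from the paper's and is left as a sketch at precisely the step you flag as hard. Beyond the issue you acknowledge, there is a regularity mismatch: elements of $\ker(\Delta^\ast_J+S_r)\subset D_{max}$ generally do not lie in $H^2(\Omega,\mathbb{R}^k)$, so their Dirichlet traces live in $H^{-1/2}(\partial\Omega,\mathbb{R}^k)$ rather than $H^{3/2}$; an $H^{3/2}$-Poisson operator would only parametrise a dense proper subspace of $\ell(r)$. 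The paper sidesteps trace theory altogether: near $r_0$ it shows that $F_r:D_{max}\to L^2(\Omega,\mathbb{R}^k)\oplus\ker(\Delta^\ast_J+S_{r_0})$, $u\mapsto((\Delta^\ast_J+S_r)u,P_{r_0}u)$, is an isomorphism, uses $F_r^{-1}F_{r_0}$ to transport kernels, patches this with the identity on $D_{min}$ and a complement to obtain a smooth family of isomorphisms of $D_{max}$ preserving $D_{min}$, and then descends to a smooth family of (orthogonalised) projections onto $\ell(r)$ in $\beta$, concluding by Lemma~\ref{regcurve}. If you want to pursue the Poisson-operator route you would need to reconstruct the trace calculus at the level of $D_{max}$, which is substantially more work than the abstract argument.
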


\begin{proof}
A complete proof of this proposition can be found in Proposition 3.5 and Theorem 3.8. of \cite{BoossFurutani} and here we do not want to repeat the argument that $\ell(r)$ is a Lagrangian subspace of $\beta$ and that $(\ell(r),\mu)$ is a Fredholm pair, which is elementary but rather technical. Instead we want to discuss the smoothness of the curve $\ell$ in $\mathcal{FL}_{\mu}(\beta)$ since this is a crucial point that was neglected in the approach to the Maslov index of \cite{DengJones} and \cite{AleDalbono}.\\
Let us fix an $r_0\in (0,1)$ and consider the map

\[F_r:D_{max}\rightarrow L^2(\Omega,\mathbb{R}^k)\oplus\ker(\Delta^\ast_{J}+S_{r_0}),\quad u\mapsto((\Delta^\ast_{J}+S_{r})u,P_{r_0}u),\] 
where $P_{r_0}$ denotes the orthogonal projection onto $\ker(\Delta^\ast_{J}+S_{r_0})$ in $L^2(\Omega,\mathbb{R}^k)$. Note that $\ker(\Delta^\ast_{J}+S_{r_0})$ is closed in $L^2(\Omega,\mathbb{R}^k)$ as it is the kernel of a closed operator. We claim that $F_{r_0}$ is bijective. Clearly, $F_{r_0}(u)=0$ implies $u\in \ker(\Delta^\ast_{J}+S_{r_0})$ and so $0=P_{r_0}u=u$. For the surjectivity of $F_{r_0}$, we first note that

\[(\im(\Delta^\ast_{J}+S_{r_0}))^\perp=(\im(\Delta_J+S_{r_0})^\ast)^\perp=\ker(\Delta_J+S_{r_0})=0,\] where we use the unique continuation property \eqref{ucp}. From this and the fact that the restriction of $\Delta^\ast_{J}+S_{r_0}$ to $H^2(\Omega,\mathbb{R}^k)\cap H^1_0(\Omega,\mathbb{R}^k)$ is Fredholm, it follows that $\im(\Delta^\ast_{J}+S_{r_0})=L^2(\Omega,\mathbb{R}^k)$. If now $y\in L^2(\Omega,\mathbb{R}^k)$ and $x\in\ker(\Delta^\ast_{J}+S_{r_0})$, then there exists $z\in D_{max}$ such that $(\Delta^\ast_{J}+S_{r_0})z=y$. We set $w:=P_{r_0}(z)-x\in\ker(\Delta^\ast_{J}+S_{r_0})$ and obtain $F_{r_0}(z-w)=((\Delta^\ast_{J}+S_{r_0})(z-w),P_{r_0}(z-w))=(y,x)$. Consequently, $F_{r_0}$ is bijective, and so $F_r$ is an isomorphism for all $r$ that are sufficiently close to $r_0$.\\
It is not very hard to show that $F^{-1}_r\circ F_{r_0}:D_{max}\rightarrow D_{max}$ maps $\ker(\Delta^\ast_{J}+S_{r_0})$ to $\ker(\Delta^\ast_{J}+S_{r})$, and moreover the space $V:=D_{min}+\ker(\Delta^\ast_{J}+S_{r_0})\subset D_{max}$ is closed. We define a family of maps 

\[\psi_r:V\oplus V^\perp\rightarrow D_{max},\quad (x+s)+y\mapsto x+(F^{-1}_r\circ F_{r_0})(s)+y,\]
where we use that $D_{min}\cap\ker(\Delta^\ast_{J}+S_{r_0})=\{0\}$ by \eqref{ucp}. Since $\psi_{r_0}=I_{D_{max}}$, we see that $\psi_r$ is an isomorphism for all $r$ that are sufficiently close to $r_0$, and moreover, $\psi_r(D_{min})=D_{min}$. Consequently, $\psi_r$ descends to a family of isomorphisms $\widetilde{\psi}_r:\beta\rightarrow\beta$ such that $\widetilde{\psi}_r(\ell(r_0))=\ell(r)$. We now let $\widetilde{P}_{r_0}$ denote the orthogonal projection onto $\ell(r_0)$ in $\beta$. Then $P_r=\widetilde{\psi}_r\widetilde{P}_{r_0}\widetilde{\psi}^{-1}_r:\beta\rightarrow\beta$ is a smooth family of projections such that $\im P_r=\ell(r)$. Finally, we set

\[P_{ort,r}:=P_rP^\ast_r(P_rP^\ast_r+(I_\beta-P^\ast_r)(I_\beta-P_r))^{-1},\]
which is by \cite[Lem. 12.8 a)]{BoossBuch} a smooth family of orthogonal projections in $\beta$ such that $\im(P_{ort,r})=\im P_r=\ell(r)$. Now the assertion follows from Lemma \ref{regcurve}.  
\end{proof}

It follows from \eqref{ucp} that $\ell(r)\cap\mu\neq \{0\}$ if and only if $r$ is a conjugate radius. Consequently, since $0$ is not conjugate by Lemma \ref{lem:Fredholm}, the Maslov index $\mu_{Mas}(\ell,\mu,[0,1])$ is defined whenever $1$ is not a conjugate radius. Note that, roughly speaking, $\mu_{Mas}(\ell, \mu, [0,1])$ counts radii for which the boundary value problems \eqref{bvplin} have non-trivial solutions (cf. App. \ref{App:Maslov}).


\section{Main theorems}
In this section we first state the main theorems of this paper, and afterwards we deduce as corollaries the Morse index theorem from Riemannian geometry and its generalisation to semi-Riemannian manifolds. For this latter part, we also recall the necessary definitions and constructions.

\begin{theorem}\label{mainI}
We assume that the boundary value problem \eqref{bvpnonlin} is non-degenerate, i.e., $r=1$ is not a conjugate radius for \eqref{bvplin}. Then

\[\sfl(h,[0,1])=\mu_{Mas}(\ell, \mu, [0,1])\in\mathbb{Z}.\]
\end{theorem}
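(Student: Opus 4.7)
My plan is to prove the equality via a crossing-form argument, showing that both sides count the same conjugate radii with the same local signs.

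The starting observation is that the two paths have a common set of crossings. By Lemma \ref{lemmaconj}, the crossings of $h$ (the degeneracy instants of $h_r$) are exactly the conjugate radii in $(0,1)$. On the Maslov side, using the unique continuation property \eqref{ucp}, the intersections $\ell(r)\cap\mu\neq\{0\}$ also characterise the conjugate radii, as already noted after Proposition \ref{contell}. Both paths are at least $C^1$: $h$ by construction, and $\ell$ by the smoothness statement in Proposition \ref{contell}. By the crossing formula for the spectral flow (Proposition \ref{crossformcomsfl}) and the analogous crossing formula for the Maslov index in $\mathcal{FL}_\mu(\beta)$ recalled in Appendix \ref{App:Maslov}, both indices are sums of signatures of crossing forms, provided all crossings are regular. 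By a small generic perturbation of $S$, which leaves both sides unchanged by homotopy invariance, I may assume this. It then suffices to identify the two crossing forms at every crossing $r_0\in(0,1)$.

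The first step is to identify the underlying finite-dimensional spaces. A vector $u\in H^1_0(\Omega,\mathbb{R}^k)$ lies in $\ker L_{h_{r_0}}$ iff it is a weak solution of $J\Delta u+S_{r_0}u=0$; elliptic regularity places $u$ in $H^2(\Omega,\mathbb{R}^k)\cap H^1_0(\Omega,\mathbb{R}^k)\cap\ker(\Delta^\ast_J+S_{r_0})$, and the quotient map $\tau$ then sends $u$ into $\ell(r_0)\cap\mu$. Injectivity of $\tau$ on this space is the unique continuation property \eqref{ucp}, and surjectivity is immediate from $\mu=\tau(H^2\cap H^1_0)$. Hence $\tau\colon \ker L_{h_{r_0}}\xrightarrow{\sim}\ell(r_0)\cap\mu$ is a linear isomorphism identifying the two crossing-form domains.

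For the spectral-flow crossing form, direct differentiation gives
\[
\Gamma(h,r_0)(u)=\dot{h}_{r_0}(u)=\int_\Omega\bigl\langle \bigl(2r_0 S(r_0 x)+r_0^2\,(x\cdot\nabla_x S)(r_0 x)\bigr)u(x),u(x)\bigr\rangle\,dx,\quad u\in\ker L_{h_{r_0}}.
\]
The change of variable $y=r_0 x$, $v(y)=u(y/r_0)\in H^1_0(\Omega_{r_0},\mathbb{R}^k)$ converts this into a bulk integral over $\Omega_{r_0}$ involving the strong solution $v$ of $J\Delta v+Sv=0$. On the Maslov side, the general crossing formula evaluates the crossing form at $\tau(u)\in\ell(r_0)\cap\mu$ as $\omega\bigl(\tau(u),\dot{\tau}(\tilde{u}_r)|_{r=r_0}\bigr)$, where $\tilde{u}_r\in\ker(\Delta^\ast_J+S_r)$ is the smooth extension of $u$ obtained by pulling back along the family of isomorphisms $\widetilde{\psi}_r$ built in the proof of Proposition \ref{contell}. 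Using \eqref{omegaregular}, this becomes a boundary integral over $\partial\Omega$ in $u$ and in the boundary trace of $\dot{\tilde{u}}_{r_0}$.

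The decisive step, and the main obstacle, is to match these two expressions. I expect to do this by a Rellich–Pohozaev-type identity for the indefinite operator $J\Delta+S$ on the star-shaped domain $\Omega_{r_0}$: pairing $J\Delta v+Sv=0$ with the radial multiplier $y\cdot\nabla v$ and integrating by parts converts the bulk integrand $\bigl\langle(2S(y)+y\cdot\nabla S(y))v,v\bigr\rangle$ into a boundary integral of the form $-\sum_{j}(-1)^{\nu(j)}\int_{\partial\Omega_{r_0}}\langle y,\nu(y)\rangle(\partial_n v^j)^2\,d\sigma(y)$, where star-shapedness gives $\langle y,\nu(y)\rangle>0$. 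A parallel computation, differentiating the pulled-back solution $\tilde{u}_r$ in $r$ and using the dilation $\Omega\mapsto\Omega_r$, shows that $\dot{\tilde{u}}_{r_0}$ has normal trace proportional to $\langle y,\nu(y)\rangle\,\partial_n v^j$, reproducing precisely the same boundary integrand on the Maslov side. Matching these boundary expressions identifies the two crossing forms on $\ker L_{h_{r_0}}\cong\ell(r_0)\cap\mu$; summing signatures over the finitely many regular crossings via Proposition \ref{crossformcomsfl} on one side and its Maslov counterpart on the other gives $\sfl(h,[0,1])=\mu_{Mas}(\ell,\mu,[0,1])$.
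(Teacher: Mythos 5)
Your plan follows the paper's strategy closely: both proofs compare the two indices crossing by crossing. The paper also first reduces to an interval $[r^\ast,1]$ on which the left endpoint is non-degenerate, identifies $\ker L_{h_{r_0}}\cong\ell(r_0)\cap\mu$ via $\tau$ (your observation, and the paper's \eqref{gammaiso}), and matches crossing forms as boundary integrals. Two remarks.

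First, the perturbation step is too vague to stand as written. You cannot simply invoke a ``small generic perturbation of $S$'' to obtain regular crossings; that would require a transversality theorem for the infinite-dimensional parameter space of coefficients, which you neither state nor prove. The paper instead perturbs the unbounded operators by a constant $\delta\, I_H$ and applies the Sard-type Theorem~\ref{thm:Sard}, which gives regular crossings for almost every $\delta$ in a small interval. Since $\delta I_H$ is constant in $r$ it does not contribute to the $r$-derivative, so the crossing-form computation is unaffected, and the homotopy invariances \eqref{hhast} and \eqref{masdelta} then make the perturbed indices equal to the unperturbed ones. You should replace the generic-perturbation appeal by this $\delta I_H$-argument.

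Second, your Rellich--Pohozaev route on the spectral-flow side is correct but a detour. The paper differentiates the dilated family $u_r(x)=u(rx/r_0)$ in $r$, pairs the differentiated equation with $u$, and integrates by parts once; this yields directly the compact boundary form $\Gamma(\mathcal{A}^\delta,r_0)[u]=r_0^{-2}\int_{\partial\Omega}\langle J\dot u(x),\partial_n u(x)\rangle\,dS$ of equation~\eqref{crossform}, with $\dot u=r_0^{-1}(D_xu)x$. The same family $u_r$ then feeds directly into the Maslov side, where the crossing form is $\omega(\tau(u),\tau(\dot u))$ and \eqref{omegaregular} produces the same boundary integral; this is what makes the identification immediate. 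Your multiplier identity yields the equivalent sign-structured form $\sum_j J_{jj}\int_{\partial\Omega}\langle x,\nu(x)\rangle(\partial_n u^j)^2\,dS$, which the paper reserves for the proof of Theorem~\ref{mainII}, where the definite sign is actually needed. Either route closes the argument; with the perturbation step fixed, your proposal is sound.
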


Our second theorem treats the case in which $J=-I_k$, so that \eqref{bvplin} is strongly elliptic. Let us introduce the notation  

\begin{align}\label{mr}
m(r)=\dim\{u\in C^2(\Omega_r,\mathbb{R}^k):\,u\,\text{solves}\,\,\eqref{bvplin}\}.
\end{align}
The following result gives a new proof of Smale's theorem \cite{Smale} for the equations \eqref{bvplin}, and it generalises corresponding approaches from \cite{AleIchDomain} and \cite{AleIchBall} to systems.

\begin{theorem}\label{mainII}
If \eqref{bvpnonlin} is non-degenerate and $J=-I_k$, then there are only finitely many conjugate instants in $(0,1)$ and

\[\sfl(h,[0,1])=-\mu_{Morse}(h_1)=-\sum_{r\in(0,1)}{m(r)}.\]
\end{theorem}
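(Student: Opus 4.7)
The first equality $\sfl(h,[0,1])=-\mu_{Morse}(h_1)$ is already contained in Lemma \ref{Morse=sfl}, so the real task is to prove $\sfl(h,[0,1])=-\sum_{r\in(0,1)}m(r)$ together with the finiteness of conjugate radii in $(0,1)$. The plan is to apply the crossing-form formula of Proposition \ref{crossformcomsfl} to the smooth path $r\mapsto h_r$: its endpoints are non-degenerate (by Lemma \ref{lem:Fredholm} for $r=0$ and by the standing hypothesis for $r=1$), and once I have verified that at every crossing $r_0\in(0,1)$ the crossing form $\Gamma(h,r_0)$ is \emph{negative definite} on $\ker L_{h_{r_0}}$, Proposition \ref{crossformcomsfl} gives the finiteness of crossings and simultaneously
\[
\sfl(h,[0,1])=\sum_{r_0\in(0,1)}\sgn\Gamma(h,r_0)=-\sum_{r_0\in(0,1)}\dim\ker L_{h_{r_0}}=-\sum_{r_0\in(0,1)}m(r_0),
\]
where the last identification is Lemma \ref{lemmaconj}.

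The crossing form is produced by differentiating $h_r(u)=\int_\Omega|\nabla u|^{2}\,dx+\int_\Omega\langle r^{2}S(r\cdot x)u,u\rangle\,dx$ in $r$ at $r_0$: for $u\in\ker L_{h_{r_0}}$ it reads
\[
\Gamma(h,r_0)(u)=\int_\Omega\big\langle\big(2r_0\,S(r_0 x)+r_0^{2}\,x\cdot\nabla S(r_0 x)\big)u,u\big\rangle\,dx.
\]
Such a $u$ is a weak, hence by elliptic regularity smooth, solution of $-\Delta u+r_0^{2}S(r_0 x)u=0$ in $\Omega$ vanishing on $\partial\Omega$, so that $w(y):=u(y/r_0)$ is a classical solution of \eqref{bvplin} on $\Omega_{r_0}$. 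The change of variables $y=r_0 x$ then rewrites the crossing form as
\[
\Gamma(h,r_0)(u)=r_0^{1-n}\int_{\Omega_{r_0}}\big\langle\big(2S(y)+y\cdot\nabla S(y)\big)w(y),w(y)\big\rangle\,dy.
\]
The central analytic ingredient will be a vector Pohozaev--Rellich identity: multiplying the $j$-th component of $-\Delta w+Sw=0$ by $y\cdot\nabla w^{j}$, summing in $j$, integrating over $\Omega_{r_0}$, and integrating by parts twice (using the symmetry of $S$ to combine $\sum_{i,j}S_{ij}w^{i}(y\cdot\nabla w^{j})$ with its transpose, and the Dirichlet condition to replace $\nabla w^{j}$ by $(\partial_n w^{j})\nu$ on $\partial\Omega_{r_0}$) should yield
\[
\int_{\Omega_{r_0}}\big\langle(2S+y\cdot\nabla S)w,w\big\rangle\,dy=-\int_{\partial\Omega_{r_0}}(y\cdot\nu(y))\sum_{j=1}^{k}(\partial_n w^{j})^{2}\,d\sigma.
\]

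Combining the last two displays, $\Gamma(h,r_0)(u)$ equals $-r_0^{1-n}$ times a non-negative boundary integral; star-shapedness of $\Omega_{r_0}$ at $0$ gives $y\cdot\nu(y)>0$ on $\partial\Omega_{r_0}$, so the integral vanishes only if every $\partial_n w^{j}$ vanishes on the whole boundary. Combined with $w|_{\partial\Omega_{r_0}}=0$, the unique continuation property \eqref{ucp} then forces $w\equiv 0$, which gives the desired negative definiteness and closes the proof. The step I expect to be the most delicate is the vector-valued Pohozaev identity itself: one must track signs carefully and, at the critical moment, eliminate the $\tfrac{n-2}{2}\int|\nabla w|^{2}$-contribution coming from the Laplacian by substituting the equation in the form $\int|\nabla w|^{2}=-\int\langle Sw,w\rangle$ so that the left-hand side collapses to $\int\langle(2S+y\cdot\nabla S)w,w\rangle$. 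The remaining ingredients — smoothness of the path, elliptic regularity of kernel elements, and the crossing-form formula — are standard.
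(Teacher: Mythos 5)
Your proposal is correct, and the overall strategy coincides with the paper's: identify the crossings of $h$ with the conjugate radii (Lemma \ref{lemmaconj}), show each crossing form is negative definite by converting it into a boundary integral and invoking star-shapedness together with the unique continuation property \eqref{ucp}, then read off the sum from the crossing-form formula. The route to the boundary formula, however, is genuinely different. The paper recycles the setup of the proof of Theorem \ref{mainI}: it passes to the unbounded operator $\mathcal{A}^0$ on $L^2$ over the truncated interval $[r^\ast,1]$ (needed because $\mathcal{A}^0_r$ carries a singular factor $1/r^2$ at $r=0$), obtains $\Gamma(\mathcal{A}^0,r_0)[u]=-\frac{1}{r_0^2}\int_{\partial\Omega}\langle\dot u,\partial_n u\rangle\,dS$ by differentiating the dilation family $u_r(x)=u(\frac{r}{r_0}x)$ along the equation and integrating by parts, and then kills the tangential part of the integrand by decomposing $x=\langle x,\nu\rangle\nu+x^t$ and applying the divergence theorem on $\partial\Omega$. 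You instead stay with the bounded form $h$ on $H^1_0(\Omega,\mathbb{R}^k)$ on all of $[0,1]$, compute $\Gamma(h,r_0)$ directly from the definition as a volume integral involving $2S+y\cdot\nabla S$, and convert it to the boundary integral via the explicit vector Pohozaev/Rellich identity. I checked that identity: multiplying each component of $-\Delta w+Sw=0$ by $y\cdot\nabla w^j$, applying the scalar Rellich identity component-wise, using the symmetry of $S$ to rewrite $\sum_{i,j}\int S_{ij}w^i(y\cdot\nabla w^j)=-\frac12\int\langle(nS+y\cdot\nabla S)w,w\rangle$, and eliminating $\frac{n-2}{2}\sum_j\int|\nabla w^j|^2$ by the energy identity $\sum_j\int|\nabla w^j|^2=-\int\langle Sw,w\rangle$ does indeed give $\int_{\Omega_{r_0}}\langle(2S+y\cdot\nabla S)w,w\rangle\,dy=-\int_{\partial\Omega_{r_0}}(y\cdot\nu)\sum_j(\partial_n w^j)^2\,d\sigma$. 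At bottom the two derivations are the same Pohozaev computation in disguise — $\dot u=\frac{1}{r_0}x\cdot\nabla u$ is precisely the dilation multiplier — but your presentation is self-contained, avoids the detour through the unbounded operator, and needs no restriction to $[r^\ast,1]$, while the paper's gains economy by re-using the machinery already in place for Theorem \ref{mainI}.
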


For stating two corollaries of Theorem \ref{mainI} and Theorem \ref{mainII}, we first briefly recall some definitions and constructions for geodesics in semi-Riemannian manifolds, for which we refer to \cite{MPP} for a more detailed exposition. Let $(M,g)$ be a semi-Riemannian manifold of dimension $k$ and index $\nu$ having Levi Civita connection $\nabla$ and curvature $R$. If we fix two points $p,q\in M$, then the set $H^1_{pq}([0,1],M)$ of all curves of regularity $H^1$ that join $p$ and $q$ is a Hilbert manifold (cf. \cite{Klingenberg}). The functional

\[\mathcal{E}:H^1_{pq}([0,1],M)\rightarrow\mathbb{R},\quad \gamma\mapsto\int^1_0{g_{\gamma(t)}(\gamma'(t),\gamma'(t))\, dt}\]
is smooth and its critical points are the geodesics joining $p$ and $q$, i.e., the curves $\gamma:[0,1]\rightarrow M$ from $p$ to $q$ that satisfy the differential equation $\frac{\nabla}{dt}\gamma'=0$.\\
Let us now fix a a critical point $\gamma\in H^1_{pq}([0,1],M)$ of $\mathcal{E}$. The tangent space $T_{\gamma}H^1_{pq}([0,1],M)$ can be identified canonically with the set of all vector fields along $\gamma$ which are of regularity $H^1$ and vanish at their endpoints. The Hessian of the functional $\mathcal{E}$ at the critical point $\gamma$ is the bilinear form $D^2_\gamma\mathcal{E}:T_{\gamma}H^1_{pq}([0,1],M)\times T_{\gamma}H^1_{pq}([0,1],M)\rightarrow\mathbb{R}$ given by

\[D^2_\gamma\mathcal{E}(\xi,\eta)=\int^1_0{g\left(\frac{\nabla}{dx}\xi(x),\frac{\nabla}{dx}\eta(x)\right)\,dx}+\int^1_0{g(R(\gamma'(x),\xi(x))\gamma'(x),\eta(x))\, dx},\]
and the geodesic $\gamma$ is called \textit{non-degenerate} if $D^2_\gamma\mathcal{E}$ is non-degenerate.\\
If we now choose a parallel orthonormal frame $\{e^1,\ldots,e^k\}$ along $\gamma$, we can identify vector fields $\xi$ in $T_\gamma H^1_{pq}([0,1],M)$ with maps $u$ in $H^1_0([0,1],\mathbb{R}^k)$ by

\begin{align}\label{baserep}
\xi(x)=\sum^k_{i=1}{u_i(x)\,e^i(x)}, \quad x\in [0,1].
\end{align}
Under this identification, the quadratic form induced by $D^2_\gamma\mathcal{E}$ transforms to 

\[h(u)=-\int^1_0{\langle Ju'(x),u'(x)\rangle\,dx}+\int^1_0{\langle S(x)u(x),u(x)\rangle\,dx}\]
on $H^1_0([0,1],\mathbb{R}^k)$, where $S$ denotes the smooth path of symmetric matrices having components

\[S_{ij}(x)=g(R(\gamma'(x),e^i(x))\gamma'(x),e^j(x)),\quad x\in [0,1],\,\,1\leq i,j\leq k,\]
and 

\[J=\diag(\underbrace{-1,\ldots,-1}_{k-\nu},\underbrace{1,\ldots,1}_\nu)\]
as in \eqref{J}. If $(M,g)$ is a Riemannian manifold, then $J=-I_k$ and the finite number $\mu_{Morse}(h)$ is by definition the Morse index of the geodesic $\gamma$ (cf. \cite[\S 15]{MilnorMorse}).\\
If we now consider the restricted domain $\Omega_r=[0,r]$, we obtain as in Section \ref{section:genMorse} a family of quadratic forms by

\begin{align}\label{famh}
h_r(u)=-\int^1_0{\langle Ju'(x),u'(x)\rangle\,dx}+\int^1_0{\langle S_r(x)u(x),u(x)\rangle\,dx},\quad u\in H^1_0([0,1],\mathbb{R}^k),
\end{align}
where $S_r(x)=r^2S(r\cdot x)$. The kernel $\ker L_r$ of the associated Riesz representation \eqref{Rieszrep} of $h_r$ consists of all functions that satisfy the boundary value problem

\begin{equation}\label{bvp}
\left\{
\begin{aligned}
Ju''(x)+\,S_r(x)u(x)&=0,\,\,x\in [0,1],\\
u(0)=u(1)&=0.
\end{aligned}
\right.
\end{equation}
From \eqref{baserep}, it is readily seen that the space of all solutions of \eqref{bvp} is isomorphic to the space of all vector fields $\xi$ in $T_\gamma H^1_{pq}([0,1],M)$ that satisfy

\begin{align}\label{Jacobi}
\frac{\nabla^2}{dx^2}\xi(x)+R(\gamma'(x),\xi(x))\gamma'(x)=0
\end{align}
and vanish at $0$ and $r$. Equation \eqref{Jacobi} is called \textit{Jacobi equation}, and $r$ is a conjugate instant if

\[\mathscr M(r):=\dim\left\{\xi\in T_\gamma H^1_{pq}(I,M):\, \frac{\nabla^2}{dx^2}\xi(x)+R(\gamma'(x),\xi(x))\gamma'(x)=0,\quad \xi(0)=\xi(r)=0\right\}>0.\]
Since we just have seen that $\mathscr M(r)$ coincides with the dimension $m(r)$ of the space of solutions of \eqref{bvp}, we immediately obtain from Theorem \ref{mainII} the following corollary, which is the well known \textit{Morse index theorem in Riemannian geometry} (cf. \cite[\S 15]{MilnorMorse}).

\begin{cor}\label{corMorse}
If $\gamma$ is a non-degenerate geodesic in a Riemannian manifold $(M,g)$, then

\[\mu_{Morse}(\gamma)=\sum_{r\in (0,1)}{\mathscr M(r)}.\]
\end{cor}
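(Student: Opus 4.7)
My plan is to derive Corollary \ref{corMorse} as an essentially immediate consequence of Theorem \ref{mainII}, using the dictionary between geodesic data and the equations on $\Omega=[0,1]$ that has already been set up in the paragraphs preceding the statement. The non-trivial input has already been done: the parallel orthonormal frame identifies $T_\gamma H^1_{pq}([0,1],M)$ with $H^1_0([0,1],\mathbb{R}^k)$ and carries the Hessian $D^2_\gamma\mathcal{E}$ to the quadratic form $h$ with $J=-I_k$, so the setting is exactly the one-dimensional, strongly elliptic case of Section \ref{section:genMorse}.

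First I would observe that, under the parallel-frame identification, $h$ coincides with the quadratic form $h_1$ from the family \eqref{famh}, so by definition of the Riemannian Morse index one has
\[
\mu_{Morse}(\gamma)=\mu_{Morse}(h)=\mu_{Morse}(h_1).
\]
Non-degeneracy of $\gamma$ in the sense of $D^2_\gamma\mathcal{E}$ being non-degenerate then translates, through the same identification, into the non-degeneracy of $h_1$, which is precisely the hypothesis that $r=1$ is not a conjugate radius for the linearised equation on $\Omega=[0,1]$. Hence the non-degeneracy hypothesis of Theorem \ref{mainII} is in force.

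Next I would invoke the identification $\mathscr{M}(r)=m(r)$ that was spelled out right before the statement: the Jacobi fields vanishing at $0$ and $r$ correspond under \eqref{baserep} exactly to the $H^1_0$-solutions of the boundary value problem \eqref{bvp}, and by elliptic regularity in one dimension these are classical $C^2$-solutions. With this identification in hand, Theorem \ref{mainII} applied to the path $h_r$ yields
\[
\sfl(h,[0,1])=-\mu_{Morse}(h_1)=-\sum_{r\in(0,1)}m(r).
\]
Reading off the two outer equalities and combining them with the identifications above gives
\[
\mu_{Morse}(\gamma)=\mu_{Morse}(h_1)=\sum_{r\in(0,1)}m(r)=\sum_{r\in(0,1)}\mathscr{M}(r),
\]
which is the claim. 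There is no real obstacle here; the only point worth double-checking is that the boundary value problem \eqref{bvp} for $u$ on $[0,1]$ truly captures Jacobi fields on $[0,r]$ (rather than, naively, on $[0,1]$), but this is exactly the content of the change of variables $x\mapsto r\cdot x$ that produced the family $S_r(x)=r^2 S(r\cdot x)$ in \eqref{famh}, together with the elementary equivalence already recorded in Lemma \ref{lemmaconj}.
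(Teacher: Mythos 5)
Your proposal is correct and follows exactly the route the paper intends: the paper explicitly states that Corollary \ref{corMorse} is an immediate consequence of Theorem \ref{mainII} via the identification $\mathscr{M}(r)=m(r)$ and the translation of non-degeneracy of $\gamma$ into $r=1$ not being a conjugate radius, which is precisely what you spell out. There is nothing to add beyond the bookkeeping you have carefully carried through.
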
 

If $J\neq-I_k$, i.e., $(M,g)$ is not Riemannian, then $\mu_{Morse}(h)$ is infinite. Moreover, it was exposed by Helfer in \cite{Helfer} that conjugate points may accumulate in this case and so the indices in Corollary \ref{corMorse} are not defined in general. As observed in \cite{MPP}, a suitable generalisation of $\mu_{Morse}(\gamma)$ in the semi-Riemannian case is the spectral flow of the family \eqref{famh}. A possible way to overcome the problem of counting conjugate points along the geodesic is by using the Maslov index for curves of Lagrangian subspaces in $\mathbb{R}^{2k}$ as follows: we set $v=Ju'$ and see that the differential equations in \eqref{bvp} transform to the linear Hamiltonian systems

\begin{align}\label{Hamiltonian}
\begin{pmatrix}
u'\\
v'	
\end{pmatrix}=\sigma\begin{pmatrix}
-S_r(x)&0\\
0&-J	
\end{pmatrix}\begin{pmatrix}
	u\\
	v
\end{pmatrix},
\end{align} 
where 

\[\sigma=\begin{pmatrix}
	0&-I_k\\
	I_k&0
\end{pmatrix}\]
is the standard symplectic matrix. If $\Psi_r$ denotes the fundamental solution of \eqref{Hamiltonian}, that is, the unique matrix-valued solution that satisfies $\Psi_r(0)=I_{2k}$, then $\Psi_r(x)$ is symplectic for all $(r,x)\in[0,1]\times[0,1]$. Moreover, it follows immediately from the definition that $\Psi_r(1)(\{0\}\times\mathbb{R}^k)\cap(\{0\}\times\mathbb{R}^k)\neq\{0\}$ if and only if the boundary value problem \eqref{bvp} has a non-trivial solution, which means that $r$ is a conjugate instant. The \textit{Maslov index} $\mu_{Mas}(\gamma)$ of the geodesic $\gamma$ is defined as the Maslov index $\mu_{Mas}(\Psi_\cdot(1)(\{0\}\times\mathbb{R}^k),\{0\}\times\mathbb{R}^k,[0,1])$, where $\Psi_\cdot(1)(\{0\}\times\mathbb{R}^k)$ denotes the path $\ell(r)=\Psi_r(1)(\{0\}\times\mathbb{R}^k)\in\Lambda(\mathbb{R}^{2k})$ (cf. App. \ref{App:Maslov}). We prove below the following corollary of Theorem \ref{mainI}, which is the \textit{Morse index theorem for geodesics in semi-Riemannian manifolds} \cite[Prop. 6.1]{MPP} and a generalisation of the previous Corollary \ref{corMorse}.

\begin{cor}\label{maincor}
If $\gamma$ is a non-degenerate geodesic in a semi-Riemannian manifold $(M,g)$, then

\[\mu_{Morse}(\gamma)=\mu_{Mas}(\gamma).\]
\end{cor}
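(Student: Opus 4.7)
The plan is to derive Corollary \ref{maincor} from Theorem \ref{mainI} by specialising to the one-dimensional case $\Omega=[0,1]$ and then performing a symplectic reduction from the (now finite-dimensional) von Neumann space $\beta$ onto the $2k$-dimensional symplectic space used to define $\mu_{Mas}(\gamma)$. Recall that, as discussed just before the statement, $\mu_{Morse}(\gamma)$ in the semi-Riemannian case is by convention the generalised Morse index $\sfl(h,[0,1])$ of the family \eqref{famh}, so Theorem \ref{mainI} immediately yields $\mu_{Morse}(\gamma)=\mu_{Mas}(\ell,\mu,[0,1])$. The task is therefore to identify this abstract Maslov index with the concrete finite-dimensional Maslov index $\mu_{Mas}(\Psi_\cdot(1)(\{0\}\times\mathbb{R}^k),\{0\}\times\mathbb{R}^k,[0,1])$.

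First I would unpack $\beta$ in the case $n=1$: since $D_{min}=\{u\in H^2:u(0)=u(1)=u'(0)=u'(1)=0\}$ and $D_{max}=H^2([0,1],\mathbb{R}^k)$, the quotient $\beta$ is isomorphic to $\mathbb{R}^{4k}$ via the boundary data map $\tau[u]\mapsto(u(0),u'(0),u(1),u'(1))$. Using \eqref{omegaregular} and the fact that in $1$D the outward normal is $-1$ at $0$ and $+1$ at $1$, the symplectic form $\omega$ becomes
\[
\omega((a_0,b_0,a_1,b_1),(a_0',b_0',a_1',b_1'))=\sum_{j=1}^k(-1)^{\nu(j)}\bigl[(b_1^ja_1'^j-a_1^jb_1'^j)+(b_0^ja_0'^j-a_0^jb_0'^j)\bigr].
\]
Substituting $(a,b)\mapsto(a,Jb)$ turns this into the direct sum $\sigma_1\oplus(-\sigma_0)$ of two copies of the standard symplectic form on $\mathbb{R}^{2k}$ (with a sign flip at the $x=0$ endpoint). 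Under this identification the Dirichlet Lagrangian becomes
\[
\mu\cong\bigl(\{0\}\times\mathbb{R}^k\bigr)\oplus\bigl(\{0\}\times\mathbb{R}^k\bigr),
\]
and, setting $v=Ju'$, the Cauchy data space $\ell(r)$ becomes precisely the graph of the fundamental solution $\Psi_r(1)$ of the Hamiltonian system \eqref{Hamiltonian}, regarded as a subspace of $\mathbb{R}^{2k}\oplus\mathbb{R}^{2k}$.

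The heart of the argument is then the standard symplectic reduction/graph trick: if $L_0=\{0\}\times\mathbb{R}^k\subset\mathbb{R}^{2k}$ and $\mathrm{gr}(\Psi_r(1))\subset(\mathbb{R}^{2k},-\sigma_0)\oplus(\mathbb{R}^{2k},\sigma_1)$, then
\[
\mu_{Mas}\bigl(\mathrm{gr}(\Psi_\cdot(1)),\,L_0\oplus L_0,\,[0,1]\bigr)=\mu_{Mas}\bigl(\Psi_\cdot(1)L_0,\,L_0,\,[0,1]\bigr),
\]
which is exactly $\mu_{Mas}(\gamma)$ by definition. This equality can be established through a crossing-form computation: at a crossing $r_*$ the intersection $\mathrm{gr}(\Psi_{r_*}(1))\cap(L_0\oplus L_0)$ is canonically isomorphic to $\Psi_{r_*}(1)L_0\cap L_0$, and a direct differentiation shows that the crossing forms coincide under this isomorphism, so that Proposition \ref{crossformcomsfl} (applied in both spaces, possibly after a small homotopy perturbing to regular crossings) yields the equality of Maslov indices. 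Combining this with Theorem \ref{mainI} completes the proof.

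The main obstacle is the reduction step in the last paragraph: one has to verify that the symplectic isomorphism $\beta\cong(\mathbb{R}^{2k},-\sigma_0)\oplus(\mathbb{R}^{2k},\sigma_1)$ is set up with the correct signs (those coming from $J$ and from the orientation of the normal at $0$), and then confirm that the pullback of the standard Maslov index on $\Lambda(\mathbb{R}^{2k})$ under the graph construction agrees with the Maslov index on $\mathcal{FL}_\mu(\beta)$. Once the crossing forms are written down explicitly and matched, everything else is either Theorem \ref{mainI} or routine bookkeeping.
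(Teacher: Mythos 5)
Your proposal is correct and follows essentially the same route as the paper: pass to $n=1$, trade the von Neumann quotient $\beta$ for $\mathbb{R}^{2k}\oplus\mathbb{R}^{2k}$ via a boundary trace map (the paper takes $\tau[u]\mapsto((u(0),Ju'(0)),(u(1),Ju'(1)))$, which is your map composed with your substitution $(a,b)\mapsto(a,Jb)$), recognise $\widetilde{\ell}(r)$ as $\gra\Psi_r(1)$, and reduce to $\mu_{Mas}(\gamma)$ by the graph formula \eqref{RobSalgra} — which the paper simply cites from \cite{Robbin-SalamonMAS} rather than re-deriving via crossing forms as you suggest. One small slip worth noting: in your displayed formula for $\omega$ the two endpoint contributions enter with the same sign, but since the outward normal at $x=0$ is $-1$ the $0$-endpoint term must carry the opposite sign to the $1$-endpoint term; you already flag this sign bookkeeping as the main thing to check, and once corrected the identification with $-\omega_2=-(\omega_0\times(-\omega_0))$ used in the paper comes out as intended.
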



\section{Proofs}
In this section we prove Theorem \ref{mainI}, Theorem \ref{mainII} and Corollary \ref{maincor}. Note that Corollary \ref{corMorse} is an immediate consequence of Theorem \ref{mainII} and hence does not need to be proved.


\subsection{Proof of Theorem \ref{mainI}}\label{sect:proofI}
We begin by recalling that $h_0$ is non-degenerate by Lemma \ref{lem:Fredholm}. Consequently, there exists $r^\ast>0$ such that $h_r$ is non-degenerate for all $r\in[0,r^\ast]$ and properties ii) and iii) in Section \ref{sect-sfl} show that $\sfl(h,[0,1])=\sfl(h,[r^\ast,1])$. On the other side, we deduce from Lemma \ref{lemmaconj} that $\ell(r)\cap\mu=\{0\}$ for all $r\in[0,r]$, and hence $\mu_{Mas}(\ell,\mu,[0,1])=\mu_{Mas}(\ell,\mu,[r^\ast,1])$ by i) and ii) in Appendix \ref{App:Maslov}. So in what follows, we may restrict to the interval $[r^\ast,1]$.\\ 
We define for $r\in[r^\ast,1]$ quadratic forms $h^\delta_r:H^1_0(\Omega,\mathbb{R}^k)\rightarrow\mathbb{R}$ by

\begin{align*}
h^\delta_r(u)&=\frac{1}{r^2}h_r(u)+\delta\|u\|^2_{L^2(\Omega,\mathbb{R}^k)}\\
&=-\frac{1}{r^2}\sum^k_{j=1}{(-1)^{\nu(j)}\int_\Omega{\langle \nabla u^j,\nabla u^j\rangle\,dx}}+\int_\Omega{\langle S(r\cdot x)u(x),u(x)\rangle\,dx}+\delta\int_\Omega{\langle u(x),u(x)\rangle\,dx},
\end{align*}
and we consider the unbounded selfadjoint Fredholm operators

\[\mathcal{A}^\delta_r:L^2(\Omega,\mathbb{R}^k)\supset D\rightarrow L^2(\Omega,\mathbb{R}^k),\quad \mathcal{A}^\delta_ru=\frac{1}{r^2}\,J\Delta u(x)+S(r\cdot x)u(x)+\delta\,u(x)\]
on the domain $D=H^2(\Omega,\mathbb{R}^k)\cap H^1_0(\Omega,\mathbb{R}^k)$. From integration by parts, we see that $h^\delta_r$ is non-degenerate if and only if $\mathcal{A}^\delta_r$ is invertible, and moreover

\begin{align}\label{crossformh}
h^\delta_r(u)=\langle\mathcal{A}^\delta_ru,u\rangle_{L^2(\Omega,\mathbb{R}^k)},\quad u\in D,\,\, (r,\delta)\in[r^\ast,1]\times\mathbb{R},
\end{align}
Since $h_{r^\ast}$ and $h_1$ are non-degenerate, there is $\delta^\ast>0$ such that $h^\delta_{r^\ast}$ and $h^\delta_1$ are non-degenerate for all$|\delta|<\delta^\ast$. It follows from property i) of the spectral flow in Section \ref{sect-sfl} that 

\begin{align}\label{hhast}
\sfl(h,[r^\ast,1])=\sfl(h^\delta,[r^\ast,1]),\quad\text{for all}\, \delta\in[-\delta^\ast,\delta^\ast].
\end{align}
Since $\mathcal{A}^\delta_r$ is invertible if $h^\delta_r$ is non-degenerate, we see that $\mathcal{A}^\delta_{r^\ast}$ and $\mathcal{A}^\delta_1$ are invertible for all $\delta\in[-\delta^\ast,\delta^\ast]$, and consequently, $\sfl(\mathcal{A}^\delta,[r^\ast,1])$ is defined for all these $\delta$. By Theorem \ref{thm:Sard} there exists $\delta\in[-\delta^\ast,\delta^\ast]$ such that $\mathcal{A}^\delta$ has only regular crossings in $[r^\ast,1]$. Since the crossing forms of $h^\delta$ and $\mathcal{A}^\delta$ coincide by \eqref{crossformh}, we see by \eqref{hhast} and Theorem \ref{thm:crossing} that

\begin{align}\label{crossformA}
\sfl(h,[r^\ast,1])=\sum_{r\in(r^\ast,1)}{\sgn\Gamma(\mathcal{A}^\delta,r)}.
\end{align} 
Let us now consider a regular crossing $r_0\in (r^\ast,1)$. The crossing form is by definition

\begin{align*}
\begin{split}
\Gamma(\mathcal{A}^\delta,r_0):\ker\mathcal{A}^\delta_{r_0}\rightarrow\mathbb{R},\quad \Gamma(\mathcal{A},r_0)[u]&=-\frac{2}{r^3_0}\int_{\Omega}{\langle J(\Delta u)(x),u(x)\rangle\, dx}\\
&+\frac{d}{dr}\mid_{r=r_0}\int_\Omega{\left\langle S(r\cdot x)u(x),u(x)\right\rangle\,dx}.
\end{split}
\end{align*}
Let $u=(u^1,\ldots,u^k):\Omega\rightarrow\mathbb{R}^k$ be an element of $\ker\mathcal{A}^\delta_{r_0}$, i.e.,

\begin{align}\label{equ}
\frac{1}{r^2_0}J(\Delta u)(x)+S(r_0\cdot x)u(x)+\delta u(x)=0,\quad x\in \Omega.
\end{align}
We define for $r\in(0,r_0]$ functions $u_r:\Omega\rightarrow\mathbb{R}^k$ by 

\[u_r(x)=u\left(\frac{r}{r_0}x\right).\]
We note that $u_{r_0}=u$, and

\begin{align}\label{equII}
\begin{split}
&\frac{1}{r^2}(J\Delta u_r)(x)+S(r\cdot x)u_r(x)+\delta u_r(x)=\frac{1}{r^2_0}J\Delta u\left(\frac{r}{r_0}x\right)+S(r\cdot x)u\left(\frac{r}{r_0}x\right)+\delta u_r(x)\\
&=\frac{1}{r^2_0}J\Delta u\left(\frac{r}{r_0}x\right)+S(r_0\frac{r}{r_0}\cdot x)u\left(\frac{r}{r_0}x\right)+\delta u\left(\frac{r}{r_0}x\right)=0,\quad x\in\Omega.
\end{split}
\end{align}
Let us set for notational convenience

\begin{align}\label{dotu}
\dot{u}(x):=\frac{d}{dr}\mid_{r=r_0}u_r(x)=\frac{1}{r_0}(D_xu)x=\frac{1}{r_0}(\langle\nabla u^1(x),x\rangle,\ldots,\langle\nabla u^k(x),x\rangle)^T\in\mathbb{R}^k,
\end{align}
where $\cdot{}^T$ denotes the transpose. If we differentiate \eqref{equII} by $r$ and evaluate at $r=r_0$, we obtain

\[-\frac{2}{r^3_0}J\Delta u(x)+\frac{1}{r^2_0}J\Delta \dot{u}(x)+\frac{d}{dr}\mid_{r=r_0}(S(r\cdot x))u(x)+S(r_0\,x)\dot{u}(x)+\delta \dot{u}(x)=0,\quad x\in\Omega.\]
We take scalar products with $u$, integrate over $\Omega$ and see that

\begin{align*}
&-\frac{2}{r^3_0}\int_\Omega{\langle J\Delta u(x),u(x)\rangle\,dx}+\frac{1}{r^2_0} \int_\Omega{\langle J\Delta \dot{u}(x),u(x)\rangle\, dx}\\
&+\int_\Omega{\frac{d}{dr}\mid_{r=r_0}\langle S(r\cdot x)u(x),u(x)\rangle\,dx}+\int_\Omega{\langle S(r_0\cdot x)\dot{u}(x),u(x)\rangle\, dx}+\delta \int_\Omega{\langle\dot{u}(x),u(x)\rangle\,dx}=0.
\end{align*}
Consequently,

\[\Gamma(\mathcal{A}^\delta,r_0)[u]=-\frac{1}{r^2_0}\int_\Omega{\langle J\Delta \dot{u}(x),u(x)\rangle\, dx}-\int_\Omega{\langle S(r_0\cdot x)\dot{u}(x),u(x)\rangle\, dx}-\delta \int_\Omega{\langle\dot{u}(x),u(x)\rangle\,dx}\]
and a subsequent integration by parts gives

\begin{align*}
\Gamma(\mathcal{A}^\delta,r_0)[u]&=-\frac{1}{r^2_0}\int_\Omega{\langle  \dot{u}(x),J\Delta u(x)\rangle\, dx}-\frac{1}{r^2_0}\int_{\partial\Omega}{\langle J\partial_n\dot{u}(x),u(x)\rangle\,dS}\\
&+\frac{1}{r^2_0}\int_{\partial\Omega}{\langle J\dot{u}(x),\partial_nu(x)\rangle\,dS}-\int_\Omega{\langle S(r_0\cdot x)\dot{u}(x),u(x)\rangle\, dx}-\delta \int_\Omega{\langle\dot{u}(x),u(x)\rangle\,dx},
\end{align*}
where we denote $\partial_nu(x)=(\partial_nu^1(x),\ldots,\partial_nu^k(x))^T$ and as before $\partial_nu^j(x)=\sum^n_{l=1}{\frac{\partial u^j}{\partial x^l}(x)\nu^l(x)}$.
Since $u$ solves the boundary value problem \eqref{bvplin}, it follows that

\begin{align}\label{crossform}
\Gamma(\mathcal{A}^\delta,r_0)[u]=\frac{1}{r^2_0}\int_{\partial\Omega}{\langle J\dot{u}(x),\partial_nu(x)\rangle\,dS}.
\end{align}
We now consider the Maslov index $\mu_{Mas}(\ell,\mu,[r^\ast,1])$, and we introduce a curve $\ell^\delta:[r^\ast,1]\rightarrow\mathcal{FL}_\mu(\beta)$ by 

\[\ell^\delta(r)=\tau\left(\{u\in D_{max}:J\Delta u(x)+S_r(x)u(x)+r^2\delta\, u(x)=0\}\right),\]
where $-\delta^\ast<\delta<\delta^\ast$ is chosen as above. Let us recall that $\tau:D_{max}\rightarrow\beta=D_{max}/D_{min}$ denotes the canonical projection. It is readily seen from \eqref{ucp} that the linear maps 

\begin{align}\label{gammaiso}
\tau\mid_{\ker\mathcal{A}^\delta_r}:\ker\mathcal{A}^\delta_r\rightarrow \ell^\delta(r)\cap\mu
\end{align}
are isomorphisms for all $r\in[r^\ast,1]$.\\
We consider the homotopy $h:[r^\ast,1]\times [0,1]\rightarrow\mathcal{FL}_\mu(\beta)$, 

\[h(r,s)=\tau\left(\{u\in D_{max}:J\Delta u(x)+S_r(x)u(x)+s\cdot r^2\delta u(x)=0\}\right),\]
which is continuous by Proposition \ref{contell}. The isomorphisms \eqref{gammaiso} show that  

\[h(r^\ast,s)\cap\mu=h(1,s)\cap\mu=\{0\}\quad\text{for all}\, s\in [0,1],\]
and so it follows from property iii) in Appendix \ref{App:Maslov} that 

\begin{align}\label{masdelta}
\mu_{Mas}(\ell,\mu,[r^\ast,1])=\mu_{Mas}(\ell^\delta,\mu,[r^\ast,1]).
\end{align}
Since $\ker\mathcal{A}^\delta_r\neq\{0\}$ if and only if $\ell^\delta(r)\cap\mu\neq0$ by \eqref{gammaiso}, we can henceforth assume that $r_0$ is the only crossing of $\ell^\delta$ in $[r^\ast,1]$. The task is now to compute the corresponding crossing form $\Gamma(\ell^\delta,\mu;r_0)$. Let $y\in\ell^\delta(r_0)\cap\mu$. By \eqref{gammaiso} we can take 

\[u\in\ker\mathcal{A}^\delta_{r_0}=\{v\in D:\,J\Delta v(x)+S_{r_0}(x)v(x)+r^2_0\delta v(x)=0\}\]
such that $\tau(u)=y$. As in \eqref{equII}, we see that 

\[J\Delta u_r(x)+S_r(x)u_r(x)+r^2\delta\, u_r(x)=0,\quad x\in\Omega,\]
when $u_r(x)=u\left(\frac{r}{r_0}x\right)$ is defined for $r$ sufficiently close to $r_0$. Consequently, $X(r):=\tau(u_r)\in\beta$ is in $\ell^\delta(r)$, and moreover, $X$ depends smoothly on $r$ since $u$ is smooth by standard regularity theory.\\
Let now $\varphi_r:\ell^\delta(r_0)\rightarrow\ell^\delta(r_0)^\perp$ be a family of maps such that $\gra\varphi_r=\ell^\delta(r)$ for $|r-r_0|$ sufficiently small (cf. App. \ref{App:Maslov}). We define $c(r)=P(X(r))$, where $P:\beta\rightarrow\beta$ denotes the orthogonal projection onto $\ell^\delta(r_0)$, and obtain a smooth curve $c$ in $\ell^\delta(r_0)$ such that 

\[X(r)=c(r)+\varphi_r(c(r))\in\ell^\delta(r).\]
Note that $c(r_0)=X(r_0)=\tau(u)=y$ since $u_{r_0}=u$. Moreover, $\varphi_{r_0}\equiv0$ and so $\dot{c}(r_0)+\varphi_{r_0}(\dot{c}(r_0))=\dot{c}(r_0)\in\ell^\delta(r_0)$. It follows that

\begin{align*}
\omega(X(r_0),\dot{X}(r_0))&=\omega(y,\dot{c}(r_0)+\varphi_{r_0}(\dot{c}(r_0))+\dot{\varphi}_{r_0}(c(r_0)))\\
&=\omega(y,\dot{c}(r_0)+\varphi_{r_0}(\dot{c}(r_0)))+\omega(y,\dot{\varphi}_{r_0}(c(r_0)))\\
&=\omega(y,\dot{\varphi}_{r_0}(c(r_0)))=\omega(y,\dot{\varphi}_{r_0}(y)),
\end{align*}
and we have

\begin{align*}
\Gamma(\ell^\delta,\mu;r_0)[y]&=\frac{d}{dr}\mid_{r=r_0}\omega(y,\varphi_r(y))=\omega(X(r_0),\dot{X}(r_0))=\omega(\tau(u),\tau(\dot u)).
\end{align*}
Since $u$ is smooth and vanishes on $\partial\Omega$, we finally see from \eqref{omegaregular} that

\begin{align*}
\Gamma(\ell^\delta,\mu;r_0)[y]&=-\int_{\partial\Omega}{\langle J\partial_n\dot{u}(x),u(x)\rangle\, dS}+\int_{\partial\Omega}{\langle J\dot{u}(x),\partial_nu(x)\rangle\, dS}\\
&=\int_{\partial\Omega}{\langle J\dot{u}(x),\partial_nu(x)\rangle\, dS}.
\end{align*}
By \eqref{crossform}, we thus have shown that

\[\Gamma(\ell^\delta,\mu;r_0)[\tau(u)]=r^2_0\,\Gamma(\mathcal{A}^\delta,r_0)[u],\quad u\in\ker\mathcal{A}^\delta_{r_0}.\]
Since \eqref{gammaiso} is an isomorphism, it follows that $\Gamma(\ell^\delta,\mu;r_0)$ is non-degenerate and so we finally conclude from Proposition \ref{thm:localcontribution}

\[\mu_{Mas}(\ell^\delta,\mu,[r^\ast,1])=\sgn\Gamma(\ell^\delta,\mu;r_0)=\sgn \Gamma(\mathcal{A}^\delta,r_0),\]
which proves Theorem \ref{mainI}.


\subsection{Proof of Theorem \ref{mainII}}
Let us go back to the beginning of our proof of Theorem \ref{mainI} and let us now set $\delta=0$. As before, we see that the crossings and crossing forms of the operators $\mathcal{A}^0$ and $h^0$ coincide (cf. \eqref{crossformh}). Let $r_0\in(r^\ast,1)$ be a crossing of $\mathcal{A}^0$. By \eqref{crossform} we have

\[\Gamma(\mathcal{A}^0,r_0)[u]=-\frac{1}{r^2_0}\int_{\partial\Omega}{\langle \dot{u}(x),\partial_nu(x)\rangle\,dS},
\]
and since $\dot{u}(x)=\frac{1}{r_0}(D_xu)x$ by \eqref{dotu}, we obtain

\begin{align*}
\Gamma(\mathcal{A}^0,r_0)[u]=-\frac{1}{r^3_0}\sum^k_{j=1}{\int_{\partial\Omega}{\langle\nabla u^j(x),x\rangle\,\langle\nabla u^j(x),\nu(x)\rangle}\,dS}.
\end{align*}
If we denote by $x^t$ the component of $x$ tangential to the boundary $\partial\Omega$, then 

\[x=\langle x,\nu(x)\rangle\nu(x)+x^t,\]
and so

\[\langle\nabla u^j(x),x\rangle=\langle\nabla u^j(x),\nu(x)\rangle\,\langle x,\nu(x)\rangle+\langle\nabla u^j(x),x^t\rangle.\]
We obtain

\begin{align*}
\Gamma(\mathcal{A}^0,r_0)[u]&=-\frac{1}{r^3_0}\sum^k_{j=1}{\int_{\partial\Omega}{(\partial_nu^j(x))^2\,\langle x,\nu(x)\rangle}\,dS}\\
&-\frac{1}{r^3_0}\sum^k_{j=1}{\int_{\partial\Omega}{\partial_nu^j(x)\,\langle\nabla u^j(x),x^t\rangle}\,dS}.
\end{align*}
Since

\begin{align*}
&\diverg(u^j(x)\,(\partial_nu^j)(x)\, x^t)=(\partial_nu^j)(x)\langle\nabla u^j(x),x^t\rangle+u^j(x)\langle\nabla(\partial_nu^j)(x),x^t\rangle\\
&+u^j(x)(\partial_nu^j)(x)\diverg(x^t)
\end{align*}
and $u^j\mid_{\partial\Omega}=0$, we see that 

\[(\partial_nu^j)(x)\,\langle\nabla u^j(x),x^t \rangle=\diverg(u^j(x)(\partial_nu^j)(x) x^t),\quad x\in\partial \Omega,\]
and now Stokes' theorem gives

\begin{align}\label{inequ}
\Gamma(\mathcal{A}^0,r_0)[u]=-\frac{1}{r^3_0}\sum^k_{j=1}{\int_{\partial\Omega}{((\partial_nu^j)(x))^2\,\langle x,\nu(x)\rangle}\,dS}\leq 0,
\end{align}
where we use that $\langle x,\nu(x)\rangle>0$, $x\in\partial\Omega$, since $\Omega$ is star-shaped with respect to $0$. Finally, even the strict inequality holds in \eqref{inequ}, for otherwise $\partial_nu=0$ which implies that $u\equiv 0$ by \eqref{ucp}.\\
Consequently, $\Gamma(\mathcal{A}^0,r_0)$ is negative definite, and so in particular non-degenerate. Moreover, since the crossing forms of $\mathcal{A}^0$ and $h^0$ coincide by \eqref{crossformh}, $h^0$ has only regular crossings as well. Finally we conclude from Lemma \ref{Morse=sfl}, Theorem \ref{thm:crossing} and our choice of $r^\ast$ in Section \ref{sect:proofI} that 

\begin{align}\label{contributionreg}
\begin{split}
\mu_{Morse}(h_1)&=-\sfl(h,[0,1])=-\sfl(h^0,[r^\ast,1])=-\sfl(\mathcal{A}^0,[r^\ast,1])\\
&=-\sum_{r\in (r^\ast,1)}{\sgn\Gamma(\mathcal{A}^0,r)}=\sum_{r\in(r^\ast,1)}{\dim\ker\mathcal{A}^0_r}=\sum_{r\in(0,1)}{\dim\ker\mathcal{A}^0_r},
\end{split}
\end{align}
where we have used that $\Gamma(\mathcal{A}^0,r)$ is negative definite. Now the assertion follows from $m(r)=\dim\ker\mathcal{A}^0_r$ (cf. \eqref{mr}).


\subsection{Proof of Corollary \ref{maincor}}
Let us first recall from \cite[\S 6.4]{Weidmann} that for $\Omega=(0,1)$, the space $D_{max}$ is just $H^2([0,1],\mathbb{R}^k)$. We define a map

\[\varphi:\beta\rightarrow\mathbb{R}^{2k}\oplus\mathbb{R}^{2k},\quad \varphi(\tau(u))\mapsto ((u(0),Ju'(0)),(u(1),Ju'(1))).\]
and note that $\varphi$ is well defined and injective since $(u(0),Ju'(0),u(1),Ju'(1))=0$ for all $u\in H^2_0([0,1],\mathbb{R}^k)$. Moreover, it is clear that every element in $\mathbb{R}^{2k}\oplus\mathbb{R}^{2k}$ can be obtained as image under $\varphi$ of some element in $H^2([0,1],\mathbb{R}^k)$, and consequently, $\varphi$ is an isomorphism. Finally, we obtain from \eqref{omegaregular}

\begin{align*}
\omega(\tau(u),\tau(v))&=\langle J u'(1),v(1)\rangle-\langle Ju'(0),v(0)\rangle-\langle u(1),Jv'(1)\rangle+\langle u(0),Jv'(0)\rangle\\
&=\left\langle\begin{pmatrix}
0&I_k\\
-I_k&0	
\end{pmatrix}\begin{pmatrix}
u(1)\\
Ju'(1)	
\end{pmatrix},\begin{pmatrix}
	v(1)\\
	Jv'(1)
\end{pmatrix}\right\rangle-\left\langle\begin{pmatrix}
0&I_k\\
-I_k&0	
\end{pmatrix}\begin{pmatrix}
u(0)\\
Ju'(0)	
\end{pmatrix},\begin{pmatrix}
	v(0)\\
	Jv'(0)
\end{pmatrix}\right\rangle\\
&=-\omega_2(\varphi([u]),\varphi([v]),
\end{align*}
where $\omega_2=\omega_0\times(-\omega_0)$ was defined in Appendix \ref{App:Maslov}. Let us recall that by definition

\[\ell(r)=\tau(\{u\in H^2([0,1],\mathbb{R}^k):\, Ju''(x)+S_r(x)u(x)=0,\quad x\in [0,1]\}),\quad r\in[0,1].\]
We set 

\[\widetilde{\ell}(r):=\varphi(\ell(r))=\{((u(0),Ju'(0)),(u(1),Ju'(1)):\, Ju''(x)+S_r(x)u(x)=0,\, x\in [0,1]\}\]
for $r\in[0,1]$, $\widetilde{\mu}:=\varphi(\mu)=\{0\}\times\mathbb{R}^k\times\{0\}\times\mathbb{R}^k$, and conclude that

\begin{align}\label{maslovequ}
\mu_{Mas}(\ell,\eta,[0,1])=\mu_{Mas}(\widetilde{\ell},\widetilde{\mu},[0,1]).
\end{align}
Let us now consider as in the definition of $\mu_{Mas}(\gamma)$ the fundamental solutions $\Psi_r$ of the differential equations \eqref{Hamiltonian}, and let us write

\begin{align*}
\Psi_r(x)=\begin{pmatrix}
a_r(x)&b_r(x)\\
c_r(x)&d_r(x)	
\end{pmatrix},\quad x\in [0,1],
\end{align*}
for some $k\times k$-matrices $a_r,b_r,c_r$ and $d_r$, $r\in[0,1]$. We obtain from \eqref{Hamiltonian}

\[\begin{pmatrix}
	a'_r(x)&b'_r(x)\\
	c'_r(x)&d'_r(x)
\end{pmatrix}=\begin{pmatrix}
	Jc_r(x)&Jd_r(x)\\
	-S_r(x)a_r(x)&-S_r(x)b_r(x)
\end{pmatrix}\]
and we see that the general solution of the differential equation $Ju''(x)+S_r(x)u(x)=0$ is given by $u(x)=a_r(x)u(0)+b_r(x)Ju'(0)$, $x\in[0,1]$. From this and $Ju'(x)=c_r(x)u(0)+d_r(x)Ju'(0)$, we conclude that

\[\begin{pmatrix}
u(x)\\
Ju'(x)	
\end{pmatrix}=\begin{pmatrix}
a_r(x)&b_r(x)\\
c_r(x)&d_r(x)	
\end{pmatrix}\begin{pmatrix}
	u(0)\\
	Ju'(0)
\end{pmatrix},\quad x\in[0,1],\]
and consequently

\[\widetilde{\ell}(r)=\{(w,\Psi_r(1)w):\, w\in\mathbb{R}^{2k}\}=\gra\Psi_r(1)\subset\mathbb{R}^{2k}\oplus\mathbb{R}^{2k}.\]
Setting $\mu=\mu'=\{0\}\times\mathbb{R}^k$ in \eqref{RobSalgra} we finally obtain from \eqref{maslovequ}

\begin{align*}
\mu_{Mas}(\ell,\mu,[0,1])&=\mu(\gra\Psi_\cdot(1),(\{0\}\times\mathbb{R}^k)\times(\{0\}\times\mathbb{R}^k),[0,1])\\
&=\mu_{Mas}(\Psi_\cdot(1)(\{0\}\times\mathbb{R}^k),\{0\}\times\mathbb{R}^k,[0,1])\\
&=\mu_{Mas}(\gamma).
\end{align*}


\section{Bifurcation}
In this section we use the bifurcation theory developed in \cite{FPR} and \cite{JacoboIch} to study bifurcation phenomena for solutions of semilinear elliptic partial differential equations under shrinking of the domain. Our results will improve the papers \cite{AleIchDomain} and \cite{AleIchBall} of the authors, which were discussed in detail in the second author's survey \cite{WaterstraatSurvey}.\\
Let $H$ be a separable Hilbert space and $f:[a,b]\times H\rightarrow\mathbb{R}$ a continuous function such that each $f_\lambda:=f(\lambda,\cdot):H\rightarrow\mathbb{R}$ is $C^2$ and its first and second derivatives depend continuously on $\lambda\in[a,b]$. In what follows, we assume that $0\in H$ is a critical point of all $f_\lambda$, $\lambda\in[a,b]$. 

\begin{defi}\label{defbif}
We call $\lambda^\ast\in[a,b]$ a bifurcation point of critical points of $f$ if every neighbourhood of $(\lambda^\ast,0)$ in $[a,b]\times H$ contains elements $(\lambda,u)$ such that $u\neq 0$ is a critical point of $f_\lambda$.
\end{defi}
Since the second derivatives $D^2_0f_\lambda$ at the critical point $0\in H$ are bounded symmetric bilinear forms, there exists a unique continuous path of bounded selfadjoint operators $L$, $\lambda\in [a,b]$ on $H$ (cf. \eqref{Rieszrep}) such that 

\begin{align}\label{Riesz}
D^2_0f_\lambda(u,v)=\langle L_\lambda u,v\rangle_H,\quad u,v\in H.
\end{align}
The following assertion is an immediate consequence of the well-known implicit function theorem in Banach spaces (cf. \cite[\S 2.2]{Ambrosetti}).

\begin{lemma}\label{implicitfct}
If $\lambda^\ast$ is a bifurcation point of critical points of $f$, then $L_{\lambda^\ast}$ is not invertible.
\end{lemma}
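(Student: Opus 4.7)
The plan is to argue by contraposition, assuming that $L_{\lambda^\ast}$ is invertible and then deducing that no non-trivial critical points of $f_\lambda$ can occur near $(\lambda^\ast,0)$, in contradiction with Definition \ref{defbif}.

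First I would rephrase the critical point equation as a zero-finding problem. Using the Riesz representation, define $g:[a,b]\times H\to H$ by declaring $g(\lambda,u)$ to be the unique vector such that $(D_uf_\lambda)(v)=\langle g(\lambda,u),v\rangle_H$ for every $v\in H$. Then the critical points of $f_\lambda$ are exactly the zeros of $g(\lambda,\cdot)$, and by the assumption that $0$ is always a critical point we have $g(\lambda,0)=0$ for all $\lambda\in[a,b]$. Moreover, the continuity assumptions on $Df_\lambda$ and $D^2f_\lambda$ in $(\lambda,u)$ translate into the statement that $g$ is continuous and that the partial Fr\'echet derivative $D_ug(\lambda,u)$ exists and depends continuously on $(\lambda,u)$. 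The crucial point is that $D_ug(\lambda^\ast,0)=L_{\lambda^\ast}$ by \eqref{Riesz}.

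Next I would invoke the implicit function theorem in the Hildebrandt--Graves form (which requires only continuity of $g$ jointly and continuous dependence of $D_ug$; see e.g.\ \cite{Ambrosetti}). Assuming for contradiction that $L_{\lambda^\ast}$ is invertible, the theorem furnishes $\varepsilon>0$ and a neighbourhood $U$ of $0$ in $H$ such that, for every $\lambda\in[\lambda^\ast-\varepsilon,\lambda^\ast+\varepsilon]\cap[a,b]$, the equation $g(\lambda,u)=0$ has a unique solution $u=u(\lambda)\in U$, with $u$ depending continuously on $\lambda$. Since $g(\lambda,0)=0$, uniqueness forces $u(\lambda)\equiv 0$ on this interval, so the neighbourhood $[\lambda^\ast-\varepsilon,\lambda^\ast+\varepsilon]\times U$ of $(\lambda^\ast,0)$ contains no pair $(\lambda,u)$ with $u\neq 0$ a critical point of $f_\lambda$. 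This contradicts the assumption that $\lambda^\ast$ is a bifurcation point.

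There is no serious obstacle here; the only thing to be a little careful about is verifying that the differentiability hypotheses on $f$ really deliver the joint continuity of $g$ and of $D_ug$ needed for the implicit function theorem, so that the argument does not require any additional regularity beyond what is assumed before Definition \ref{defbif}. Once this is noted, the contrapositive argument closes immediately.
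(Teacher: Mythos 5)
Your argument is correct and is exactly what the paper intends: the authors state the lemma as an immediate consequence of the implicit function theorem in Banach spaces (citing Ambrosetti--Prodi) without spelling out the details, and your contrapositive argument via the gradient map $g$ and $D_ug(\lambda^\ast,0)=L_{\lambda^\ast}$ is precisely that consequence. Nothing to add.
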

However, $\lambda^\ast$ need not to be a bifurcation point if $L_{\lambda_\ast}$ is non-invertible, i.e., the converse statement of Lemma \ref{implicitfct} is false in general.\\
Let us now assume that $L_\lambda$ is Fredholm for all $\lambda\in[a,b]$ and that $L_a,L_b$ are invertible, so that the spectral flow of the path $L:[a,b]\rightarrow\mathcal{FS}(H)$ of bounded selfadjoint Fredholm operators on $H$ is defined. A proof of the next theorem can be found in \cite{JacoboIch}.

\begin{theorem}\label{mainbif}   
If $\sfl(L,[a,b])\neq 0$, then there exists a bifurcation point $\lambda^\ast\in (a,b)$ of critical points of $f$ from the trivial branch.
\end{theorem}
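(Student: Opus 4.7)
The plan is to argue by contradiction: assume that no $\lambda^\ast\in(a,b)$ is a bifurcation point, and derive a contradiction with $\sfl(L,[a,b])\neq 0$. The argument has three stages, and goes through a parameter-dependent Lyapunov–Schmidt reduction that converts the statement into a finite-dimensional degree computation.

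First I would perform a uniform localization. By the assumption on non-bifurcation, together with Lemma \ref{implicitfct}, for each $\lambda_0\in[a,b]$ there is an open neighborhood $U(\lambda_0)\subset[a,b]\times H$ of $(\lambda_0,0)$ such that $0$ is the only critical point of $f_\mu$ among $(\mu,u)\in U(\lambda_0)$. Using the compactness of $[a,b]$ and a suitable partition-of-unity-style argument, I would extract a continuous positive function $r\colon[a,b]\to(0,\infty)$ with the property that $u=0$ is the only critical point of $f_\lambda$ in the open ball $B_{r(\lambda)}(0)\subset H$ for every $\lambda\in[a,b]$.

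Second I would carry out a parameter-dependent Lyapunov–Schmidt reduction. Cover $[a,b]$ by finitely many closed subintervals $I_1,\dots,I_N$ chosen so that on each $I_j$ there is a continuous family of finite-rank orthogonal spectral projectors $P_\lambda^{(j)}$ of $L_\lambda$ onto the part of the spectrum of $L_\lambda$ lying in a suitably small interval around $0$. On the complement $(I-P_\lambda^{(j)})H$ the operator $L_\lambda$ is uniformly invertible on $I_j$, so the implicit function theorem lets me solve the component of the gradient equation $\nabla f_\lambda(u)=0$ in that complement, and substitution yields a smooth family of reduced functionals
\[
g^{(j)}_\lambda\colon P_\lambda^{(j)}H\to\mathbb{R},\qquad \lambda\in I_j,
\]
whose critical points near $0$ correspond bijectively to the critical points of $f_\lambda$ near $0$. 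Under this reduction, the Hessian of $g^{(j)}_\lambda$ at $0$ is conjugate to the restriction of $L_\lambda$ to the range of $P_\lambda^{(j)}$, and consequently the spectral flow of $L$ on $I_j$ equals minus the jump of the Morse index of the Hessian of $g^{(j)}_\lambda$ at $0$ between the endpoints of $I_j$. Summing over $j$ gives
\[
\sfl(L,[a,b])=\sum_{j=1}^{N}\bigl(\mu_{Morse}(D^2_0 g^{(j)}_{\lambda_j})-\mu_{Morse}(D^2_0 g^{(j)}_{\lambda_{j+1}})\bigr).
\]

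Third I would derive the contradiction in finite dimensions. By Stage 1, each reduced $g^{(j)}_\lambda$ has $0$ as its only critical point in a small neighborhood. Hence the Brouwer degree $\deg(\nabla g^{(j)}_\lambda,B,0)$ is defined and, by homotopy invariance, constant in $\lambda\in I_j$. On the other hand, at the endpoints of $I_j$ where $L_\lambda$ is invertible (possibly after a small perturbation in the interior of $(a,b)$ that does not alter the spectral flow and can be chosen so that all crossings are regular by a transversality argument), the degree equals $(-1)^{\mu_{Morse}(D^2_0 g^{(j)}_{\lambda})}$. Because $\sfl(L,[a,b])\neq 0$, at least one $j$ must exhibit a genuine jump in the Morse index; at such a crossing, $L_\lambda$ is singular and the classical Krasnoselskii bifurcation theorem applied to the finite-dimensional gradient $\nabla g^{(j)}_\lambda$ forces bifurcation from the branch of zeros, contradicting Stage 1.

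The main obstacle is the second stage: making the Lyapunov–Schmidt reduction continuous in $\lambda$ across the whole parameter interval and bookkeeping the spectral flow correctly under this reduction. In particular, the choice of the projectors $P_\lambda^{(j)}$ must avoid the crossings lying on the boundary of the $I_j$, and one must verify that small transversal perturbations of the family $f$ (needed to reduce to simple crossings) neither destroy the assumed absence of bifurcation nor alter $\sfl(L,[a,b])$. Once this technical package is in place, the reduction to a finite-dimensional degree argument and the application of Krasnoselskii's theorem are standard.
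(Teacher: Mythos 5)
The paper does not prove Theorem~\ref{mainbif}: immediately before it, the text refers to \cite{JacoboIch} for a proof, so there is no internal argument to compare against. Your outline is nevertheless in the right circle of ideas, and Stages~1 and~2 are essentially sound, provided the subdivision points $\lambda_j$ are chosen where $L_{\lambda_j}$ is invertible, so that $\sfl(L,[a,b])=\sum_j\sfl(L,I_j)$ and each summand equals the jump of the Morse index of the reduced Hessian over $I_j$.

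The genuine gap is in Stage~3. Brouwer degree records only the \emph{parity} of the Morse index: at an invertible endpoint the local degree of $\nabla g^{(j)}_\lambda$ equals $(-1)^{\mu_{Morse}(D^2_0 g^{(j)}_\lambda)}$, so homotopy invariance of the degree rules out odd Morse index jumps but yields no contradiction when the spectral flow over $I_j$ is even. A nonzero total spectral flow can be produced entirely by even local contributions (a single crossing of signature $2$, say), so the degree computation cannot close the argument. What you actually need, and invoke imprecisely as ``the classical Krasnoselskii bifurcation theorem,'' is the strictly stronger bifurcation principle for \emph{gradient} maps: for a finite-dimensional $C^2$ family of potentials with $0$ as an isolated critical point, \emph{any} jump of the Morse index of $D^2_0 g^{(j)}_\lambda$ forces bifurcation, not merely an odd one. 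The degree-theoretic Krasnoselskii theorem (odd crossing number) is exactly the statement that fails to give this; the correct finite-dimensional input is a comparison of critical groups or Conley indices of $0$ across the crossing, which is what the variational structure buys and what the cited references \cite{JacoboIch} and \cite{FPR} supply. Once that ingredient is in place, and once one verifies that the perturbation used to regularise crossings is small enough not to create nontrivial critical points inside the uniform ball from Stage~1, the outline becomes essentially the approach of the cited reference.
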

In some situations there is an a priori bound on the dimension of the kernels of the operators $L_\lambda$. The following result shows that then the number of bifurcation points can be estimated from below (cf. \cite[Thm. 2.1 ii)]{JacoboIch}).

\begin{theorem}\label{biffinite}
Assume that there exist only finitely many $\lambda\in(a,b)$ such that $\ker L_\lambda\neq 0$. Then there are at least

\[\left\lfloor\frac{|\sfl L|}{\max_{\lambda\in I}\dim\ker L_\lambda}\right\rfloor\]
distinct bifurcation points of critical points from the trivial branch $[a,b]\times\{0\}$.
\end{theorem}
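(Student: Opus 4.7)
The plan is to localise the spectral flow at the (finitely many) singular parameters of $L$ and to observe that the singular points which are \emph{not} bifurcation points contribute trivially. First, by Lemma \ref{implicitfct}, every bifurcation point lies in the finite set $\Sigma:=\{\lambda\in(a,b):\ker L_\lambda\neq\{0\}\}$; enumerate $\Sigma=\{\lambda_1<\cdots<\lambda_N\}$. Using the continuity of $\lambda\mapsto L_\lambda$ and the openness of $GL(H)$ in $\mathcal{L}(H)$, I would pick regular parameters $a=t_0<t_1<\cdots<t_N=b$ with $\lambda_i\in(t_{i-1},t_i)$ and $L_{t_i}$ invertible for every $i$, and apply the concatenation property of the spectral flow to write
\[
\sfl(L,[a,b])=\sum_{i=1}^N\sfl(L,[t_{i-1},t_i]).
\]

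The key step is the claim that if $\lambda_i$ is \emph{not} a bifurcation point, then $\sfl(L,[t_{i-1},t_i])=0$. I would argue this by contradiction: a nonzero local spectral flow would, by Theorem \ref{mainbif}, produce a bifurcation point in $(t_{i-1},t_i)$; by Lemma \ref{implicitfct} such a point is singular, and $\lambda_i$ is the only singular parameter in the interval $(t_{i-1},t_i)$, contradicting the hypothesis. Hence only bifurcation singular points contribute to the total spectral flow.

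Next, I would combine this with the standard upper bound $|\sfl(L,[t_{i-1},t_i])|\leq\dim\ker L_{\lambda_i}$ at an isolated singular point. Letting $k$ denote the number of bifurcation points and $M:=\max_{\lambda\in[a,b]}\dim\ker L_\lambda$, this yields
\[
|\sfl(L,[a,b])|\leq\sum_{\lambda_i\ \textrm{bifurcation}}|\sfl(L,[t_{i-1},t_i])|\leq kM,
\]
so that $k\geq |\sfl L|/M$, and since $k$ is a non-negative integer one obtains $k\geq\lfloor |\sfl L|/M\rfloor$ as claimed.

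The main obstacle I expect is the local estimate $|\sfl(L,[t_{i-1},t_i])|\leq\dim\ker L_{\lambda_i}$; once this is granted, everything else follows routinely from Theorem \ref{mainbif}, Lemma \ref{implicitfct} and the additivity of the spectral flow. This bound is a structural property of the spectral flow for paths of bounded selfadjoint Fredholm operators that can be obtained by a small generic perturbation making all crossings regular and then invoking the crossing form formula (Theorem \ref{thm:crossing} in the appendix), since the signature of a quadratic form on $\ker L_{\lambda_i}$ is bounded in absolute value by its dimension.
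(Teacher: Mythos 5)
The paper does not give its own proof of Theorem~\ref{biffinite}; it refers to~\cite[Thm.~2.1~ii)]{JacoboIch}, so there is nothing in the text to compare your argument against directly. That said, your overall strategy -- enumerate the singular set, insert regular cut parameters, use additivity of the spectral flow, invoke Theorem~\ref{mainbif} together with Lemma~\ref{implicitfct} to kill the contribution of non-bifurcating singular radii, and then divide by the maximal kernel dimension -- is correct and is the natural proof of the statement.

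The one place where your plan does not quite hold up is the justification of the local estimate $|\sfl(L,[t_{i-1},t_i])|\le\dim\ker L_{\lambda_i}$, which you rightly single out as the main obstacle. The route you propose, via Theorem~\ref{thm:Sard} and Theorem~\ref{thm:crossing}, has two problems. First, the path $L$ in this setting is only assumed continuous, not $C^1$ in $\lambda$, so the crossing form is not defined and a shift $L+\delta I_H$ does not improve regularity. Second, and more substantively, even after a $C^1$ approximation a generic perturbation typically splits the single degenerate crossing at $\lambda_i$ into several regular crossings at nearby parameters $\tilde\lambda_j$, and the crossing forms there live on $\ker\bigl(L_{\tilde\lambda_j}+\delta I_H\bigr)$, not on $\ker L_{\lambda_i}$; the sum of the absolute values of their signatures is not a priori bounded by $\dim\ker L_{\lambda_i}$ (an eigenvalue branch could cross the perturbed level several times), so the estimate does not follow from ``signature is bounded by dimension'' applied crossing by crossing. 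The correct and simpler derivation goes directly through the definition~\eqref{sfl} of the spectral flow: pick $\Lambda>0$ with $\sigma(L_{\lambda_i})\cap[-\Lambda,\Lambda]=\{0\}$, so that $E_{[-\Lambda,\Lambda]}(L_{\lambda_i})=\ker L_{\lambda_i}$, and then $\epsilon>0$ small enough that $L\vert_{[\lambda_i-\epsilon,\lambda_i+\epsilon]}$ stays in $N(L_{\lambda_i},\Lambda)$ and $L_{\lambda_i\pm\epsilon}$ are invertible. Then
\[
\sfl(L,[t_{i-1},t_i])=\sfl(L,[\lambda_i-\epsilon,\lambda_i+\epsilon])=\dim E_{[0,\Lambda]}(L_{\lambda_i+\epsilon})-\dim E_{[0,\Lambda]}(L_{\lambda_i-\epsilon}),
\]
and both terms lie in $\{0,\ldots,\dim\ker L_{\lambda_i}\}$ because $E_{[0,\Lambda]}(L_{\lambda_i\pm\epsilon})\subset E_{[-\Lambda,\Lambda]}(L_{\lambda_i\pm\epsilon})$ and the latter has dimension $\dim\ker L_{\lambda_i}$ throughout $N(L_{\lambda_i},\Lambda)$. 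With this replacement for the local bound the rest of your argument goes through verbatim.
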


We now want to apply the previous bifurcation theorems in our setting. Let us recall that in the definition of the generalised Morse index in Section \ref{section:genMorse}, we have introduced a family of $C^2$-functionals $\psi_r: H^1_0(\Omega_r,\mathbb{R}^k)\rightarrow\mathbb{R}$, $r\in(0,1]$, such that the critical points of $\psi_r$ are precisely the weak solutions of the semilinear equation \eqref{bvpnonlin}.\\
Let us recall that $r^\ast\in(0,1]$ is a \textit{bifurcation radius} if there exist a sequence $\{r_n\}_{n\in\mathbb{N}}\subset(0,1]$ and weak solutions $0\neq u_n\in H^1_0(\Omega_{r_n},\mathbb{R}^k)$ of \eqref{bvpnonlin} such that $r_n\rightarrow r^\ast$ and $\|u_n\|_{H^1_0(\Omega_{r_n},\mathbb{R}^k)}\rightarrow 0$ for $n\rightarrow\infty$. Clearly, $r^\ast$ is a bifurcation radius for the semilinear equations \eqref{bvpnonlin} if and only if it is a bifurcation point in the sense of Definition \ref{defbif} for the family of functionals $\tilde{\psi}:[0,1]\times H^1_0(\Omega,\mathbb{R}^k)\rightarrow\mathbb{R}$ defined in \eqref{tildepsi}. Note that the quadratic forms $h_r$ from the definition of the generalised Morse index in Section \ref{section:genMorse} are induced by the Hessians of $\tilde{\psi}_r$ at $0\in H^1_0(\Omega,\mathbb{R}^k)$. Consequently, we obtain from our main Theorem \ref{mainI} the remarkable result that the existence of bifurcation radii can be deduced from the images of $\ker(\Delta^\ast_J+S_r)$ under $\tau$ in the symplectic Hilbert space $\beta$:

\begin{theorem}\label{mainbifI}
If the assumptions of Theorem \ref{mainI} hold and if $\mu_{mas}(\ell,\mu,[0,1])\neq 0$, then there exists a bifurcation radius $r^\ast\in(0,1)$.
\end{theorem}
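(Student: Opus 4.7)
The plan is to chain together the main index identity Theorem \ref{mainI} with the abstract bifurcation result Theorem \ref{mainbif}, routing through the family of functionals $\tilde{\psi}_r$ on the fixed Hilbert space $H^1_0(\Omega,\mathbb{R}^k)$ rather than the ``moving'' functionals $\psi_r$ on $H^1_0(\Omega_r,\mathbb{R}^k)$. Concretely, my first step is to apply Theorem \ref{mainI} to transfer the hypothesis: from $\mu_{Mas}(\ell,\mu,[0,1])\neq 0$ I obtain $\sfl(h,[0,1])\neq 0$. Since the spectral flow of a path of Fredholm quadratic forms is by definition the spectral flow of the associated Riesz representations $L_{h_r}$, this means the path $L_{h}:[0,1]\to\mathcal{FS}(H^1_0(\Omega,\mathbb{R}^k))$ has nonzero spectral flow.

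Next I want to identify this path of Fredholm operators as the one associated with $\tilde{\psi}$ via \eqref{Riesz}. Recall from Section \ref{section:genMorse} that $h_r(u)=D^2_0\tilde{\psi}_r(u,u)$, and from Lemma \ref{lem:Fredholm} that $h_0$ and $h_1$ are non-degenerate (the latter using the non-degeneracy hypothesis of Theorem \ref{mainI}). Thus the Riesz representation of $D^2_0\tilde{\psi}_r$ is precisely $L_{h_r}$, and the endpoints $L_{h_0}$, $L_{h_1}$ are invertible. The abstract setup of Theorem \ref{mainbif} therefore applies to $\tilde{\psi}:[0,1]\times H^1_0(\Omega,\mathbb{R}^k)\to\mathbb{R}$, provided we check that $\tilde{\psi}$ and its first two derivatives depend continuously on $r$ on the fixed Hilbert space $H^1_0(\Omega,\mathbb{R}^k)$; this is a routine verification using the growth condition \eqref{growth} together with the Sobolev embedding $H^1_0(\Omega,\mathbb{R}^k)\hookrightarrow L^{2\alpha}(\Omega,\mathbb{R}^k)$ (and the compact embedding into $L^2$, already exploited in Lemma \ref{lem:Fredholm}).

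Having verified the hypotheses of Theorem \ref{mainbif}, I conclude that there exists a bifurcation point $r^\ast\in(0,1)$ of critical points of $\tilde{\psi}$ in the sense of Definition \ref{defbif}. The final step is to translate this back: as remarked in the paragraph preceding the statement, a bifurcation point $r^\ast$ of $\tilde{\psi}$ on the fixed space $H^1_0(\Omega,\mathbb{R}^k)$ is the same thing as a bifurcation radius for \eqref{bvpnonlin}, since the rescaling $x\mapsto r\cdot x$ sends critical points of $\tilde{\psi}_r$ on $H^1_0(\Omega,\mathbb{R}^k)$ bijectively onto critical points of $\psi_r$ on $H^1_0(\Omega_r,\mathbb{R}^k)$, preserving the smallness of norms (up to a fixed factor from the Jacobian).

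The main obstacle, such as it is, lies in the last step: one must be precise about how the convergence $\|u_n\|_{H^1_0(\Omega_{r_n},\mathbb{R}^k)}\to 0$ in the definition of bifurcation radius corresponds to convergence in the fixed space $H^1_0(\Omega,\mathbb{R}^k)$ under rescaling with $r_n\to r^\ast>0$. Since $r^\ast$ is bounded away from $0$, the rescaling is a uniform equivalence of norms on a neighbourhood of $r^\ast$, so this is genuinely routine; no real analytical subtlety intervenes, and the theorem is essentially a direct corollary once Theorem \ref{mainI} and Theorem \ref{mainbif} are combined.
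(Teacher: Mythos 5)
Your argument is correct and follows the same route as the paper: invoke Theorem \ref{mainI} to pass from a nonzero Maslov index to a nonzero spectral flow of the path $h$, recognise $L_{h_r}$ as the Riesz representation of $D^2_0\tilde{\psi}_r$ on the fixed space $H^1_0(\Omega,\mathbb{R}^k)$, apply the abstract bifurcation Theorem \ref{mainbif}, and translate the resulting bifurcation point of $\tilde{\psi}$ back into a bifurcation radius for \eqref{bvpnonlin} via the rescaling $x\mapsto r\cdot x$. The only thing you spell out that the paper dispatches with the word ``clearly'' is the last rescaling step, and your justification (uniform norm equivalence since $r^\ast$ is bounded away from $0$, so the constant factor $r^{n-2}$ relating $\psi_r$ and $\tilde{\psi}_r$ is irrelevant) is exactly the right observation.
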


In the proof of Theorem \ref{mainII} we showed that if $J=-I_k$, then there are only finitely many crossings, and at each crossing $r_0$ of $h$ the contribution to the spectral flow is the dimension of the solution space of \eqref{bvplin} (cf, \eqref{contributionreg}). Consequently, we obtain the following theorem, which extends the main theorems of \cite{AleIchDomain} and \cite{AleIchBall} to strongly elliptic systems.

\begin{theorem}\label{mainbifII}
If the assumptions of Theorem \ref{mainI} hold and $J=-I_k$, then the bifurcation radii of \eqref{bvpnonlin} are precisely the conjugate radii of \eqref{bvplin}.  
\end{theorem}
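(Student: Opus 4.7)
The plan is to establish the two set-theoretic inclusions separately. The first direction (bifurcation radius $\Rightarrow$ conjugate radius) is an abstract consequence of the implicit function theorem in Banach spaces, while the second (conjugate radius $\Rightarrow$ bifurcation radius) exploits a localization of the spectral flow around an isolated crossing, using the sign information obtained in the proof of Theorem \ref{mainII}.

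For the first inclusion, I would note that the rescaling $v(y)=u(r\cdot y)$ provides a bijection between weak solutions of \eqref{bvpnonlin} on $\Omega_r$ and critical points of the rescaled functional $\tilde{\psi}_r$ on the fixed space $H^1_0(\Omega,\mathbb{R}^k)$, and the corresponding $H^1_0$-norms are uniformly comparable as $r$ varies in a compact subset of $(0,1]$. Hence a bifurcation radius $r^\ast$ in the sense of the introduction is a bifurcation point of critical points of $\tilde{\psi}$ in the sense of Definition \ref{defbif}. Lemma \ref{implicitfct} then forces the Riesz representation $L_{r^\ast}$ of the Hessian quadratic form $h_{r^\ast}$ to be non-invertible, which by Lemma \ref{lemmaconj} is equivalent to $r^\ast$ being a conjugate radius of \eqref{bvplin}.

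For the converse, fix a conjugate radius $r_0\in(0,1)$. From the proof of Theorem \ref{mainII} two ingredients specific to the hypothesis $J=-I_k$ are available: the conjugate radii in $(0,1)$ are finite in number, and at every crossing the crossing form $\Gamma(\mathcal{A}^0,r)$ is negative definite (the strict negativity uses the star-shapedness of $\Omega$ together with the unique continuation property \eqref{ucp}). I would choose $\varepsilon>0$ so small that $[r_0-\varepsilon,r_0+\varepsilon]\subset(0,1)$ and $r_0$ is the unique crossing in this subinterval; both endpoints are then non-conjugate, so $\sfl(h,[r_0-\varepsilon,r_0+\varepsilon])$ is well defined. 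Combining Proposition \ref{crossformcomsfl} with the identification \eqref{crossformh} of the crossing forms of $h$ and $\mathcal{A}^0$ gives
\[
\sfl(h,[r_0-\varepsilon,r_0+\varepsilon])=\sgn\Gamma(\mathcal{A}^0,r_0)=-m(r_0)\neq 0.
\]
Applying Theorem \ref{mainbif} to the family $\tilde{\psi}$ on this subinterval yields a bifurcation point $\lambda^\ast(\varepsilon)\in(r_0-\varepsilon,r_0+\varepsilon)$.

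To conclude that $r_0$ itself is a bifurcation radius, I would let $\varepsilon\to 0$. By the first inclusion each $\lambda^\ast(\varepsilon)$ is a conjugate radius, and since the conjugate radii form a finite subset of $(0,1)$, the convergence $\lambda^\ast(\varepsilon)\to r_0$ forces $\lambda^\ast(\varepsilon)=r_0$ for all sufficiently small $\varepsilon$, which is exactly the desired conclusion. The \emph{main obstacle} is securing the localized, sign-definite spectral flow contribution at $r_0$: both the isolation of crossings and the negative definiteness of the crossing forms rely essentially on $J=-I_k$, and when $J\neq -I_k$ either of them may fail (crossings can accumulate and crossing forms become indefinite), which is precisely why in that generality the bifurcation radii can form a proper subset of the conjugate radii.
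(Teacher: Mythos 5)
Your proposal is correct and follows essentially the same route the paper indicates: bifurcation radii are conjugate radii by Lemma \ref{implicitfct} together with Lemma \ref{lemmaconj}, and conversely the negative-definiteness of the crossing forms established in the proof of Theorem \ref{mainII} makes the local spectral flow around each (isolated) conjugate radius equal to $-m(r_0)\neq 0$, so Theorem \ref{mainbif} applies on a small subinterval. The only superfluous step is letting $\varepsilon\to 0$ at the end: once $\varepsilon$ is chosen so that $r_0$ is the unique conjugate radius in $[r_0-\varepsilon,r_0+\varepsilon]$, the bifurcation point produced by Theorem \ref{mainbif}, being itself a conjugate radius by the first inclusion, is already forced to equal $r_0$.
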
  
Note that this means in particular that the converse of Lemma \ref{implicitfct} is true under the assumptions of Theorem \ref{mainbifII}.\\
Another interesting special case is $n=1$, i.e., systems of ordinary differential equations. Then the dimensions of the solution spaces of the boundary value problems \eqref{bvplin} can be estimated above by the space dimension $k$, and so we immediately obtain the following corollary

\begin{cor}
If $n=1$ in Theorem \ref{mainbifI} and if there are only finitely many radii $r\in (0,1)$ for which $h_r$ is degenerate, then there are at least 

\[\left\lfloor\frac{|\mu_{Mas}(\ell,\mu,[0,1])|}{k}\right\rfloor\]
distinct bifurcation radii.
\end{cor}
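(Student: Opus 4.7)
The plan is to apply Theorem \ref{biffinite} to the family of functionals $\tilde{\psi}:[0,1]\times H^1_0(\Omega,\mathbb{R}^k)\rightarrow\mathbb{R}$ introduced in \eqref{tildepsi}, whose critical points away from the trivial branch correspond bijectively to bifurcation radii of \eqref{bvpnonlin}. The Riesz representations $L_r$ of the Hessians $D^2_0\tilde{\psi}_r = h_r$ form a continuous path of bounded selfadjoint Fredholm operators by Lemma \ref{lem:Fredholm}, with invertible endpoints by the non-degeneracy assumption on $h_1$ (and $h_0$). Moreover $\sfl(L,[0,1])=\sfl(h,[0,1])$ by definition, and Theorem \ref{mainI} together with the hypothesis yields
\[
|\sfl(L,[0,1])| \;=\; |\mu_{Mas}(\ell,\mu,[0,1])|.
\]

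By Lemma \ref{lemmaconj} the kernels $\ker L_r$ are non-trivial precisely when $r$ is a conjugate radius, so the finiteness hypothesis on the set of degenerate $h_r$ makes the set $\{r\in(0,1):\ker L_r\neq 0\}$ finite, which is exactly the hypothesis of Theorem \ref{biffinite}. The essential remaining point is thus to control $\max_{r}\dim\ker L_r$ from above by $k$.

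For the dimension bound, we use that $n=1$: the degenerate elements $u\in H^1_0([0,r],\mathbb{R}^k)$ associated with $\ker L_r$ are classical solutions of the ODE system $Ju''(x)+S_r(x)u(x)=0$ on $[0,1]$ with $u(0)=u(1)=0$ (after rescaling). By standard ODE theory, the subspace of solutions satisfying only $u(0)=0$ is parametrised by the initial derivative $u'(0)\in\mathbb{R}^k$ and is therefore exactly $k$-dimensional, so imposing the second boundary condition $u(1)=0$ leaves at most $k$ linearly independent solutions. Hence $\dim\ker L_r\leq k$ for every $r\in(0,1)$.

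Combining these ingredients, Theorem \ref{biffinite} provides at least
\[
\left\lfloor\frac{|\sfl(L,[0,1])|}{\max_{r\in(0,1)}\dim\ker L_r}\right\rfloor \;\geq\; \left\lfloor\frac{|\mu_{Mas}(\ell,\mu,[0,1])|}{k}\right\rfloor
\]
distinct bifurcation points of $\tilde{\psi}$ on $(0,1)$, which translate into the claimed number of distinct bifurcation radii of \eqref{bvpnonlin}. The only non-routine step is the ODE dimension count, and even that is completely elementary once one passes to the first-order Hamiltonian reformulation of \eqref{bvplin} used in Section~4.
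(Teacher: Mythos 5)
Your proof is correct and follows essentially the same route the paper takes: it cites Theorem \ref{biffinite} for the lower bound on the number of bifurcation points, uses Theorem \ref{mainI} to replace $|\sfl(h,[0,1])|$ by $|\mu_{Mas}(\ell,\mu,[0,1])|$, and bounds $\dim\ker L_r$ by $k$ via the elementary ODE observation that solutions of $Ju''+S_r u=0$ with $u(0)=0$ form a $k$-dimensional space (the paper states this dimension bound without elaboration). Your argument is if anything a bit more explicit than the paper's, and the Hamiltonian reformulation you mention at the end is not actually needed for the dimension count, but nothing is missing or incorrect.
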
  

Since conjugate radii are isolated for $J=-I_k$, we deduce from the previous corollary and Lemma \ref{Morse=sfl} the following result:

\begin{cor}
If $n=1$ and $J=-I_k$ in Theorem \ref{mainbifI}, then there are at least 

\[\left\lfloor\frac{\mu_{Morse}(h)}{k}\right\rfloor\]
distinct bifurcation radii.
\end{cor}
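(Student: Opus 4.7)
The proof will be a straightforward concatenation of three previously established facts, so my plan is to simply identify the right inputs rather than produce any new analysis.

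First, I will pass from the Maslov index to the Morse index. Because $J = -I_k$, Lemma \ref{Morse=sfl} gives $\sfl(h,[0,1]) = -\mu_{Morse}(h_1)$, and Theorem \ref{mainI} (applicable because the hypotheses of Theorem \ref{mainbifI} include those of Theorem \ref{mainI}) identifies $\sfl(h,[0,1])$ with $\mu_{Mas}(\ell,\mu,[0,1])$. Taking absolute values (Morse indices are non-negative), this yields
\[
|\mu_{Mas}(\ell,\mu,[0,1])| \;=\; \mu_{Morse}(h_1),
\]
which I will read as $\mu_{Morse}(h)$ in the notation of the statement.

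Next I need to verify the finiteness hypothesis that the previous corollary requires, namely that there are only finitely many $r\in(0,1)$ at which $h_r$ is degenerate. Here I can invoke Theorem \ref{mainII}: since $J = -I_k$, that theorem asserts that there are only finitely many conjugate instants in $(0,1)$, and by Lemma \ref{lemmaconj} the degeneracy radii of $h_r$ are exactly the conjugate radii. Hence the isolation hypothesis is automatic.

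Finally I apply the preceding corollary (the one with the denominator $k$), which guarantees at least $\lfloor |\mu_{Mas}(\ell,\mu,[0,1])|/k \rfloor$ distinct bifurcation radii. Substituting the equality from the first step gives $\lfloor \mu_{Morse}(h)/k \rfloor$, which is exactly the claimed bound. There is no real obstacle: the entire argument is bookkeeping, with the only point requiring a moment's care being the sign/absolute-value comparison between $\sfl$, $\mu_{Morse}$ and $\mu_{Mas}$, and the observation that the corollary's isolation hypothesis is furnished for free by the strong ellipticity assumption via Theorem \ref{mainII}.
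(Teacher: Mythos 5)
Your proof is correct and follows the same route the paper takes: the paper's proof is a one-liner invoking Lemma \ref{Morse=sfl} (together with Theorem \ref{mainI}) to replace $|\mu_{Mas}(\ell,\mu,[0,1])|$ by $\mu_{Morse}(h_1)$ in the preceding corollary, after noting that conjugate radii are isolated (finite) when $J=-I_k$. Your slightly more explicit identification of the finiteness hypothesis via Theorem \ref{mainII} and Lemma \ref{lemmaconj}, and your reading of $\mu_{Morse}(h)$ as $\mu_{Morse}(h_1)$, are both exactly what the authors intend.
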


Finally, we want to point out the strength of our bifurcation theory by two examples:

\subsubsection*{Example I:\, Non variational perturbations}
Let us consider on $\Omega=\left[0,\frac{3}{2}\pi\right]$ the ordinary differential equations

\begin{equation}\label{ODEI}
\left\{
\begin{aligned}
-u''(x)-u(x)-u(x)^2v(x)^3&=0,\quad x\in\left[0,\frac{3}{2}\pi\right],\\
-v''(x)-v(x)+u(x)^3v(x)^2&=0\\
u(0)=v(0)=u\left(\frac{3}{2}\pi\right)=v\left(\frac{3}{2}\pi\right)&=0.
\end{aligned}
\right.
\end{equation}
If we multiply the first equation in \eqref{ODEI} by $v$, the second one by $u$ and subtract them, we obtain for all $0<r\leq \frac{3}{2}\pi$

\begin{align*}
0&=-\int ^r_0{(u^2v^4+u^4v^2)\,dx}-\int^r_0{(vu''-uv'')\,dx}=-\int ^r_0{(u^2v^4+u^4v^2)\,dx}\leq 0,
\end{align*}
and hence all solutions of the restricted equations

\begin{equation*}
\left\{
\begin{aligned}
-u''(x)-u(x)-u(x)^2v(x)^3&=0,\quad x\in[0,r],\\
-v''(x)-v(x)+u(x)^3v(x)^2&=0\\
u(0)=v(0)=u(r)=v(r)&=0.
\end{aligned}
\right.
\end{equation*}
are trivial. However, we see from the linearisation

\begin{equation*}
\left\{
\begin{aligned}
-u''(x)-u(x)&=0,\quad x\in[0,r],\\
-v''(x)-v(x)&=0\\
u(0)=v(0)=u(r)=v(r)&=0
\end{aligned}
\right.
\end{equation*}
that $r=\pi$ is a conjugate radius. Note that this is not a contradiction to Theorem \ref{mainbifII}, since 

\[V(u,v)=\begin{pmatrix}
	-u^2v^3\\
	+u^3v^2
\end{pmatrix}\]
is not a gradient vector field.

\subsubsection*{Example II: A conjugate radius which is not a bifurcation radius}
If we consider instead 

\begin{equation}\label{exampleequ}
\left\{
\begin{aligned}
u''(x)+u(x)+u(x)^2v(x)^3&=0,\quad x\in[0,r],\\
-v''(x)-v(x)+u(x)^3v(x)^2&=0\\
u(0)=v(0)=u(r)=v(r)&=0,
\end{aligned}
\right.
\end{equation} 
for $0<r\leq \frac{3}{2}\pi$, then of course, there are still no non-trivial solutions and $r=\pi$ is the only conjugate radius.\\
The corresponding quadratic forms $h_r$ in Section \ref{section:genMorse} are given by

\[h_r(u,v)=-\int^{\frac{3}{2}\pi}_0{(u'(x))^2dx}+\int^{\frac{3}{2}\pi}_0{(v'(x))^2dx}+r^2\int^{\frac{3}{2}\pi}_0{((u(x))^2-(v(x))^2)\,dx},\]
where $(u,v)\in H^1_0\left(\left[0,\frac{3}{2}\pi\right],\mathbb{R}^2\right)$, and the crossing form at $r=\pi$ is

\[\Gamma(h,\pi)[(u,v)]=2\pi\int^{\frac{3}{2}\pi}_0{((u(x))^2-(v(x))^2)\,dx},\quad (u,v)\in\ker L_\pi.\]
Since the kernel of $L_\pi$ is given by the solutions of the linearisation of \eqref{exampleequ} (cf. Lemma \ref{lemmaconj}), we see that

\[\ker L_\pi=\{(a\sin(\cdot),b\sin(\cdot)):\quad a,b\in\mathbb{R}\}.\]
Consequently, $\Gamma(h,\pi)$ is non-degenerate but $\sgn\Gamma(L,\pi)=0$, which shows that $\sfl(h,[0,\frac{3}{2}\pi])=\sgn\Gamma(L,\pi)=0$ by Proposition \ref{crossformcomsfl}. Note that by Theorem \ref{mainbifI} this is in accordance with our observation that there are no bifurcation radii.


\section*{Appendix}

\appendix

\section{Spectral flow and crossing forms}\label{sect-appsfl}
Let $W$ and $H$ be real Hilbert spaces with a dense injection $\iota:W\hookrightarrow H$. We denote by $\mathcal{L}(W,H)$ the Banach space of all bounded operators, and by $\mathcal{S}(W,H)$ the subset of all elements in $\mathcal{L}(W,H)$ which are selfadjoint when considered as operators on $H$ having dense domain $W$. We let $\mathcal{FS}(W,H)$ be the space of all selfadjoint Fredholm operators, and we recall that an operator in $\mathcal{S}(W,H)$ is Fredholm if and only if its kernel is of finite dimension and its image is closed. In what follows, we abbreviate $\mathcal{FS}(H):=\mathcal{FS}(H,H)$.\\
For a selfadjoint Fredholm operator $T_0\in\mathcal{FS}(W,H)$, there exists $\Lambda>0$ such that $\pm\Lambda$ do not belong to the spectrum 

\[\sigma(T_0)=\{\lambda\in\mathbb{R}:\,\lambda-T_0\,\text{not bijective}\}\] 
of $T_0$ and $\sigma(T_0)\cap[-\Lambda,\Lambda]$ consists only of isolated eigenvalues of finite multiplicity. We set for $-\Lambda\leq c<d\leq\Lambda$

\[E_{[c,d]}(T_0)=\bigoplus_{\lambda\in[c,d]}\ker(\lambda-T_0),\]
and we note that it is readily seen from the continuity of finite sets of eigenvalues (cf. \cite[\S I.II.4]{GohbergClasses}) that there exists a neighbourhood $N(T_0,\Lambda)\subset\mathcal{FS}(W,H)$ of $T_0$ such that $\pm\Lambda\notin\sigma(T)$ and $E_{[-\Lambda,\Lambda]}(T)$ has the same finite dimension for all $T\in N(T_0,\Lambda)$.\\
Let now $\mathcal{A}:[a,b]\rightarrow\mathcal{FS}(W,H)$ be a path of selfadjoint Fredholm operators having invertible endpoints. We choose a subdivision $a=t_0<t_1<\ldots<t_N=b$, operators $T_i\in\mathcal{FS}(W,H)$ and numbers $\Lambda_i>0$, $i=1,\ldots N$, such that the restriction of the path $\mathcal{A}$ to $[t_{i-1},t_i]$ runs entirely inside $N(T_i,\Lambda_i)$. The \textit{spectral flow} of $\mathcal{A}$ is defined by

\begin{align}\label{sfl}
\sfl(\mathcal{A},[a,b])=\sum^N_{i=1}{\dim E_{[0,\Lambda_i]}(\mathcal{A}_{t_i})-\dim E_{[0,\Lambda_i]}(\mathcal{A}_{t_{i-1}})}\in\mathbb{Z}.
\end{align}  
Note that, roughly speaking, $\sfl(\mathcal{A},[a,b])$ is the number of negative eigenvalues of $\mathcal{A}_a$ that
become positive as the parameter $t$ travels from $a$ to $b$ minus the number of positive eigenvalues of $\mathcal{A}_a$ that become negative, i.e., the net number of eigenvalues which cross zero.\\
Let us mention the following properties of the spectral flow, which we use throughout:

\begin{enumerate}
\item[i)] If $\mathcal{A}:[a,b]\rightarrow\mathcal{FS}(W,H)$ is a path and $\mathcal{A}_c$ invertible for some $c\in(a,b)$, then

	\begin{align*}
	\sfl(\mathcal{A},[a,b])=\sfl(\mathcal{A},[a,c])+\sfl(\mathcal{A},[c,b]).
	\end{align*}
\item[ii)] If $\mathcal{A}'$ is defined by $\mathcal{A}'_t=\mathcal{A}_{1-t}$ for some $\mathcal{A}:[a,b]\rightarrow\mathcal{FS}(W,H)$, then

\begin{align*}
\sfl(\mathcal{A}',[a,b])=-\sfl(\mathcal{A},[a,b]).
\end{align*}

\item[iii)] If $\mathcal{A}:[a,b]\rightarrow \mathcal{FS}(W,H)$ is such that $\mathcal{A}_t$ is invertible for all $t\in I$, then $\sfl(\mathcal{A},[a,b])=0$.

\item[iv)] Let $h:[0,1]\times [a,b]\rightarrow\mathcal{FS}(W,H)$ be a continuous map such that $h(s,a)$ and $h(s,b)$ are invertible for all $s\in[0,1]$. Then

\[\sfl(h(0,\cdot),[a,b])=\sfl(h(1,\cdot),[a,b]).\]

\item[v)] If 

\[\mu_{Morse}(\mathcal{A}_t):=\dim\left(\bigoplus_{\lambda<\infty}{\ker(\lambda-\mathcal{A}_t)}\right)<\infty,\quad t\in[a,b],\]
then 
\[\sfl(\mathcal{A},[a,b])=\mu_{Morse}(\mathcal{A}_a)-\mu_{Morse}(\mathcal{A}_b).\]
\end{enumerate}
The spectral flow of a continuously differentiable path $\mathcal{A}:[a,b]\rightarrow\mathcal{FS}(W,H)$ can be computed analytically. Let us denote by $\dot{\mathcal{A}}_{t_0}$ the derivative of $\mathcal{A}$ with respect to the parameter $t\in [a,b]$ at $t_0$. An instant $t_0\in(a,b)$ is called a \textit{crossing} if $\ker\mathcal{A}_{t_0}\neq 0$. The \textit{crossing form} at $t_0$ is the quadratic form defined by

\[\Gamma(\mathcal{A},t_0):\ker\mathcal{A}_{t_0}\rightarrow\mathbb{R},\,\,\Gamma(\mathcal{A},t_0)[u]=\langle\dot{\mathcal{A}}_{t_0}u,u\rangle_H,\]
and $t_0$ is called \textit{regular} if $\Gamma(\mathcal{A},t_0)$ is non-degenerate. The following two theorems can be found in \cite{crossings}, however, let us point out that in all their applications in the current paper, special cases that were proven before in \cite{RobbinSalamon} and \cite{FPR} are sufficient.

\begin{theorem}\label{thm:Sard}
There exists $\varepsilon>0$ such that 
\begin{itemize}
	\item[i)] $\mathcal{A}+\delta\, I_H$ is a path in $\mathcal{FS}(W,H)$ for all $|\delta|<\varepsilon$;
	\item[ii)] $\mathcal{A}+\delta\, I_H$ has only regular crossings for almost every $\delta\in(-\varepsilon,\varepsilon)$.
\end{itemize}
\end{theorem}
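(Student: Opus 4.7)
Part (i) is a soft statement combining openness and compactness. The space $\mathcal{FS}(W,H)$ is open in $\mathcal{S}(W,H)$ by the standard stability of the Fredholm property under small bounded perturbations, and the image $\mathcal{A}([a,b])$ is compact, hence at a positive distance $\rho>0$ from the complement of $\mathcal{FS}(W,H)$. Since $\|\delta\,\iota\|_{\mathcal{L}(W,H)}=|\delta|\,\|\iota\|$, any $\varepsilon<\rho/\|\iota\|$ works.

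For (ii) the plan is to localise in $t$, reduce to a $C^1$ family of symmetric matrices on a fixed finite-dimensional space, and then invoke Sard's theorem. Fix $t_0\in[a,b]$, pick $\Lambda_{t_0}>0$ with $\pm\Lambda_{t_0}\notin\sigma(\mathcal{A}_{t_0})$, and consider the Riesz projector
\[
P_t=\frac{1}{2\pi i}\oint_{|z|=\Lambda_{t_0}}(z-\mathcal{A}_t)^{-1}\,dz,
\]
which, on a neighborhood $U_{t_0}$ of $t_0$, has constant finite rank and depends $C^1$ on $t$. A Kato-type dressing transformation yields a $C^1$ family of orthogonal operators $U_t$ on $H$ satisfying $U_tP_{t_0}=P_tU_t$; the conjugated path $\tilde{\mathcal{A}}_t:=U_t^{-1}\mathcal{A}_tU_t$ preserves the fixed finite-dimensional space $V:=\im P_{t_0}$ and its restriction is a $C^1$ family $\tilde A_t\in\mathcal{S}(V)$. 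Choosing $\varepsilon_{t_0}<\Lambda_{t_0}/2$, for $|\delta|<\varepsilon_{t_0}$ every crossing of $\mathcal{A}+\delta I_H$ in $U_{t_0}$ lies in $V$, and both the kernel and the crossing form agree with those of the reduced family $\tilde A_t+\delta I_V$.

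One is thus reduced to a $C^1$ one-parameter family of symmetric matrices, where Rellich's theorem supplies a $C^1$ choice of eigenvalues $\lambda_1,\ldots,\lambda_m\colon U_{t_0}\to\mathbb{R}$ with an orthonormal $C^1$ eigenvector frame. Differentiating $\tilde A_tv_i(t)=\lambda_i(t)v_i(t)$ and pairing with $v_j(t)$ shows that on $\ker(\tilde A_{t_0}+\delta I_V)$ the crossing form is represented by the diagonal matrix $\diag(\dot\lambda_i(t_0))_{\lambda_i(t_0)=-\delta}$. By Sard's theorem each set of critical values of $\lambda_i$ has Lebesgue measure zero, so the set $B_{t_0}\subset(-\varepsilon_{t_0},\varepsilon_{t_0})$ of $\delta$ for which $-\delta$ is critical for some $\lambda_i$ is null; for $\delta$ outside $B_{t_0}$ every crossing in $U_{t_0}$ is regular. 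Finally, by compactness of $[a,b]$ cover it by finitely many such $U_{t_1},\ldots,U_{t_N}$, set $\varepsilon:=\min_i\varepsilon_{t_i}$ (also smaller than the $\varepsilon$ from part (i)), and note that $B:=\bigcup_iB_{t_i}$ is a null set, which yields (ii).

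The hard part is the finite-dimensional reduction: one must verify that the Kato transport $U_t$ is genuinely $C^1$ in $t$ and that conjugation by a unitary $C^1$ family preserves both kernels and crossing forms, and one must invoke the nontrivial fact that for a $C^1$ one-parameter family of symmetric matrices the eigenvalues admit a $C^1$ selection. Both of these are classical and available in the literature (cf.\ \cite{RobbinSalamon,FPR}); once they are in hand, the Sard step and the compactness patching are routine.
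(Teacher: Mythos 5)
The paper does not prove Theorem~\ref{thm:Sard}; it cites \cite{crossings} (with special cases already in \cite{RobbinSalamon} and \cite{FPR}). So I evaluate your proposal on its own terms.

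Part (i) is fine: openness of $\mathcal{FS}(W,H)$ in $\mathcal{L}(W,H)$, stability of selfadjointness under adding $\delta\,\iota$, and compactness of the image give a uniform $\varepsilon$.

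Part (ii): the overall framework --- localise in $t$ via the Riesz projector on a contour $|z|=\Lambda_{t_0}$, use Kato's transformation to transport $\im P_{t_0}$ to $\im P_t$, reduce to a $C^1$ family of symmetric matrices on a fixed finite-dimensional space $V$, check that kernels and crossing forms are preserved, and finish by Sard plus a compactness covering --- is the right approach and is essentially what the cited references do. The verification that unitary conjugation preserves crossing forms (your ``Kato transport'' step) is a short computation and does go through, since the projector of a selfadjoint operator is orthogonal and $U_t$ is unitary.

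There is, however, a genuine flaw in the finite-dimensional step. You assert that ``Rellich's theorem supplies a $C^1$ choice of eigenvalues $\lambda_1,\dots,\lambda_m$ with an orthonormal $C^1$ eigenvector frame,'' and you then differentiate $\tilde A_t v_i(t)=\lambda_i(t)v_i(t)$. For a $C^1$ one-parameter family of real symmetric matrices, Rellich/Kato do give a $C^1$ choice of eigenvalue branches, but \emph{not} a $C^1$ (or even continuous) eigenvector frame. Kato (Perturbation Theory, Ch.~II, \S 6.3) explicitly records $C^\infty$ non-analytic counterexamples in which the eigenprojections cannot be chosen continuously; $C^1$ eigenvectors require stronger hypotheses (e.g.\ $C^2$ data or analyticity). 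So the identity $\Gamma(\tilde A_\cdot+\delta,t_0)\sim\diag(\dot\lambda_i(t_0))$ cannot be derived by the Feynman--Hellmann computation you write, because the intermediate objects $v_i(t)$ need not exist.

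The conclusion you want is nevertheless true and the argument can be repaired without eigenvector frames: by Kato's reduction process (first-order degenerate perturbation theory), if $K=\ker(\tilde A_{t_0}+\delta I_V)$ and $Q=P_K\dot{\tilde A}_{t_0}P_K|_K$ is the crossing form, then the $C^1$ eigenvalue branches $\lambda_i$ with $\lambda_i(t_0)=-\delta$ have $\dot\lambda_i(t_0)$ equal, with multiplicity, to the eigenvalues of $Q$. This statement concerns only eigenvalues and is valid for $C^1$ families; it yields non-degeneracy of $Q$ precisely when all the $\dot\lambda_i(t_0)$ are nonzero, which is what the Sard step needs. Replace the eigenvector-frame paragraph with this (or, alternatively, first approximate the $C^1$ family by a real-analytic one, where Rellich's full theorem applies, and pass to the limit), and the rest of your proof --- the null sets $B_{t_i}$, the finite cover, the intersection with part (i) --- is in order.
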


The next theorem shows that the spectral flow of $\mathcal{A}$ can be easily computed if all crossings are regular. 

\begin{theorem}\label{thm:crossing}
We assume that the path $\mathcal{A}$ has invertible endpoints. If $\mathcal{A}$ has only regular crossings, then they are finite in number and

\begin{align*}
\sfl(\mathcal{A},[a,b])=\sum_{t\in(a,b)}{\sgn\Gamma(\mathcal{A},t)},
\end{align*}
where $\sgn$ denotes the signature of a quadratic form.
\end{theorem}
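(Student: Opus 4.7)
The plan is to prove the theorem in three stages: first, reduce the local behaviour near each crossing $t_0$ to a family of symmetric operators on the finite-dimensional space $\ker\mathcal{A}_{t_0}$; second, identify the derivative of this finite-dimensional family with the crossing form so as to compute the local contribution to the spectral flow; third, assemble the global identity by additivity.

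To localise, fix a crossing $t_0\in(a,b)$ and choose $\Lambda>0$ such that $\sigma(\mathcal{A}_{t_0})\cap[-\Lambda,\Lambda]=\{0\}$, which is possible because $\mathcal{A}_{t_0}\in\mathcal{FS}(W,H)$ has $0$ as an isolated eigenvalue of finite multiplicity $d=\dim\ker\mathcal{A}_{t_0}$. The continuity of finite parts of the spectrum recalled before \eqref{sfl} guarantees that on a neighbourhood $(t_0-\delta,t_0+\delta)$ the Riesz projection
\[
P_t=-\frac{1}{2\pi i}\oint_{|z|=\Lambda}(z-\mathcal{A}_t)^{-1}\,dz
\]
is a $C^1$-family of orthogonal projections of constant rank $d$, with $\im P_{t_0}=\ker\mathcal{A}_{t_0}$. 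The Kato--Sz.-Nagy transformation
\[
U_t:=\bigl(I-(P_t-P_{t_0})^2\bigr)^{-1/2}\bigl(P_tP_{t_0}+(I-P_t)(I-P_{t_0})\bigr)
\]
then provides a $C^1$-family of orthogonal operators on $H$ with $U_{t_0}=I$ and $U_t(\ker\mathcal{A}_{t_0})=\im P_t$. Define the finite-dimensional family $M_t:=U_t^{-1}\mathcal{A}_tU_t\big|_{\ker\mathcal{A}_{t_0}}$, a $C^1$-path of symmetric operators on the fixed space $\ker\mathcal{A}_{t_0}$ with $M_{t_0}=0$.

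Differentiating $M_t=U_t^{-1}\mathcal{A}_tU_t$ at $t_0$, using $U_{t_0}=I$ together with $\mathcal{A}_{t_0}u=0$ for $u\in\ker\mathcal{A}_{t_0}$, one finds $\dot M_{t_0}u=P_{t_0}\dot{\mathcal{A}}_{t_0}u$, so the quadratic form of $\dot M_{t_0}$ on $\ker\mathcal{A}_{t_0}$ coincides with $\Gamma(\mathcal{A},t_0)$. Regularity of the crossing is therefore equivalent to invertibility of $\dot M_{t_0}$. The classical first-order eigenvalue expansion for $C^1$-paths of symmetric matrices, applied to $M_t$ starting from $M_{t_0}=0$, produces continuous eigenvalue branches $\lambda_1(t),\ldots,\lambda_d(t)$ with $\lambda_i(t_0)=0$ and $\lambda_i'(t_0)$ equal to the eigenvalues of $\dot M_{t_0}$. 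Letting $p$ and $q$ denote the numbers of positive and negative eigenvalues of $\dot M_{t_0}$, so that $p-q=\sgn\Gamma(\mathcal{A},t_0)$, each branch crosses zero transversally. Consequently $\mathcal{A}_t$ is invertible for $0<|t-t_0|<\varepsilon$ (which shows that regular crossings are isolated), and the definition \eqref{sfl} evaluated on $[t_0-\varepsilon,t_0+\varepsilon]$ gives
\[
\sfl(\mathcal{A},[t_0-\varepsilon,t_0+\varepsilon])=\dim E_{[0,\Lambda]}(\mathcal{A}_{t_0+\varepsilon})-\dim E_{[0,\Lambda]}(\mathcal{A}_{t_0-\varepsilon})=p-q=\sgn\Gamma(\mathcal{A},t_0).
\]

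Isolation together with compactness of $[a,b]$ forces the crossings to form a finite set $a<t_1<\cdots<t_N<b$. Interleaving with non-crossings $a=s_0<t_1<s_1<\cdots<t_N<s_N=b$, properties i) and iii) of the spectral flow yield
\[
\sfl(\mathcal{A},[a,b])=\sum_{j=1}^N\sfl(\mathcal{A},[s_{j-1},s_j])=\sum_{j=1}^N\sgn\Gamma(\mathcal{A},t_j),
\]
which is the claimed identity. The main technical obstacle is the smoothness of the spectral projection $P_t$ and of the Kato trivialisation $U_t$, together with the first-order eigenvalue expansion that certifies the local sign count; these are classical perturbation-theoretic facts but must be set up carefully in the present unbounded self-adjoint framework.
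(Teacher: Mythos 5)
Your argument is sound, but note that the paper itself does not prove Theorem \ref{thm:crossing}: it is recalled from \cite{crossings}, with the special cases relevant here attributed to \cite{RobbinSalamon} and \cite{FPR}. Your finite-dimensional reduction via the Riesz projection and the Kato transformation is essentially the classical route taken in those references, so there is no genuine divergence of method, only two points that deserve a more careful set-up than your sketch provides. First, to differentiate $M_t=U_t^{-1}\mathcal{A}_tU_tP_{t_0}$ you need $t\mapsto\mathcal{A}_tP_t$ (equivalently $P_t$ viewed in $\mathcal{L}(H,W)$) to be $C^1$; this follows from $\mathcal{A}_tP_t=-\frac{1}{2\pi i}\oint_{|z|=\Lambda}z\,(z-\mathcal{A}_t)^{-1}\,dz$ and the $C^1$-dependence of the resolvent, and then the terms involving $\mathcal{A}_{t_0}P_{t_0}$ and $P_{t_0}\mathcal{A}_{t_0}$ vanish, giving $\dot M_{t_0}=P_{t_0}\dot{\mathcal{A}}_{t_0}P_{t_0}$, whose quadratic form on $\ker\mathcal{A}_{t_0}$ is indeed $\Gamma(\mathcal{A},t_0)$. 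Second, the appeal to differentiable eigenvalue branches can be avoided: since $M_{t_0}=0$, Taylor's formula gives $M_t=(t-t_0)N_t$ with $N_t\to\dot M_{t_0}$ as $t\to t_0$, so at a regular crossing $M_t$ is invertible with $p$ (resp. $q$) positive eigenvalues for $t$ slightly above (resp. below) $t_0$; combined with $\ker\mathcal{A}_t\subset E_{[-\Lambda,\Lambda]}(\mathcal{A}_t)=\im P_t$ and the fact that selfadjoint Fredholm operators have index zero, this yields both the isolation of crossings and the local count $\dim E_{[0,\Lambda]}(\mathcal{A}_{t_0+\varepsilon})-\dim E_{[0,\Lambda]}(\mathcal{A}_{t_0-\varepsilon})=p-q=\sgn\Gamma(\mathcal{A},t_0)$ directly from \eqref{sfl}. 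With these points made precise, your concatenation of the local contributions via properties i) and iii) completes the proof as in the cited sources.
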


Finally, let us recall from \cite{AtiyahSinger} the deep result that the space $\mathcal{FS}(H)$ of bounded selfadjoint Fredholm operators consists of three connected components

\[\mathcal{FS}(H)=\mathcal{FS}^+(H)\cup\mathcal{FS}^i(H)\cup\mathcal{FS}^-(H),\]
where

\[\mathcal{FS}^\pm(H):=\{T\in\mathcal{FS}(H):\,\mu_{Morse}(\pm T)<\infty\}\]
are contractible, and $\mathcal{FS}^i(H)$ is a classifying space for the functor $KO^{-7}$.
  

\section{The Maslov index in symplectic Hilbert spaces}\label{App:Maslov}

In this section we recall some facts about the {\em Fredholm Lagrangian Grassmannian\/} of a symplectic Hilbert space and the \textit{Maslov index}, where our basic reference is Furutani's work \cite{Furutani}.\\ 
Let $H$ be a real separable Hilbert space equipped with a symplectic form, that is, a skew-symmetric and non-degenerate bounded bilinear form $\omega$. Note that by definition, $\omega$ is non-degenerate if the canonical map $H\rightarrow H^\ast$, $u\mapsto \omega(\cdot,u)$ is bijective. For a subspace $\mu\subset H$, we use throughout the notation

\[\mu^\circ=\{u\in H:\,\omega(u,v)=0\,\,\text{for all}\,\, v\in \mu\}.\]

\begin{defi}\label{def:isoLagraangiansub}
A subspace $\mu$ of the symplectic Hilbert space $(H,\omega)$ is called {\em isotropic\/} if 
$\mu\subset\mu^\circ$, i.e., $\omega(u,v)=0$ for all $u,v \in \mu$.  If $\mu=\mu^\circ$, then $\mu$ is called {\em Lagrangian\/}.
\end{defi}

The set $\Lambda(H)$ of all {\em Lagrangian subspaces\/} of $H$ is a Banach manifold which is called the {\em Lagrangian Grassmannian\/} (cf. \cite[\S 1]{Nicolaescu}).\\
For $\dim H<\infty$ and a fixed Lagrangian subspace $\mu\in\Lambda(H)$, the Maslov index $\mu_{Mas}(\ell,\mu,[a,b])$ of a path $\ell:[a,b]\rightarrow\Lambda(H)$ was introduced in \cite{Arnold}, and heuristically, it counts non-transversal intersections of $\ell$ and $\mu$. Let us note for later reference the following well known example: Let $H=\mathbb{R}^{2k}$ and $\omega_0(x,y)=\langle\sigma u,v\rangle$, where $\sigma=\begin{pmatrix} 0&-I_k\\I_k&0\end{pmatrix}$ is the standard symplectic matrix and $\langle\cdot,\cdot\rangle$ denotes the standard scalar product in $\mathbb{R}^{2k}$. The product $H\times H$ is a symplectic space with respect to $\omega_2:=\omega_0\times(-\omega_0)$, and if $\ell:[a,b]\rightarrow\Sp(2k)$ is a path of symplectic matrices, then 

\[\gra\ell(t)=\{(u,\ell(t)u):u\in\mathbb{R}^{2k}\}\subset\mathbb{R}^{2k}\times\mathbb{R}^{2k},\quad t\in[a,b],\]
is a path of Lagrangian subspaces of $H\times H$. Let now $\mu,\mu'\in\Lambda(H)$ be two Lagrangian subspaces of $\mathbb{R}^{2k}$. Then $\mu\times\mu'\in\Lambda(H\times H)$ and $\mu_{Mas}(\gra\ell(\cdot),\mu\times\mu',[a,b])$ is defined. On the other hand, $\ell(t)\mu$, $t\in[a,b]$, is a path of Lagrangian subspaces in $H$ and so $\mu_{Mas}(\ell(\cdot)\mu,\mu',[a,b])$ exists as well. Clearly, $\gra\ell(\cdot)$ intersects $\mu\times\mu'$ non-transversally if and only if $\ell(\cdot)\mu$ intersects $\mu'$ non-transversally. Moreover, it turns out that also the corresponding Maslov indices coincide (cf. \cite[Thm. 3.2]{Robbin-SalamonMAS}):

\begin{align}\label{RobSalgra}
\mu_{Mas}(\gra\ell(\cdot),\mu\times\mu',[a,b])=\mu_{Mas}(\ell(\cdot)\mu,\mu',[a,b]).
\end{align}
One of the most important properties of the Maslov index $\mu_{Mas}(\ell,\mu,[a,b])$ is its invariance under homotopies having endpoints which are transversal to $\mu$. In contrast, it can be shown from Kuiper's theorem \cite{Kuiper} that $\Lambda(H)$ is a contractible space if $H$ is an infinite dimensional symplectic Hilbert space (cf. e.g. \cite[Prop. 1.1]{Nicolaescu}) and so no non-trivial homotopy invariant for paths in $\Lambda(H)$ can exist in this case. 

\begin{defi}\label{def:Fredholmpair}
Given two closed subspaces $\mu, \eta$ of $H$, the pair $(\mu, \eta)$ is called a {\em Fredholm pair\/} if 
\begin{align}\label{propertyFredholmpair}
\dim(\mu \cap \eta)<+\infty\quad\text{and}\quad \codim(\mu+\eta)<+\infty.
\end{align}
\end{defi}
Note that many authors require in the definition of a Fredholm pair also the sum $\mu+\eta\subset H$ to be closed, however, it is not hard to show that this property already follows from \eqref{propertyFredholmpair} (cf. \cite[\S IV.4.1]{Kato}).

\begin{defi}\label{def:FredLagGras}
The {\em Fredholm Lagrangian Grassmannian\/} with respect to the Lagrangian subspace $\mu \in \Lambda(H)$ is defined as 
\[
\FredL_\mu(H)=\{\eta \in \Lambda(H):\,(\mu, \eta)  \ \textrm{is a Fredholm pair\/}\},
\]
and the subset 
\[
\mathcal M_\mu(H)=\{\eta \in \FredL_\mu(H):\,\eta \cap \mu \neq \{0\}\},
\]
is called the {\em Maslov cycle\/} of $\mu$.
\end{defi}
Clearly, $\FredL_\mu(H)=\Lambda(H)$ if $\dim H<\infty$. Let $(H,\omega)$ be a symplectic Hilbert space and let $\mu\in\Lambda(H)$ be a fixed Lagrangian subspace. The construction of the Maslov index for paths $\ell:[a,b]\rightarrow\FredL_\mu(H)$ having endpoints outside of $\mathcal{M}_\mu(H)$ consists of two steps, which can be roughly described as follows (cf. \cite[\S 3.1]{Furutani}): First, a transformation of elements $\mu'\in\FredL_\mu(H)$ to unitary operators $\mathcal{U}(\mu')$ on a suitable Hilbert space is constructed, such that $\mu'\cap\mu\neq\{0\}$ if and only if the unitary operator $\mathcal{U}(\mu')$ corresponding to $\mu'$ has $-1$ in its spectrum. Second, one builds a spectral flow through $-1$ on the set of all paths in the image $\mathcal{U}(\FredL_\mu(H))$ whose endpoints do not have $-1$ in their spectra. The composition assigns an integer $\mu_{Mas}(\ell,\mu,[a,b])$ to every path $\ell:[a,b]\rightarrow\FredL_\mu(H)$ such that $\ell(a)\cap\mu=\ell(b)\cap\mu=\{0\}$, which has the following properties:

\begin{enumerate}
\item[i)] if $\ell(t) \notin \mathcal M_\mu(H)$ for each $t \in [a,b]$, then 

\[\mu_{Mas}(\ell,\mu, [a,b] )=0;\]

\item[ii)] if $\ell:[a,b] \to \FredL_\mu(H)$ is a continuous curve such that $\ell_c\notin\mathcal{M}_\mu(H)$ for some $c\in(a,b)$, then  
\[
\mu_{Mas}(\ell, \mu, [a,b])= \mu_{Mas}(\ell, \mu, [a,c]) +\mu_{Mas}(\ell,\mu, [c,b]);
\]
\item[iii)] if $H:[0,1]\times [a,b]\rightarrow\FredL_\mu(H)$ is continuous and $H(s,a),H(s,b)\notin\mathcal M_\mu(H)$ for all $s\in[0,1]$, then 

\[\mu_{Mas}(H(0,\cdot),\mu,[a,b])=\mu_{Mas}(H(1,\cdot),\mu,[a,b]).\]
\end{enumerate}

Finally, we recall from \cite[\S 3.4]{Furutani} the computation of the Maslov index by crossing forms. Let $\ell:[a,b] \to \FredL_{\mu}(H)$ be a $C^1$ path. We say that $t^\ast \in [a,b]$  is a {\em crossing instant \/} for the curve $\ell$, if $\ell(t^\ast) \in \mathcal M_\mu(H)$. If $\mu'$ is a Lagrangian subspace which is transversal to $\ell(t^\ast)$ at some crossing instant $t^\ast$, e.g. $\mu'=\ell(t^\ast)^\perp$, then there exists $\varepsilon>0$ such that $\ell(t)$ is transversal to $\mu'$ for each $|t-t^\ast|<\varepsilon$. Therefore, we can find a $C^1$-family of bounded operators $\phi_t: \gamma(t^\ast) \to \mu'$ such that
 
\[
\ell(t)=\gra(\phi_t),\quad t\in(t^\ast-\varepsilon,t^\ast+\varepsilon).
\]
The {\em crossing form\/} $\Gamma(\ell,\mu;t^\ast)$ at the instant $t=t^\ast$ is the quadratic form on $\gamma(t^\ast)\cap \mu$, defined by 
\[
\Gamma(\ell,\mu;t^\ast)[u]:= \dfrac{d}{dt}\Big\vert_{t=t^\ast} \omega (u, \phi_t(u)), \qquad u\in \gamma(t^\ast)\cap \mu.
\]
It can be shown that $\Gamma(\ell,\mu;t^\ast)$ does not depend on the choice of $\mu'$. A crossing $t^\ast \in (a,b)$ will be called {\em regular\/} if $\Gamma(\ell,\mu;t^\ast)$ is non-degenerate. It is easy to see that regular crossings are isolated and hence they are finite in number by the compactness of $[a,b]$. 

\begin{prop}\label{thm:localcontribution}
Let $\ell: [a, b] \to \FredL_\mu(H)$ be a $C^1$ path having endpoints outside $\mathcal{M}_\mu(H)$. If $\ell$ has only regular crossings, then

\[\mu_{Mas}(\ell,\mu, [a,b])= \sum_{t^\ast \in(a,b)} \sgn\Gamma(\ell,\mu;t^\ast),\]
where $\sgn$ denotes the signature. 
\end{prop}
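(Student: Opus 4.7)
The plan is to prove the formula by combining three ingredients: isolation of regular crossings, additivity of the Maslov index to reduce to a single crossing, and a local computation that identifies the local contribution with $\sgn\Gamma(\ell,\mu;t^\ast)$.

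First I would verify isolation. Near a regular crossing $t^\ast$, write $\ell(t)=\gra(\phi_t)$ with $\phi_t\colon\ell(t^\ast)\to\mu'$ as in the definition. An element $u\in\ell(t^\ast)$ yields a vector $u+\phi_t(u)\in\ell(t)\cap\mu$ iff this vector lies in $\mu$. Since $(\ell(t^\ast),\mu)$ is a Fredholm pair, after decomposing $\ell(t^\ast)=V\oplus V^\perp$ with $V=\ell(t^\ast)\cap\mu$ finite-dimensional, the condition reduces on the finite-dimensional factor to a $C^1$ equation whose linearization at $t^\ast$ is governed by $\Gamma(\ell,\mu;t^\ast)$; its non-degeneracy combined with the implicit function theorem forces $\ell(t)\cap\mu=\{0\}$ for $0<|t-t^\ast|<\varepsilon$. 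By compactness of $[a,b]$ the set of regular crossings is finite.

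Next, using the additivity property ii) of the Maslov index I choose $a=s_0<s_1<\cdots<s_N=b$ with each $s_i$ outside the Maslov cycle and each subinterval $[s_{i-1},s_i]$ containing at most one crossing. Subintervals with no crossings contribute zero by property i), so the total index equals the sum of the single-crossing contributions, and it suffices to prove
\[
\mu_{Mas}(\ell,\mu,[t^\ast-\varepsilon,t^\ast+\varepsilon])=\sgn\Gamma(\ell,\mu;t^\ast)
\]
for each isolated regular crossing.

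For the local computation I would unfold the definition of the Maslov index as a spectral flow. Choose a compatible complex structure $J$ on $H$ making $\omega(\cdot,J\cdot)$ a real inner product; the associated Cayley-type map $\mathcal{U}\colon\Lambda(H)\to U(H_J)$ (from \cite{Furutani}) sends $\eta\mapsto U_\eta$, with $\eta\cap\mu\neq\{0\}$ iff $-1\in\sigma(U_\eta U_\mu^{-1})$, and $\mu_{Mas}(\ell,\mu,[a,b])$ is by construction the spectral flow of $t\mapsto U_{\ell(t)}U_\mu^{-1}$ through $-1$. A second Cayley transform sending $-1\mapsto 0$ converts this to a $C^1$ path $\mathcal{A}\colon[t^\ast-\varepsilon,t^\ast+\varepsilon]\to\mathcal{FS}(H_J)$ of bounded selfadjoint Fredholm operators whose kernel at $t^\ast$ is canonically identified with $\ell(t^\ast)\cap\mu$. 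The key calculation — carried out using the graph representation $\ell(t)=\gra(\phi_t)$ — is that under these identifications the selfadjoint crossing form $\langle\dot{\mathcal{A}}_{t^\ast}u,u\rangle$ agrees, up to a strictly positive factor, with $\tfrac{d}{dt}\big|_{t^\ast}\omega(u,\phi_t u)=\Gamma(\ell,\mu;t^\ast)[u]$. Hence $\mathcal{A}$ has a regular crossing at $t^\ast$ of the same signature, and Theorem \ref{thm:crossing} gives
\[
\mu_{Mas}(\ell,\mu,[t^\ast-\varepsilon,t^\ast+\varepsilon])=\sfl(\mathcal{A},[t^\ast-\varepsilon,t^\ast+\varepsilon])=\sgn\Gamma(\mathcal{A},t^\ast)=\sgn\Gamma(\ell,\mu;t^\ast).
\]

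The main obstacle is the computation identifying the two crossing forms. One must track the derivatives through the two successive Cayley transforms and the graph parametrization, and verify that the positive scalar relating them is independent of $u$ on the finite-dimensional kernel. Done correctly, this is bookkeeping with symplectic identities, but the sign conventions have to be watched carefully; the fact that $\Gamma(\ell,\mu;t^\ast)$ is intrinsic (independent of the choice of transversal Lagrangian $\mu'$) is what allows one to choose $\mu'=J\ell(t^\ast)$, which aligns the graph picture with the complex structure and streamlines the calculation.
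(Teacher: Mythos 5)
The paper does not actually prove this proposition: it is recalled verbatim from Furutani's survey \cite[\S 3.4]{Furutani} (its finite-dimensional ancestor is the crossing-form formula of \cite{Robbin-SalamonMAS}), so the only meaningful comparison is with that source. Your architecture is exactly the one used there: show regular crossings are isolated, use properties i) and ii) to split $[a,b]$ into subintervals containing at most one crossing each, and compute the local contribution by unwinding the definition of $\mu_{Mas}$ as a spectral flow through $-1$ of the associated unitaries, converting to a selfadjoint path, and invoking the crossing-form formula for the spectral flow (Theorem \ref{thm:crossing}). That reduction, together with your use of the independence of $\Gamma(\ell,\mu;t^\ast)$ from the transversal Lagrangian $\mu'$ to choose $\mu'=J\ell(t^\ast)$, is sound and is how the cited proof proceeds.

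Two points keep this from being a complete argument. First, the ``second Cayley transform sending $-1\mapsto 0$'' does not produce a globally defined bounded selfadjoint Fredholm path on $H_J$: the point $1$ may lie in the spectrum of $U_{\ell(t)}U_\mu^{-1}$, since the Fredholm-pair condition only excludes $-1$ from the essential spectrum. The repair is the localization already implicit in the definition of spectral flow through $-1$: for $t$ near $t^\ast$ a fixed small arc around $-1$ carries only discrete spectrum of constant finite total multiplicity, and one applies the Cayley transform (and then the finite-dimensional case of Theorem \ref{thm:crossing}) on the corresponding spectral subspaces. Second, and more substantially, the identity $\langle\dot{\mathcal{A}}_{t^\ast}u,u\rangle = c\,\Gamma(\ell,\mu;t^\ast)[u]$ with $c>0$ under the identification of $\ker\mathcal{A}_{t^\ast}$ with $\ell(t^\ast)\cap\mu$ is precisely the nontrivial content of the proposition --- it is what \cite[\S 3.4]{Furutani} actually establishes --- and in your write-up it is flagged as ``bookkeeping with symplectic identities'' rather than carried out. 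As it stands, your proof reduces the proposition to this unproved identity; everything around it is correct, but the one computation that distinguishes this statement from its own restatement still has to be done (or imported with a precise citation).
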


We have not described the smooth structure on $\FredL_\mu(H)$ in this appendix for which we refer in particular to \cite[\S 2]{AbbMaj}. The following lemma is often useful for applying the previous proposition. Note that if $\ell:[a,b]\rightarrow\FredL_\mu(H)$ is a path, then there exists for every $\lambda\in[a,b]$ a unique orthogonal projection $P_\lambda\in\mathcal{L}(H)$ such that $\im P_\lambda=\ell(\lambda)$. 

\begin{lemma}\label{regcurve}
The path $\ell:[a,b]\rightarrow\FredL_\mu(H)$ is $C^l$, $l\in\{0,1,\ldots,\infty\}$, if and only if the associated path $P:[a,b]\rightarrow\mathcal{L}(H)$ of orthogonal projections is $C^l$.
\end{lemma}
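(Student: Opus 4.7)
The plan is to work locally near any parameter value $\lambda_0\in[a,b]$ and establish the equivalence by explicit formulas relating the graphing operators of the Banach manifold structure on $\FredL_\mu(H)$ to the orthogonal projections in $\mathcal{L}(H)$. Set $\ell_0:=\ell(\lambda_0)$ and decompose $H=\ell_0\oplus\ell_0^\perp$, with $P_0$ the orthogonal projection onto $\ell_0$. By the standard description of the smooth structure on $\FredL_\mu(H)$ (which is open in $\Lambda(H)$, itself a smoothly embedded submanifold of the Grassmannian of closed subspaces of $H$), closed subspaces $\ell$ near $\ell_0$ are exactly the graphs $\gra(\phi)=\{x+\phi(x):x\in\ell_0\}$ of bounded operators $\phi\in\mathcal{L}(\ell_0,\ell_0^\perp)$, and $\phi\mapsto\gra(\phi)$ is a chart at $\ell_0$; hence $\lambda\mapsto\ell(\lambda)$ is $C^l$ at $\lambda_0$ if and only if the corresponding path of graphing operators $\lambda\mapsto\phi_\lambda$ is $C^l$.

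A direct variational computation, minimising $\|z-(x+\phi(x))\|^2$ in $x\in\ell_0$ for fixed $z\in H$, yields the block-matrix formula
\[P_{\gra(\phi)}=\begin{pmatrix}(I+\phi^*\phi)^{-1}&(I+\phi^*\phi)^{-1}\phi^*\\ \phi(I+\phi^*\phi)^{-1}&\phi(I+\phi^*\phi)^{-1}\phi^*\end{pmatrix}\]
with respect to $H=\ell_0\oplus\ell_0^\perp$. Since $I+\phi^*\phi$ is invertible for every $\phi$ and inversion is smooth on $\GL(\ell_0)$, this expresses $P_\ell$ as a $C^\infty$ function of $\phi$, so $\phi_\lambda\in C^l$ forces $P_{\ell(\lambda)}\in C^l$. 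Conversely, the $(1,1)$- and $(2,1)$-blocks of $P_{\gra(\phi)}$ equal $(I+\phi^*\phi)^{-1}$ and $\phi(I+\phi^*\phi)^{-1}$, the first of which is invertible, so
\[\phi=(I-P_0)\,P_\ell|_{\ell_0}\circ\bigl(P_0P_\ell|_{\ell_0}\bigr)^{-1},\]
which displays $\phi$ as a smooth function of $P_\ell$ on the open set where $P_0P_\ell|_{\ell_0}\in\GL(\ell_0)$, that is, precisely where $\ell$ is a graph over $\ell_0$. Consequently $P_{\ell(\lambda)}\in C^l$ forces $\phi_\lambda\in C^l$, and combined with the chart description of the first paragraph this proves the lemma.

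I expect the only real technicality is verifying that the ambient graphing chart on the Grassmannian restricts consistently to the submanifold $\Lambda(H)$ and hence to its open subset $\FredL_\mu(H)$: this amounts to checking that the Lagrangian condition cuts out a closed linear subspace in $\mathcal{L}(\ell_0,\ell_0^\perp)$ (the subspace of $\phi$ whose graph is symplectically self-orthogonal), which is routine and done in \cite[\S 2]{AbbMaj}. No accumulation or Fredholm subtlety enters, since the argument is purely local in $\lambda$ and independent of the Fredholm-pair condition with $\mu$.
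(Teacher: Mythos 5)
Your proposal is correct, and it in fact supplies more than the paper does: the paper states Lemma \ref{regcurve} without proof, deferring the description of the smooth structure on $\FredL_\mu(H)$ to \cite[\S 2]{AbbMaj}, where the Grassmannian is charted exactly by the graph maps you use and is identified with the set of orthogonal projections in $\mathcal{L}(H)$. Your two formulas are the standard way to verify that these descriptions induce the same $C^l$-regularity: the block formula for $P_{\gra(\phi)}$ is correct and exhibits $P$ as a smooth function of $\phi$, and the inverse formula $\phi=(I-P_0)P_\ell|_{\ell_0}\circ(P_0P_\ell|_{\ell_0})^{-1}$ recovers $\phi$ smoothly from $P_\ell$. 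Two small points worth making explicit: in the converse direction one should note that continuity of $P$ (contained in $C^l$ for every $l\geq 0$) forces $\|P_{\ell(\lambda)}-P_{\ell_0}\|<1$ near $\lambda_0$, so that $\ell(\lambda)$ really is the graph of some bounded operator over $\ell_0$ and your formula returns that graphing operator rather than merely a well-defined expression; and the equivalence ``$P_0P_\ell|_{\ell_0}\in\GL(\ell_0)$ iff $\ell$ is a graph over $\ell_0$'' uses that both subspaces are Lagrangian for a compatible complex structure (so that $\ell\cap\ell_0^\perp$ and $\ell^\perp\cap\ell_0$ are exchanged by the complex structure), exactly the technicality you flag and which \cite[\S 2]{AbbMaj} handles. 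With these remarks your argument is complete and consistent with the reference the paper relies on.
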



\thebibliography{99}

\bibitem{AbbMaj} A. Abbondandolo, P. Majer, \textbf{Infinite dimensional Grassmannians}, J. Operator Theory \textbf{61}, 2009, 19--62

\bibitem{Ambrosetti} A. Ambrosetti, G. Prodi, \textbf{A Primer of Nonlinear Analysis}, Cambridge studies in advanced mathematics \textbf{34}, Cambridge University Press, 1993

\bibitem{Arnold} V.I. Arnol'd, \textbf{On a characteristic class entering into conditions of quantization}, Funkcional. Anal. i Prilo\u{z}en {\bf 1}, 1967, 1--14

\bibitem{AtiyahSinger} M.F. Atiyah, I.M. Singer, \textbf{Index Theory for Skew-Adjoint Fredholm Operators}, Inst. Hautes Etudes Sci. Publ. Math. \textbf{37}, 1969, 5--26



\bibitem{BoossFurutani} B. Booss-Bavnbek, K. Furutani, \textbf{The Maslov index: a functional analytical definition and the spectral flow formula}, Tokyo J. Math. \textbf{21},  1998, 1--34

\bibitem{BoossBuch} B. Booss-Bavnbek, K. Wojciechowski, \textbf{Elliptic boundary problems for Dirac operators}, Mathematics: Theory \& Applications, Birkh\"auser Boston, Inc., Boston, MA,  1993

\bibitem{Calderon} A.P. Calderon, \textbf{Uniqueness in the Cauchy problem for partial differential equations}, Amer. J. Math.  \textbf{80}, 1958, 16--36

\bibitem{AleDalbono} F. Dalbono, A. Portaluri, \textbf{Morse-Smale index theorems for elliptic boundary deformation problems}, J. Differential Equations  \textbf{253}, 2012, 463--480

\bibitem{DengJones} J. Deng, C.K.R.T. Jones, \textbf{Multi-dimensional Morse index theorems and a symplectic
view of elliptic boundary value problems}, Trans. Amer. Math. Soc. {\bf 363}, 2011, 1487--1508

\bibitem{DunfordSchwarz} N. Dunford, J.T. Schwartz, \textbf{Linear operators. Part II. Spectral theory. Selfadjoint operators in Hilbert space.}, Wiley Classics Library, A Wiley-Interscience Publication, John Wiley \& Sons, Inc., New York, 1988

\bibitem{Duistermaat} J.J. Duistermaat, \textbf{On the Morse Index in Variational Calculus}, Adv. Math. \textbf{21}, 1976, 173--195

\bibitem{FPR}  P.M. Fitzpatrick, J. Pejsachowicz, L. Recht, \textbf{Spectral Flow and Bifurcation of Critical Points of Strongly-Indefinite Functionals-Part I: General Theory}, J. Funct. Anal. \textbf{162}, 1999, 52--95

\bibitem{Frey} C. Frey, \textbf{On Non-local Boundary Value Problems for Elliptic Operators}, Dissertation, Universität zu Köln, 2005, http://kups.ub.uni-koeln.de/id/eprint/1512

\bibitem{Furutani} K. Furutani, \textbf{Fredholm Lagrangian Grassmannian and the Maslov index}, J. Geom. Phys. {\bf 51}, 2004, 269--331

\bibitem{GohbergClasses} I. Gohberg, S. Goldberg, M.A. Kaashoek, \textbf{Classes of linear operators. Vol. I}, Operator Theory: Advances and Applications \textbf{49}, Birkhäuser Verlag, Basel, 1990

\bibitem{Grubb} G. Grubb, \textbf{On coerciveness and semiboundedness of general boundary problems}, Israel J. Math. \textbf{10}, 1971, 32--95

\bibitem{Helfer} A.D. Helfer, \textbf{Conjugate Points on Spacelike Geodesics or Pseudo-Selfadjoint Morse-Sturm-Liouville Systems}, Pacific J. Math. \textbf{164}, 1994, 321--340

\bibitem{Hormander} L. Hörmander, \textbf{Linear partial differential operators}, Third revised printing, Die Grundlehren der mathematischen Wissenschaften, Band 116, Springer-Verlag New York Inc., New York, 1969

\bibitem{Kato} T. Kato, \textbf{Perturbation theory for linear operators}, Second edition, Grundlehren der Mathematischen Wissenschaften \textbf{132}, Springer-Verlag, Berlin-New York, 1976

\bibitem{Klingenberg} W. Klingenberg, \textbf{Riemannian Geometry}, de Gruyter, 1995

\bibitem{Kuiper} N.H. Kuiper, \textbf{The homotopy type of the unitary group of Hilbert space}, Topology {\bf 3}, 1965, 19--30


\bibitem{MilnorMorse} J.W. Milnor, \textbf{Morse Theory}, Princeton Univ. Press, 1969

\bibitem{MorseTrans} M. Morse, \textbf{The foundations of the calculus of variations in $m$-space (Part I)}, Trans. Amer. Math. Soc. \textbf{31}, 1929, 379--404

\bibitem{Morse} M. Morse, \textbf{The calculus of variations in the large}, Amer. Math. Soc. Colloq. Publ. \textbf{18}, 1934

\bibitem{MPP} M. Musso, J. Pejsachowicz, A. Portaluri, \textbf{A Morse Index Theorem for Perturbed Geodesics on Semi-Riemannian Manifolds}, Topol. Methods Nonlinear Anal. \textbf{25}, 2005, 69--99

\bibitem{Nicolaescu} L.I. Nicolaescu, \textbf{The Maslov index, the spectral flow, and decompositions of manifolds}, Duke Math. J. {\bf 80}, 1995, 485--533



\bibitem{JacoboIch} J. Pejsachowicz, N. Waterstraat, \textbf{Bifurcation of critical points for continuous families of $C^2$ functionals of Fredholm type}, J. Fixed Point Theory Appl. \textbf{13},  2013, 537--560, arXiv:1307.1043 [math.FA]

\bibitem{PalaisTerng} R.S. Palais, C-l Terng, \textbf{Critical Point Theory and Submanifold Geometry}, Springer-Verlag, 1988

\bibitem{AleIchDomain} A. Portaluri, N. Waterstraat, \textbf{On bifurcation for semilinear elliptic Dirichlet problems and the Morse-Smale index theorem}, J. Math. Anal. Appl. \textbf{408}, 2013, 572--575, arXiv:1301.1458 [math.AP]

\bibitem{AleIchBall} A. Portaluri, N. Waterstraat, \textbf{On bifurcation for semilinear elliptic Dirichlet problems on geodesic balls}, J. Math. Anal. Appl. \textbf{415}, 2014, 240--246, arXiv:1305.3078 [math.AP]

\bibitem{PiccioneMITinSRG} P. Piccione, D.V. Tausk, \textbf{The Morse Index Theorem in Semi-Riemannian Geometry}, Topology \textbf{41}, 2002, 1123-1159, arXiv:math/0011090

\bibitem{Robbin-SalamonMAS} J. Robbin, D. Salamon, \textbf{The Maslov index for paths}, Topology \textbf{32}, 1993, 827--844

\bibitem{RobbinSalamon} J. Robbin, D. Salamon, \textbf{The spectral flow and the {M}aslov index}, Bull. London Math. Soc. {\bf 27}, 1995, 1--33



\bibitem{Smale} S. Smale, \textbf{On the {M}orse index theorem}, J. Math. Mech. \textbf{14}, 1965, 1049--1055

\bibitem{SmaleCorr} S. Smale, \textbf{Corrigendum: ``{O}n the {M}orse index theorem''}, J. Math. Mech. \textbf{16}, 1967, 1069--1070

\bibitem{Swansona} R.C. Swanson, \textbf{Fredholm intersection theory and elliptic boundary deformation problems I}, J. Differential Equations \textbf{28}, 1978, 189--201

\bibitem{Swansonb} R.C. Swanson, \textbf{Fredholm intersection theory and elliptic boundary deformation problems II}, J. Differential Equations \textbf{28}, 1978, 202--219

\bibitem{Uhlenbeck} K. Uhlenbeck, \textbf{The Morse index theorem in Hilbert space}, J. Differential Geometry \textbf{8}, 1973, 555--564

\bibitem{Wa} N. Waterstraat, \textbf{A K-theoretic proof of the Morse index theorem in semi-Riemannian Geometry}, Proc. Amer. Math. Soc. \textbf{140}, 2012, 337--349

\bibitem{WaterstraatSurvey} N. Waterstraat, \textbf{On bifurcation for semilinear elliptic Dirichlet problems on shrinking domains}, Elliptic and Parabolic Equations (J. Escher, J. Seiler, C. Walker (Eds.)), Springer Proc. Math. Stat. \textbf{119}, 2015, 279--298, arXiv:1403.4151 [math.AP]

\bibitem{crossings} N. Waterstraat, \textbf{Spectral flow, crossing forms and homoclinics of Hamiltonian systems}, submitted, 35 pp., arXiv:1406.3760 [math.DS]

\bibitem{Weidmann} J. Weidmann, \textbf{Linear Operators in Hilbert Spaces}, Graduate Texts in Mathematics \textbf{68}, Springer-Verlag, 1980

\bibitem{Zeidler} E. Zeidler, \textbf{Nonlinear functional analysis and its applications. II/A. Linear monotone operators}, Springer-Verlag, 1990

\vspace{1cm}
Alessandro Portaluri\\
Department of Agriculture, Forest and Food Sciences\\
Universit\`a degli studi di Torino\\
Largo Paolo Braccini, 2\\
10095 Grugliasco (TO)\\
Italy\\
E-mail: alessandro.portaluri@unito.it

\vspace{1cm}
Nils Waterstraat\\
Institut für Mathematik\\
Humboldt-Universität zu Berlin\\
Unter den Linden 6\\
10099 Berlin\\
Germany\\
E-mail: waterstn@math.hu-berlin.de

\end{document}